\theoremstyle{plain}
\newtheorem{theorem}{Theorem}[section]
\newaliascnt{lemma}{theorem}
\newtheorem{lemma}[lemma]{Lemma}
\newaliascnt{proposition}{theorem}
\newtheorem{proposition}[proposition]{Proposition}
\newaliascnt{corollary}{theorem}
\newtheorem{corollary}[corollary]{Corollary}
\theoremstyle{definition}
\newaliascnt{definition}{theorem}
\newtheorem{definition}[definition]{Definition}
\newaliascnt{example}{theorem}
\newtheorem{example}[example]{Example}
\newaliascnt{assumption}{theorem}
\theoremstyle{remark}
\newaliascnt{remark}{theorem}
\newtheorem{remark}[remark]{Remark}
\newaliascnt{notation}{theorem}
\newtheorem{notation}[notation]{Notation}
\begin{document}
\author{Robert I.~McLachlan}
\address{IFS, Massey University\\
  Palmerston North, New Zealand 5301}
\email{r.mclachlan@massey.ac.nz}

\author{Ari Stern}
\address{Department of Mathematics\\
Washington University in St.~Louis\\
Campus Box 1146\\
One Brookings Drive\\
Saint Louis, Missouri 63130}
\email{stern@wustl.edu}

\title[Multisymplecticity of HDG methods]{Multisymplecticity of hybridizable discontinuous Galerkin methods}

\begin{abstract}
  In this paper, we prove necessary and sufficient conditions for a
  hybridizable discontinuous Galerkin (HDG) method to satisfy a
  multisymplectic conservation law, when applied to a
  canonical Hamiltonian system of partial differential equations. We
  show that these conditions are satisfied by the ``hybridized''
  versions of several of the most commonly-used finite element
  methods, including mixed, nonconforming, and discontinuous Galerkin
  methods. (Interestingly, for the continuous Galerkin method in
  dimension greater than one, we show that multisymplecticity only
  holds in a weaker sense.)  Consequently, these general-purpose
  finite element methods may be used for structure-preserving
  discretization (or semidiscretization) of canonical Hamiltonian
  systems of ODEs or PDEs. This establishes multisymplecticity for a
  large class of arbitrarily-high-order methods on unstructured
  meshes.
\end{abstract}

\subjclass[2010]{65N30, 37K05}

\maketitle

\section{Introduction}
\label{sec:intro}

\subsection{Motivation and background}

Hamiltonian systems of ordinary differential equations (ODEs) and
partial differential equations (PDEs) are ubiquitous in applications,
especially in the modeling of physical systems.

One essential property of a Hamiltonian ODE is that its time flow is
\emph{symplectic}: that is, it conserves a closed, nondegenerate
$2$-form on phase space. This has motivated the development of
\emph{symplectic integrators}: one-step numerical integrators that,
when applied to Hamiltonian ODEs, are also symplectic maps. It turns
out that these methods have several numerical advantages that result
from preserving the symplectic structure. Furthermore, most of these
methods (such as symplectic partitioned Runge--Kutta methods) may be
applied to general systems of ODEs, whether or not the user is aware
of any Hamiltonian/symplectic structure---but if such a structure is
present, then the methods will automatically preserve them.  For a
comprehensive survey of structure-preserving numerical integrators,
including symplectic integrators, see~\citet{HaLuWa2006}.

Similarly, Hamiltonian PDEs satisfy a \emph{multisymplectic
  conservation law}. Since symplecticity is a desirable property for
numerical integration of canonical Hamiltonian ODEs, it is natural to
seek numerical methods for canonical Hamiltonian PDEs whose solutions
satisfy the multisymplectic conservation law, in an appropriate sense.
There has been important work on multisymplectic methods over the past
two decades, particularly by Marsden and collaborators
\citep{MaPaSh1998,MaSh1999,MaPeShWe2001,LeMaOrWe2003} and Reich and
collaborators
\citep{Reich2000a,Reich2000b,BrRe2001,MoRe2003,FrMoRe2006}. However,
most of these methods have consisted either of tensor products of
symplectic Runge--Kutta-type methods on rectangular grids or of
relatively low-order, finite-difference-type methods on unstructured
meshes. For the variational integrators of Marsden et~al., one must
also know the (Lagrangian) geometric structure of the PDE, in advance,
in order to devise the method.

The impact of multisymplecticity on solutions to PDEs, and on their
discretizations, is not fully understood. However, multisymplecticity
is known to be necessary to preserve traveling waves of hyperbolic
equations \citep{mcdonald2016travelling}, and compact multisymplectic
methods can preserve dispersion relations much better than
non-multisymplectic methods or noncompact finite difference methods
\citep{ascher2004multisymplectic,mclachlan2011linear,mclachlan2014high}.
For boundary value problems, multisymplecticity restricts the types of
bifurcations that can occur
\citep{mclachlan2018preservation,mclachlan2018bifurcation}.  Because
it is a local property, multisymplecticity is a strictly stronger
property than the symplecticity obtained by integrating over space.

There has been some previous work on the application of finite element
methods to certain problems on structured (especially rectangular)
meshes. \citet{GuJiLiWu2001} considered the 2-{D} nonlinear Poisson
equation on a regular rectangular grid, meshed with biased triangles,
using the continuous Galerkin method with linear shape functions, and
they showed that the degrees of freedom satisfied a multisymplectic
finite-difference scheme. \citet{ZhBaLiWu2003} did the same for
first-order rectangular Lagrange elements on a regular
grid. \citet{Chen2008} used first- and second-order rectangular
elements to derive Lagrangian variational methods (in the sense of
\citet{MaPaSh1998}) and applied these to the sine-Gordon equation on a
regular grid, while pointing out that higher-order elements could also
be used in principle. In all of these examples, however, finite
elements were really only used as a tool to construct a
finite-difference stencil on a regular, 2-{D} rectangular grid.

As finite element methods are traditionally formulated, there is a
serious conceptual obstacle to discussing multisymplecticity. Namely,
many classical finite element methods are posed on spaces of
\emph{global} functions, making it difficult to make sense of
\emph{local} properties like the multisymplectic conservation
law. Indeed, the interpretation of multisymplecticity is much more
straightforward for finite difference or finite volume methods with
local stencils, or for tensor products of 1-{D} integrators on a
rectangular grid, which may explain why the previous work has been
focused on such methods.

Hybrid finite element methods provide a way around this obstacle,
since they consist of \emph{local problems coupled through their
  boundary traces}, where the boundary traces are allowed to be
independent variables. (Oftentimes, these boundary traces are
interpreted as ``Lagrange multipliers'' enforcing weak continuity
between local regions.) While hybrid methods have a long history (see
the comprehensive work by \citet{BrFo1991}), the recent work of
\citet{CoGoLa2009} has shown that a wide variety of finite element
methods---including those not previously thought of as hybrid
methods---may be ``hybridized'' within a unified framework. Such
methods are called hybridizable discontinuous Galerkin (HDG) methods,
and they include not only several classical mixed and hybrid methods,
but also hybridized versions of continuous and discontinuous Galerkin
methods, nonconforming methods, and others. This framework provides
precisely the local structure needed to examine multisymplecticity of
the finite element methods in this class.

\subsection{Organization of the paper} The paper is organized as follows:

\begin{itemize}
\item In \autoref{sec:pdes}, we review systems of PDEs in a particular
  canonical form. This form includes the de~Donder--Weyl equations for
  a Hamiltonian and many elliptic and hyperbolic variational PDEs. We
  recall the multisymplectic conservation law for classical (i.e.,
  smooth) solutions and illustrate how this manifests concretely for a
  class of semilinear elliptic PDEs in mixed form. We also discuss the
  relationship between multisymplecticity of solutions and reciprocity
  principles in physical systems.
  
\item In \autoref{sec:flux}, we develop a hybrid ``flux formulation''
  for these systems of PDEs. As in \citet{CoGoLa2009}, this yields a
  collection of weak problems on non-overlapping subdomains, coupled
  only through approximate traces on their shared boundaries. Our
  framework includes not only the linear second-order elliptic PDEs
  considered by \citet{CoGoLa2009}, but also a more general class of
  nonlinear systems of PDEs, including canonical Hamiltonian PDEs. 

  Within this formulation, we establish criteria for solutions to
  satisfy weak and strong versions of the multisymplectic conservation
  law; the distinction is shown to be related to weak and strong
  conservativity of numerical fluxes
  (cf.~\citet{ArBrCoMa2001,CoGoLa2009}). In addition to the subsequent
  numerical applications, we also use a domain-decomposition argument
  to write the weak problem (in the sense of distributions) in this
  flux formulation, thereby establishing multisymplecticity for weak
  solutions to Hamiltonian PDEs.
  
\item In \autoref{sec:hdg}, we examine several particular classes of
  HDG methods, including the hybridized Raviart--Thomas (RT-H),
  Brezzi--Douglas--Marini (BDM-H), local discontinuous Galerkin
  (LDG-H), continuous Galerkin (CG-H), nonconforming (NC-H), and
  interior penalty (IP-H) methods. These methods are posed in the
  framework of \autoref{sec:flux}, and their multisymplecticity is
  then examined. Each of these methods, except for CG-H, is proved to
  be strongly multisymplectic. A counterexample shows that CG-H is
  only weakly multisymplectic, resulting from the fact that it is only
  weakly conservative.
\end{itemize}

\section{Canonical and multisymplectic systems of PDEs}
\label{sec:pdes}

\subsection{Canonical systems of PDEs} Given a domain
$ U \subset \mathbb{R}^m $, consider a system of first-order PDEs
having the form
\begin{equation}
  \label{eqn:pdeSystem}
  \partial _\mu u ^i = \phi ^i _\mu ( \cdot , u , \sigma ) , \qquad - \partial _\mu \sigma _i ^\mu = f _i ( \cdot , u , \sigma ) ,
\end{equation}
where $ \mu = 1 , \ldots, m $ and $ i = 1 , \ldots , n $.  Here,
$ u = u ^i (x) $ and $ \sigma = \sigma _i ^\mu (x) $ are unknown
functions on $U$, while $ \phi = \phi ^i _\mu ( x, u , \sigma ) $ and
$ f = f _i ( x, u , \sigma ) $ are given functions on
$ U \times \mathbb{R}^n \times \mathbb{R} ^{ m n } $. We abbreviate
$ \partial _\mu \coloneqq \partial / \partial x ^\mu $ and adopt the
Einstein index convention of summing over repeated indices---so, for
instance, the expression $ \partial _\mu \sigma _i ^\mu $ in
\eqref{eqn:pdeSystem} has an implied sum over $\mu$ and may therefore
be interpreted as the divergence of $ \sigma _i $.

Among these is the important class of \emph{(canonical) Hamiltonian
  systems},
\begin{equation}
  \label{eqn:ddw}
  \partial _\mu u ^i = \frac{ \partial H }{ \partial \sigma _i ^\mu } , \qquad -\partial _\mu \sigma _i ^\mu = \frac{ \partial H }{ \partial u ^i } .
\end{equation}
where $ H = H ( x, u , \sigma ) $ is a function called the
\emph{Hamiltonian}. In the special case $ m = 1 $, the resulting
system of ODEs yields Hamilton's equations of classical mechanics,
which are usually written as
\begin{equation*}
  \dot{q} ^i = \frac{ \partial H }{ \partial p _i } , \qquad - \dot{p} _i = \frac{ \partial H }{ \partial q ^i } .
\end{equation*}
The equations \eqref{eqn:ddw} are called the \emph{de~Donder--Weyl
  equations} (\citet{deDonder1935,Weyl1935}). These canonical systems
are an important special case of a more general class of Hamiltonian
systems of PDEs, cf.~\citet{Bridges1996,Bridges1997}.

Throughout this section, we assume that all of the functions above are
smooth. Later, in \autoref{sec:flux}, we will relax this assumption in
order to introduce a weak formulation of \eqref{eqn:pdeSystem}.

\begin{example}[semilinear elliptic PDE]
  \label{ex:semilinearIntro}
  Let $ n = 1 $, so that $u = u (x) $ is a scalar field and
  $\sigma = \sigma ^\mu (x) $ a vector field on
  $U \subset \mathbb{R}^m $. Consider
  \begin{equation*}
    H ( x , u , \sigma ) = \frac{1}{2} a _{ \mu \nu } (x) \sigma ^\mu
    \sigma ^\nu + F (x , u) ,
  \end{equation*}
  where $ a = a ^{ \mu \nu } (x) $ is symmetric and positive-definite
  with matrix inverse
  $ a _{ \mu \nu } (x) \coloneqq \bigl( a ^{ \mu \nu } (x) \bigr)
  ^{-1} $ at each $ x \in U $, and where $F$ is arbitrary.  Then the
  de~Donder--Weyl equations for this Hamiltonian are
  \begin{equation*}
    \partial _\mu u = a _{ \mu \nu } \sigma ^\nu , \qquad -\partial _\mu \sigma ^\mu = \frac{ \partial F }{ \partial u } . 
  \end{equation*}
  From the first of these equations, we have
  $ \sigma ^\mu = a ^{ \mu \nu } \partial _\nu u $, so substituting
  this into the second yields
  \begin{equation*}
    - \partial _\mu a ^{ \mu \nu } \partial _\nu u = \frac{ \partial F }{ \partial u } ,
  \end{equation*}
  which is a second-order semilinear elliptic PDE in divergence
  form. Note that we could also have written the de~Donder--Weyl
  equations in the equivalent, coordinate-free form,
  \begin{equation*}
    \operatorname{grad} u = a ^{-1} \sigma , \qquad - \operatorname{div} \sigma = \frac{ \partial F }{ \partial u },
  \end{equation*}
  so the substitution $ \sigma = a \operatorname{grad} u $ yields
  \begin{equation*}
    - \operatorname{div} ( a \operatorname{grad} u ) = \frac{ \partial F }{ \partial u },
  \end{equation*}
  which is an equivalent expression for the second-order PDE above.

  An important special case is when
  $ F ( x, u ) = f (x) u - \frac{1}{2} c (x) u ^2 $ for given $f$ and
  $c$ on $U$. In this case, we obtain a linear second-order
  elliptic PDE,
  \begin{equation*}
    - \operatorname{div} ( a \operatorname{grad} u ) + c u = - \partial _\mu a ^{ \mu \nu } \partial _\nu u + c u = f .
  \end{equation*}
  In particular, if $ a ^{ \mu \nu } \equiv \delta ^{ \mu \nu } $
  (where $\delta$ is the Kronecker delta, i.e., $a$ is the identity
  matrix) and $ c \equiv 0 $, then this simply becomes Poisson's
  equation $ - \Delta u = f $ on $U$.

  We will regularly return to this example throughout the paper.
\end{example}

\subsection{The multisymplectic conservation law} Define the
collection of \emph{canonical 2-forms}
$ \omega ^\mu \coloneqq \mathrm{d} u ^i \wedge \mathrm{d} \sigma _i
^\mu $ on $ \mathbb{R}^n \times \mathbb{R} ^{ m n } $, for
$ \mu = 1 , \ldots, m $.

\begin{notation}
  Unless otherwise stated, differential forms and exterior
  differential operators (such as $ \mathrm{d} $, $ \wedge $, etc.)
  are assumed to be on $ \mathbb{R} ^n \times \mathbb{R} ^{ m n } $,
  where $ x \in U $ (if it appears) is fixed. Differentiation with
  respect to $x$ will always be denoted using the previously-defined
  $ \partial _\mu $ notation.
\end{notation}

\begin{definition}
  Let $ ( u, \sigma ) $ be a solution to \eqref{eqn:pdeSystem}. A
  \emph{(first) variation} of $ ( u , \sigma ) $ is a solution
  $ ( v, \tau ) $ of the linearized problem
  \begin{subequations}
    \label{eqn:linearizedProblem}
    \begin{align}
      \partial _\mu v ^i &= \frac{ \partial \phi ^i _\mu }{ \partial u ^j } ( \cdot , u , \sigma ) v ^j + \frac{ \partial \phi ^i _\mu }{ \partial \sigma _j ^\nu } ( \cdot , u , \sigma ) \tau _j ^\nu ,\\
      - \partial _\mu \tau  _i ^\mu &= \frac{ \partial f _i }{ \partial u ^j } ( \cdot , u , \sigma ) v ^j + \frac{ \partial f _i }{ \partial \sigma _j ^\nu } ( \cdot , u , \sigma ) \tau _j ^\nu .
    \end{align}
  \end{subequations}
  The system \eqref{eqn:pdeSystem} is \emph{multisymplectic} if
  $ \partial _\mu \Bigl( \omega ^\mu \bigl( ( v , \tau ) , ( v ^\prime
  , \tau ^\prime ) \bigr) \Bigr) = 0 $ for any pair of variations
  $ ( v, \tau ) $ and $ ( v ^\prime , \tau ^\prime ) $. This is
  abbreviated by
  \begin{equation}
    \label{eqn:msclDifferential}
    \partial _\mu \omega ^\mu = 0 ,
  \end{equation}
  where it is understood that $ \omega ^\mu $ is evaluated on
  variations of solutions to \eqref{eqn:pdeSystem}.  The equation
  \eqref{eqn:msclDifferential} is called the \emph{multisymplectic
    conservation law}.
\end{definition}

\begin{lemma}
  \label{lem:multisymplecticClosed}
  The system \eqref{eqn:pdeSystem} is multisymplectic if and only if
  the $1$-form
  $ \phi ^i _\mu \,\mathrm{d}\sigma _i ^\mu + f _i \,\mathrm{d}u ^i $
  on $ \mathbb{R}^n \times \mathbb{R} ^{ m n } $ is closed for each
  $ x \in U $.
\end{lemma}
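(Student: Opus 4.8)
The plan is to reduce the lemma to a single pointwise identity. Writing $\alpha \coloneqq \phi^i_\mu\,\mathrm{d}\sigma_i^\mu + f_i\,\mathrm{d}u^i$ for the $1$-form in question, I would show that whenever $\omega^\mu$ is evaluated on a pair of variations one has, at every $x \in U$,
$ \partial_\mu \omega^\mu = \mathrm{d}\alpha\bigl( (v,\tau),(v',\tau') \bigr) $.
The ``if'' direction is then immediate, since $\alpha$ closed means $\mathrm{d}\alpha = 0$; the ``only if'' direction follows once one knows that variations realize arbitrary tangent vectors. So the work splits into establishing the identity (routine) and justifying that realizability (the delicate point).

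First I would record that, since a variation pairs with the coordinate differentials as $\mathrm{d}u^i(v,\tau) = v^i$ and $\mathrm{d}\sigma_i^\mu(v,\tau) = \tau_i^\mu$, evaluating the canonical $2$-form gives $\omega^\mu\bigl( (v,\tau),(v',\tau') \bigr) = v^i (\tau')_i^\mu - (v')^i \tau_i^\mu$. Applying $\partial_\mu$ by the Leibniz rule produces four terms involving $\partial_\mu v^i$, $\partial_\mu \tau_i^\mu$, and their primed counterparts, and I would substitute the right-hand sides of the linearized problem \eqref{eqn:linearizedProblem} for each of these. The outcome is a sum of the partial derivatives of $\phi$ and $f$ paired against the bilinear combinations $v(\tau')$, $(v')\tau$, $vv'$, and $\tau\tau'$ of the two variations.

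The key step is to reorganize this sum, by relabeling dummy indices, so that it matches $\mathrm{d}\alpha$ contracted against the variations. Expanding $\mathrm{d}\alpha = \mathrm{d}\phi^i_\mu \wedge \mathrm{d}\sigma_i^\mu + \mathrm{d}f_i \wedge \mathrm{d}u^i$ and collecting on the basis $2$-forms, one finds its components to be $\partial\phi^i_\mu/\partial u^j - \partial f_j/\partial\sigma_i^\mu$ on $\mathrm{d}u^j \wedge \mathrm{d}\sigma_i^\mu$, the antisymmetrization of $\partial\phi^i_\mu/\partial\sigma_j^\nu$ on $\mathrm{d}\sigma_j^\nu \wedge \mathrm{d}\sigma_i^\mu$, and the antisymmetrization of $\partial f_i/\partial u^j$ on $\mathrm{d}u^j \wedge \mathrm{d}u^i$; contracting these against $(v,\tau)$ and $(v',\tau')$ reproduces exactly the reorganized divergence. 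Hence $\alpha$ closed forces $\partial_\mu \omega^\mu = 0$ for every pair of variations, which is the ``if'' direction. (Equivalently, one may simply read off the three symmetry conditions $\partial\phi^i_\mu/\partial\sigma_j^\nu = \partial\phi^j_\nu/\partial\sigma_i^\mu$, $\partial f_i/\partial u^j = \partial f_j/\partial u^i$, and $\partial\phi^i_\mu/\partial u^j = \partial f_j/\partial\sigma_i^\mu$.)

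For the converse I would argue that, because the linearized equations \eqref{eqn:linearizedProblem} constrain only the derivatives of a variation and not its value at a point, any tangent vector in $\mathbb{R}^n \times \mathbb{R}^{mn}$ arises as $(v,\tau)(x)$ for some genuine variation about some solution passing through the corresponding phase-space point. The multisymplectic hypothesis then makes $\mathrm{d}\alpha$ vanish on an arbitrary pair of tangent vectors at an arbitrary point, so $\mathrm{d}\alpha = 0$ identically and $\alpha$ is closed. I expect this realizability step to be the main obstacle: it rests on local existence of solutions to \eqref{eqn:pdeSystem} through a prescribed phase-space point and of accompanying variations with prescribed values there, which is precisely where the smoothness assumptions (and, in degenerate cases, a little care about the structure of the first-order system, e.g.\ inverting $\phi$ in $\sigma$ to reduce to a second-order equation as in \autoref{ex:semilinearIntro}) actually enter. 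The algebraic identity itself is only index bookkeeping.
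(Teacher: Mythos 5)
Your core identity---that along any solution and any pair of variations, $\partial_\mu\bigl(\omega^\mu((v,\tau),(v',\tau'))\bigr)$ equals $\mathrm{d}\alpha$ contracted with the pair $((v,\tau),(v',\tau'))$---is precisely the paper's proof, which carries out the same computation in three lines of exterior-calculus shorthand, $\partial_\mu \omega^\mu = \mathrm{d}\phi^i_\mu \wedge \mathrm{d}\sigma_i^\mu - \mathrm{d}u^i \wedge \mathrm{d}f_i = \mathrm{d}(\phi^i_\mu\,\mathrm{d}\sigma_i^\mu + f_i\,\mathrm{d}u^i)$, rather than in indices; your component bookkeeping and the three symmetry conditions you read off are just the coordinate form of this. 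The ``if'' direction is then identical in both treatments.

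The difference is in the converse, and it is instructive. The paper simply declares ``the first expression vanishes if and only if the last expression vanishes,'' which is legitimate only under its stated convention (see the remark following the lemma) that such form identities are interpreted as holding when evaluated on variations of solutions; the paper never claims, and never uses, that variations realize arbitrary tangent vectors at arbitrary points of $\mathbb{R}^n \times \mathbb{R}^{mn}$. You instead read ``closed for each $x \in U$'' literally, as $\mathrm{d}\alpha = 0$ at every phase-space point, and you correctly identify that this stronger conclusion needs a realizability argument. But the argument you sketch---that the linearized equations constrain only derivatives, not pointwise values---cannot be completed in the generality of \eqref{eqn:pdeSystem}: when $m > 1$ the equations $\partial_\mu u^i = \phi^i_\mu$ are overdetermined, equality of cross-derivatives imposes compatibility conditions, and solutions need not pass through a prescribed phase point. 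In fact the literal converse is false: with $m = 2$, $n = 1$, the smooth system $\partial_1 u = \sigma^2$, $\partial_2 u = -\sigma^1$, $-\partial_\mu \sigma^\mu = 1$ has no $C^2$ solutions at all (cross-derivative compatibility forces $\partial_\mu \sigma^\mu = 0$), so it is vacuously multisymplectic, yet $\alpha = \sigma^2\,\mathrm{d}\sigma^1 - \sigma^1\,\mathrm{d}\sigma^2 + \mathrm{d}u$ satisfies $\mathrm{d}\alpha = -2\,\mathrm{d}\sigma^1 \wedge \mathrm{d}\sigma^2 \neq 0$. So the step you defer is not merely delicate but unavailable without hypotheses beyond smoothness (e.g., some guarantee of a sufficiently rich solution family); to obtain the lemma exactly as stated one must adopt the paper's formal reading, under which your identity already completes the proof.
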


\begin{proof}
  Using \eqref{eqn:pdeSystem}, we calculate
  \begin{align*}
    \partial _\mu \omega ^\mu
    &= \mathrm{d} ( \partial _\mu u ^i ) \wedge \mathrm{d} \sigma _i ^\mu + \mathrm{d} u ^i \wedge \mathrm{d} ( \partial _\mu \sigma _i ^\mu ) \\
    &= \mathrm{d} \phi ^i _\mu \wedge \mathrm{d} \sigma _i ^\mu - \mathrm{d} u ^i \wedge \mathrm{d} f _i  \\
    &= \mathrm{d} ( \phi ^i _\mu \,\mathrm{d}\sigma _i ^\mu + f _i \,\mathrm{d}u ^i ) ,
  \end{align*}
  so the first expression vanishes if and only if the last expression
  vanishes.
\end{proof}

\begin{remark}
  Certain steps in this calculation, such as
  $ \partial _\mu (\mathrm{d} u ^i) = \mathrm{d} ( \partial _\mu u ^i
  ) $, are seen to be valid by evaluating both sides on arbitrary
  variations of $ ( u, \sigma ) $:
  \begin{equation*}
    \partial _\mu \bigl( \mathrm{d} u ^i ( v, \tau ) \bigr) = \partial _\mu v ^i = \mathrm{d} \phi ^i _\mu ( v, \tau ) = \mathrm{d} ( \partial _\mu u ) ( v, \tau ) .
  \end{equation*}
  Similar calculations with differential forms will appear throughout
  this paper, where they are interpreted as holding for arbitrary
  variations of solutions.
\end{remark}

\begin{corollary}
  \label{cor:hamImpliesMult}
  Every Hamiltonian system is multisymplectic.
\end{corollary}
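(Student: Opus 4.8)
The plan is to invoke \autoref{lem:multisymplecticClosed}, which reduces multisymplecticity to the closedness of a particular $1$-form, and then to recognize that for a Hamiltonian system this $1$-form is in fact exact. First I would observe that \eqref{eqn:ddw} is precisely the special case of \eqref{eqn:pdeSystem} in which $\phi^i_\mu = \partial H / \partial \sigma_i^\mu$ and $f_i = \partial H / \partial u^i$. By the lemma, the system is then multisymplectic if and only if the $1$-form $\phi^i_\mu \,\mathrm{d}\sigma_i^\mu + f_i \,\mathrm{d}u^i$ is closed for each fixed $x \in U$.

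Substituting the Hamiltonian expressions for $\phi$ and $f$, I expect this $1$-form to collapse to
\begin{equation*}
  \frac{\partial H}{\partial \sigma_i^\mu} \,\mathrm{d}\sigma_i^\mu + \frac{\partial H}{\partial u^i} \,\mathrm{d}u^i = \mathrm{d}H ,
\end{equation*}
the full exterior derivative of $H(x, \cdot, \cdot)$ on the fiber $\mathbb{R}^n \times \mathbb{R}^{mn}$ with $x$ held fixed (consistent with the standing notational convention). Since every exact form is closed, $\mathrm{d}(\mathrm{d}H) = 0$ by $\mathrm{d}^2 = 0$, so the hypothesis of the lemma is satisfied and the conclusion follows at once.

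Honestly, there is no substantive obstacle here: given \autoref{lem:multisymplecticClosed}, the corollary is essentially a restatement of $\mathrm{d}^2 = 0$. The only point requiring a moment's care is confirming that the two groups of terms supplied by the lemma together exhaust \emph{all} the fiber partial derivatives of $H$---which they do, since $(u^i, \sigma_i^\mu)$ are exactly the coordinates on $\mathbb{R}^n \times \mathbb{R}^{mn}$---so that the sum genuinely equals $\mathrm{d}H$ rather than a partial differential.
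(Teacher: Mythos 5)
Your proposal is correct and follows exactly the paper's own argument: apply \autoref{lem:multisymplecticClosed}, note that substituting \eqref{eqn:ddw} turns the $1$-form $\phi^i_\mu \,\mathrm{d}\sigma_i^\mu + f_i \,\mathrm{d}u^i$ into $\mathrm{d}H$ (the fiber differential with $x$ fixed), and conclude closedness from exactness. Your added remark that the two groups of terms exhaust all fiber coordinates $(u^i, \sigma_i^\mu)$ is a fair point of care that the paper leaves implicit.
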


\begin{proof}
  From \eqref{eqn:ddw}, we have
  \begin{equation*}
    \phi ^i _\mu \,\mathrm{d}\sigma _i ^\mu + f _i \,\mathrm{d}u ^i = \frac{ \partial H }{ \partial \sigma _i ^\mu } \,\mathrm{d}\sigma _i ^\mu + \frac{ \partial H }{ \partial u ^i } \,\mathrm{d}u ^i = \mathrm{d} H ,
  \end{equation*}
  which is exact and therefore closed.
\end{proof}

\begin{corollary}
  \label{cor:multImpliesHam}
  Every multisymplectic system is Hamiltonian.
\end{corollary}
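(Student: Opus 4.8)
The plan is to read the previous \autoref{lem:multisymplecticClosed} backwards: the hypothesis of multisymplecticity is, by that lemma, exactly the statement that the $1$-form
\begin{equation*}
  \alpha \coloneqq \phi ^i _\mu \,\mathrm{d}\sigma _i ^\mu + f _i \,\mathrm{d}u ^i
\end{equation*}
is closed on $ \mathbb{R}^n \times \mathbb{R} ^{ m n } $ for each fixed $ x \in U $. To conclude that the system is Hamiltonian, it suffices to produce a function $ H = H ( x, u , \sigma ) $ with $ \mathrm{d} H = \alpha $, since matching the coefficients of $ \mathrm{d}\sigma _i ^\mu $ and $ \mathrm{d}u ^i $ then gives $ \partial H / \partial \sigma _i ^\mu = \phi ^i _\mu $ and $ \partial H / \partial u ^i = f _i $, which are precisely the de~Donder--Weyl equations \eqref{eqn:ddw}. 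The existence of such an $H$ is the Poincar\'e lemma: the fiber space $ \mathbb{R}^n \times \mathbb{R} ^{ m n } $ is star-shaped about the origin, so every closed $1$-form on it is exact.

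Concretely, I would exhibit $H$ via the standard homotopy operator, contracting the fiber variables toward the origin:
\begin{equation*}
  H ( x, u , \sigma ) = \int _0 ^1 \Bigl( \phi ^i _\mu ( x , t u , t \sigma ) \, \sigma _i ^\mu + f _i ( x , t u , t \sigma ) \, u ^i \Bigr) \,\mathrm{d} t .
\end{equation*}
Differentiating under the integral sign and substituting the componentwise closedness relations coming from $ \mathrm{d}\alpha = 0 $---namely $ \partial \phi ^i _\mu / \partial \sigma _j ^\nu = \partial \phi ^j _\nu / \partial \sigma _i ^\mu $ and $ \partial \phi ^i _\mu / \partial u ^j = \partial f _j / \partial \sigma _i ^\mu $---turns each integrand into a total $t$-derivative of the form $ \tfrac{\mathrm{d}}{\mathrm{d} t} \bigl( t \, \phi ^i _\mu ( x, tu , t\sigma ) \bigr) $, whose integral over $ [0,1] $ collapses to $ \phi ^i _\mu ( x, u , \sigma ) $; the analogous computation gives $ \partial H / \partial u ^i = f _i $. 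This is a routine verification, so I would not belabor it.

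The one point I would check with care is that $H$, built fiberwise for each $x$, in fact depends smoothly on the parameter $x$ as well, so that it is a bona fide function on $ U \times \mathbb{R}^n \times \mathbb{R} ^{ m n } $. This is immediate from the explicit formula: since $ \phi $ and $ f $ are assumed smooth jointly in $ ( x, u , \sigma ) $, the integrand is smooth in all of its arguments, and smoothness in $x$ passes through the $t$-integral by differentiation under the integral sign. Thus the nontrivial content of the statement is purely the fiberwise Poincar\'e lemma, and the only bookkeeping is this dependence on $x$.
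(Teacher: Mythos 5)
Your proof is correct, and its core is the same as the paper's: multisymplecticity is converted via \autoref{lem:multisymplecticClosed} into closedness of the fiberwise $1$-form $\phi^i_\mu\,\mathrm{d}\sigma^\mu_i + f_i\,\mathrm{d}u^i$, and the topological triviality of $\mathbb{R}^n\times\mathbb{R}^{mn}$ then yields exactness, i.e., a Hamiltonian. The difference is in execution: the paper argues abstractly (simple connectedness implies trivial first de~Rham cohomology, hence the closed form equals $\mathrm{d}H_x$ for each fixed $x$, ``and these may be combined into a single Hamiltonian''), whereas you invoke the Poincar\'e lemma in its constructive form, writing $H$ explicitly via the homotopy operator $H(x,u,\sigma)=\int_0^1\bigl(\phi^i_\mu(x,tu,t\sigma)\,\sigma^\mu_i + f_i(x,tu,t\sigma)\,u^i\bigr)\,\mathrm{d}t$. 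This buys you something the paper leaves implicit: each $H_x$ is only determined up to an additive constant, so the paper's ``combining'' step silently requires choosing these constants coherently in $x$; your formula makes a canonical choice and shows directly, by differentiation under the integral sign, that the resulting $H$ is smooth jointly in $(x,u,\sigma)$. (This matters only if one wants $H$ smooth in $x$; the de~Donder--Weyl equations \eqref{eqn:ddw} themselves involve only $u$- and $\sigma$-derivatives of $H$.) One small bookkeeping remark: closedness of the $1$-form gives three families of symmetry relations, including $\partial f_i/\partial u^j = \partial f_j/\partial u^i$, and your verification of $\partial H/\partial u^i = f_i$ uses this third family alongside the two you listed; it is worth stating when the ``analogous computation'' is carried out.
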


\begin{proof}
  Since $ \mathbb{R}^n \times \mathbb{R} ^{ m n } $ is simply
  connected, its first de~Rham cohomology is trivial. Hence, the
  closed $1$-form
  $ \phi ^i _\mu \,\mathrm{d}\sigma _i ^\mu + f _i \,\mathrm{d}u ^i $
  is exact, i.e., it equals $ \mathrm{d} H $ for some Hamiltonian
  $H$. More precisely, for each fixed $ x \in U $, the $1$-form equals
  $ \mathrm{d} H _x $ for some function $H _x $ on
  $ \mathbb{R}^n \times \mathbb{R} ^{ m n } $, and these may be
  combined into a single Hamiltonian
  $ H ( x, u, \sigma ) \coloneqq H _x ( u, \sigma ) $.
\end{proof}

\begin{remark}
  \autoref{cor:multImpliesHam} depends entirely on the fact that
  $ \mathbb{R}^n \times \mathbb{R} ^{ m n } $ has trivial first
  de~Rham cohomology. However, it is possible to define canonical
  Hamiltonian systems on more general spaces---in particular, on the
  dual jet bundle of some fiber bundle over $U$ (cf.~\citet{Gotay1991}
  and references therein). In this setting, the argument of
  \autoref{cor:multImpliesHam} holds only if the fibers of this bundle
  have trivial first de~Rham cohomology. However, a weaker
  statement---that every multisymplectic system is \emph{locally}
  Hamiltonian---still holds, by Poincar\'e's lemma.  By contrast,
  \autoref{cor:hamImpliesMult} remains true even in this more general
  setting.
\end{remark}

\begin{example}[semilinear elliptic PDE, continued]
  \label{ex:semilinearMSCLdiff}
  Let us see how the multisymplectic conservation law manifests in the
  class of semilinear elliptic PDEs we encountered in
  \autoref{ex:semilinearIntro}. For the system
  \begin{equation*}
    \partial _\mu u = a _{ \mu \nu } \sigma ^\nu , \qquad - \partial _\mu \sigma ^\mu = \frac{ \partial F }{ \partial u },
  \end{equation*}
  we calculate
  \begin{align*} 
    \partial _\mu \omega ^\mu
    &= \partial _\mu ( \mathrm{d} u \wedge \mathrm{d} \sigma ^\mu ) \\
    &= \mathrm{d} ( \partial _\mu u ) \wedge \mathrm{d} \sigma ^\mu + \mathrm{d} u \wedge \mathrm{d} ( \partial _\mu \sigma ^\mu ) \\
    &=  a _{ \mu \nu } \,\mathrm{d}\sigma ^\nu \wedge \mathrm{d} \sigma ^\mu + \mathrm{d} u \wedge \biggl( - \frac{  \partial ^2 F }{ \partial u ^2 } \,\mathrm{d}u - \frac{ \partial ^2 F }{ \partial \sigma \partial u } \,\mathrm{d}\sigma \biggr).
  \end{align*}
  The first term vanishes by the symmetry of $a$ and the antisymmetry
  of $ \wedge $, while the remaining terms vanish since
  $ \mathrm{d} u \wedge \mathrm{d} u = 0 $ (again, the antisymmetry of
  $ \wedge $) and since $F = F (x,u) $ does not depend on $\sigma$.
\end{example}

\subsection{Integral form of the multisymplectic conservation law}
\label{sec:msclIntegral}
Given an arbitrary subdomain
$ K \Subset U $, the divergence theorem implies that
\begin{equation*}
  \int _K \partial _\mu \omega ^\mu \,\mathrm{d} ^m x = \int _{ \partial K } \omega ^\mu \,\mathrm{d} ^{ m -1 } x _\mu ,
\end{equation*}
where
$ \mathrm{d} ^m x \coloneqq \mathrm{d} x ^1 \wedge \cdots \wedge
\mathrm{d} x ^m $ is the standard Euclidean volume form on $U$ and
$ \mathrm{d}^{m-1} x_\mu \coloneqq \iota _{ \partial / \partial x ^\mu
} ( \mathrm{d} ^m x ) $, where $ \iota $ is the interior product (or
contraction).  Therefore, an equivalent formulation of the
multisymplectic conservation law \eqref{eqn:msclDifferential} is that
\begin{equation}
  \label{eqn:msclIntegral}
  \int _{ \partial K } \omega ^\mu \,\mathrm{d}^{m-1} x_\mu = 0 , \quad \forall  K \Subset U.
\end{equation}
We call this the \emph{integral form of the multisymplectic
  conservation law}. As with \eqref{eqn:msclDifferential}, this is
interpreted as holding when $ \omega ^\mu $ is evaluated on arbitrary
variations of a solution to \eqref{eqn:pdeSystem}.

Note that, by the definition of the $ \wedge $ product,
\begin{equation*}
  \omega ^\mu = \mathrm{d} u ^i \wedge \mathrm{d} \sigma _i ^\mu = \mathrm{d} u ^i \otimes \mathrm{d} \sigma _i ^\mu - \mathrm{d} \sigma _i ^\mu \otimes \mathrm{d} u ^i ,
\end{equation*}
so \eqref{eqn:msclIntegral} may also be written as
\begin{equation}
  \label{eqn:msclTensor}
  \int _{ \partial K } ( \mathrm{d} u ^i \otimes \mathrm{d} \sigma _i ^\mu ) \,\mathrm{d}^{m-1} x_\mu = \int _{ \partial K } ( \mathrm{d} \sigma _i ^\mu \otimes \mathrm{d} u ^i ) \,\mathrm{d}^{m-1} x_\mu .
\end{equation}
Hence, the multisymplectic conservation law may be interpreted as a
symmetry condition on the Poincar\'e--Steklov operator mapping
Dirichlet boundary conditions for $u$ to the corresponding boundary
conditions for $\sigma$. See \citet{Agoshkov1988}, where the symmetry
of the Poincar\'e--Steklov operator is discussed in the context of
domain decomposition methods for linear elliptic problems.  Similarly,
\citet{BeSh2008} establish the symmetry of a Poincar\'e--Steklov
operator for harmonic differential forms on a manifold with boundary.

\begin{example}[semilinear elliptic PDE, continued]
  \label{ex:semilinearMSCLint}
  Let us revisit the class of semilinear elliptic PDEs we encountered
  in \autoref{ex:semilinearIntro} and
  \autoref{ex:semilinearMSCLdiff}. Let $ ( u, \sigma ) $ be a
  solution, and consider the linearized problem
  \begin{equation*}
    \partial _\mu v = a _{ \mu \nu } \tau ^\nu , \qquad - \partial _\mu \tau ^\mu = \frac{ \partial ^2 F }{ \partial u ^2 } v .
  \end{equation*}
  Here, $ \partial ^2 F / \partial u ^2 $ is evaluated at
  $ \bigl( x, u (x) \bigr) $ and hence is a function of $x$ alone.  If
  $ ( v, \tau ) $ and $ ( v ^\prime , \tau ^\prime ) $ are two
  arbitrary solutions to this problem, then
  \begin{align*}
    \int _{ \partial K } v \tau ^{\prime \mu} \,\mathrm{d}^{m-1} x_\mu &= \int _K \bigl[  (\partial _\mu v) \tau ^{ \prime \mu }  + v (\partial _\mu \tau ^{ \prime \mu } ) \bigr]  \,\mathrm{d}^m x \\
                                                                       &= \int _K \biggl( a _{ \mu \nu } \tau ^\nu \tau ^{ \prime \mu } - v \frac{ \partial ^2 F }{ \partial u ^2 } v ^\prime \biggr) \,\mathrm{d}^m x .
  \end{align*}
  By a similar calculation, switching $ ( v, \tau ) $ with
  $ ( v ^\prime, \tau ^\prime ) $,
  \begin{equation*}
    \int _{ \partial K } v ^\prime \tau ^\mu \,\mathrm{d}^{m-1} x_\mu = \int _K \biggl( a _{ \mu \nu } \tau ^{\prime \nu} \tau ^\mu - v ^\prime \frac{ \partial ^2 F }{ \partial u ^2 } v \biggr) \,\mathrm{d}^m x .
  \end{equation*}
  However, since $a$ is symmetric, these integrals are identical, and
  we conclude
  \begin{equation*}
    \int _{ \partial K } v \tau ^{\prime \mu} \,\mathrm{d}^{m-1} x_\mu =     \int _{ \partial K } v ^\prime \tau ^\mu \,\mathrm{d}^{m-1} x_\mu .
  \end{equation*}
  That this equality holds for every $ ( v, \tau ) $ and
  $ ( v ^\prime, \tau ^\prime ) $ is precisely the statement
  \eqref{eqn:msclTensor} of the multisymplectic conservation law.

  In the special case $ a ^{ \mu \nu } \equiv \delta ^{ \mu \nu } $,
  we have $ \tau = \operatorname{grad} v $ and
  $ \tau ^\prime = \operatorname{grad} v ^\prime $, so this can be
  written as
  \begin{equation*}
    \int _{ \partial K } v \operatorname{grad} v ^\prime \cdot \mathbf{n} = \int _{ \partial K } v ^\prime \operatorname{grad} v \cdot \mathbf{n} 
  \end{equation*} 
  where $ \mathbf{n} $ denotes the outer unit normal to
  $ \partial K $. Hence, in this case, the multisymplectic
  conservation law expresses the symmetry of the Dirichlet--Neumann
  operator
  $ v \rvert _{ \partial K } \mapsto \operatorname{grad} v \cdot
  \mathbf{n} \rvert _{ \partial K } $ for the linearized problem.
\end{example}

\subsection{Multisymplecticity and reciprocity}

In many physical systems, the multisymplectic conservation law is
closely tied to so-called \emph{reciprocity} phenomena, such as
Green's reciprocity in electrostatics and Betti reciprocity in
elasticity. (See, for example, \citet[Section 5.3]{AbMa1978},
\citet[Section 5.6]{MaHu1994}, and \citet{LeMaOrWe2003}.) These
reciprocity phenomena are also exploited, numerically, in formulations
of the boundary element method (cf.~\citet{PaBrWr1992}). We now
briefly discuss the relationship between multisymplecticity and
reciprocity, using the language we have developed throughout this
section.

Let $ ( u, \sigma ) $ be a solution to \eqref{eqn:pdeSystem}.  The
multisymplectic conservation law \eqref{eqn:msclDifferential} is just
the statement that
\begin{equation*}
  \partial _\mu ( v ^i \tau _i ^{ \prime \mu } ) = \partial _\mu ( v ^{\prime i} \tau _i ^\mu ) ,
\end{equation*}
where $ ( v, \tau ) $ and $ ( v ^\prime , \tau ^\prime ) $ are
arbitrary variations of $ ( u, \sigma ) $, i.e., solutions to the
linearized problem \eqref{eqn:linearizedProblem}. Integrating both
sides over $ K \Subset U $ and applying the divergence theorem gives
\begin{equation*}
  \int _{ \partial K } v ^i \tau _i ^{ \prime \mu } \,\mathrm{d}^{m-1} x_\mu =   \int _{ \partial K } v ^{\prime i} \tau _i ^\mu \,\mathrm{d}^{m-1} x_\mu,
\end{equation*}
which is the statement of \eqref{eqn:msclIntegral} and
\eqref{eqn:msclTensor}.

We now generalize the above to the case where the variations
$ ( v, \tau ) $ and $ ( v ^\prime , \tau ^\prime ) $ each solve a
\emph{perturbed} version of the linearized problem.

\begin{definition}
  Given a solution $ ( u, \sigma ) $ to \eqref{eqn:pdeSystem}, we say
  that $ ( v, \tau ) $ solves the \emph{linearized problem with
    incremental sources} $\psi$ and $g$ if
  \begin{subequations}
    \label{eqn:linearized}
    \begin{align}
      \partial _\mu v ^i &= \frac{ \partial \phi ^i _\mu }{ \partial u ^j } ( \cdot , u , \sigma ) v ^j + \frac{ \partial \phi ^i _\mu }{ \partial \sigma _j ^\nu } ( \cdot , u , \sigma ) \tau _j ^\nu + \psi ^i _\mu ( \cdot , v, \tau ) ,       \label{eqn:linearizedA}\\
      - \partial _\mu \tau  _i ^\mu &= \frac{ \partial f _i }{ \partial u ^j } ( \cdot , u , \sigma ) v ^j + \frac{ \partial f _i }{ \partial \sigma _j ^\nu } ( \cdot , u , \sigma ) \tau _j ^\nu + g _i ( \cdot , v , \tau )       \label{eqn:linearizedB}      ,
  \end{align}
\end{subequations}
  where $ \psi = \psi ^i _\mu (x, v, \tau ) $ and
  $ g = g _i ( x, v, \tau ) $ are given functions.
\end{definition}

Let $ ( v, \tau ) $ be a solution to the linearized problem with
incremental sources $ \psi $ and $g$, and let
$ ( v ^\prime , \tau ^\prime ) $ be a solution to the linearized
problem with incremental sources $ \psi ^\prime $ and $g ^\prime $. By
the Leibniz rule,
\begin{align*}
  \partial _\mu ( v ^i \tau _i ^{ \prime \mu } ) &= ( \partial _\mu v ^i ) \tau _i ^{ \prime \mu } + v ^i (\partial _\mu \tau _i ^{ \prime \mu }) ,\\
  \partial _\mu ( v ^{\prime i} \tau _i ^\mu ) &= ( \partial _\mu v ^{\prime i} ) \tau _i ^\mu + v ^{\prime i} (\partial _\mu \tau _i ^\mu) ,
\end{align*}
which we subtract and rearrange to obtain
\begin{equation*}
  \partial _\mu ( v ^i \tau _i ^{ \prime \mu } ) - ( \partial _\mu v ^i ) \tau _i ^{ \prime \mu } - v ^i (\partial _\mu \tau _i ^{ \prime \mu }) = \partial _\mu ( v ^{\prime i} \tau _i ^\mu ) - ( \partial _\mu v ^{\prime i} ) \tau _i ^\mu - v ^{\prime i} (\partial _\mu \tau _i ^\mu) .
\end{equation*}
Assuming that \eqref{eqn:pdeSystem} is multisymplectic, the $\phi$ and
$f$ terms cancel when we substitute \eqref{eqn:linearized}, leaving
the equation
\begin{multline}
  \label{eqn:recipDiff}
  \partial _\mu ( v ^i \tau _i ^{ \prime \mu } ) - \psi ^i _\mu ( \cdot , v, \tau ) \tau _i ^{ \prime \mu } + v ^i g _i ^\prime ( \cdot , v ^\prime , \tau ^\prime ) \\
  = \partial _\mu ( v ^{\prime i} \tau _i ^\mu ) - \psi ^{\prime i}
  _\mu ( \cdot , v ^\prime , \tau ^\prime ) \tau _i ^\mu + v ^{\prime
    i} g _i ( \cdot , v , \tau )
\end{multline}
Integrating over $ K \Subset U $ and applying the divergence theorem gives
\begin{multline}
  \label{eqn:recipInt}
  \int _{ \partial K } v ^i \tau _i ^{ \prime \mu } \,\mathrm{d}^{m-1} x_\mu  - \int _K \bigl[ \psi ^i _\mu ( \cdot , v, \tau ) \tau _i ^{ \prime \mu } - v ^i g _i ^\prime ( \cdot , v ^\prime , \tau ^\prime ) \bigr] \,\mathrm{d}^m x \\
  = \int _{ \partial K }  v ^{\prime i} \tau _i ^\mu \,\mathrm{d}^{m-1} x_\mu  - \int _K \bigl[ \psi ^{\prime i}
  _\mu ( \cdot , v ^\prime , \tau ^\prime ) \tau _i ^\mu + v ^{\prime
    i} g _i ( \cdot , v , \tau )\bigr] \,\mathrm{d}^m x .
\end{multline}
The equations \eqref{eqn:recipDiff} and \eqref{eqn:recipInt} are the
differential and integral forms, respectively, of the reciprocity law
for a multisymplectic system of PDEs. They may be interpreted as
describing a symmetric (or ``reciprocal'') relationship between the
perturbation of the system by incremental sources and the incremental
response of the system to such perturbations. In the special case
where the incremental sources vanish, we recover the multisymplectic
conservation law.

\begin{example}[semilinear elliptic PDE, continued]
  Let us once again examine the semilinear elliptic PDEs considered in
  \autoref{ex:semilinearIntro}, \autoref{ex:semilinearMSCLdiff}, and
  \autoref{ex:semilinearMSCLint}. The linearized problem with
  incremental sources is
  \begin{equation*}
    \partial _\mu v = a _{ \mu \nu } \tau ^\nu + \psi _\mu (\cdot, v, \tau ) , \qquad - \partial _\mu \tau ^\mu = \frac{ \partial ^2 F }{ \partial u ^2 } v + g (\cdot , v , \tau ) .
  \end{equation*}
  To see how reciprocity arises, we compute
  \begin{align*}
    \partial _\mu ( v  \tau ^{ \prime \mu } ) &= \bigl[ a _{ \mu \nu } \tau ^\nu + \psi _\mu  (\cdot , v, \tau ) \bigr]   \tau ^{ \prime \mu } - v \biggl[ \frac{ \partial ^2 F }{ \partial u ^2 } v ^\prime + g ^\prime ( \cdot , v ^\prime , \tau ^\prime ) \biggr],\\
    \partial _\mu ( v ^\prime \tau ^\mu ) &= \bigl[ a _{ \mu \nu } \tau ^{\prime \nu} + \psi ^\prime _\mu  (\cdot , v ^\prime , \tau ^\prime ) \bigr]   \tau ^\mu - v ^\prime \biggl[ \frac{ \partial ^2 F }{ \partial u ^2 } v + g ( \cdot , v , \tau ) \biggr] .
  \end{align*}
  Subtracting, the terms involving $a$ and $F$ cancel by symmetry,
  yielding
  \begin{multline*}
    \partial _\mu ( v \tau ^{ \prime \mu } ) - \psi _\mu ( \cdot , v, \tau ) \tau ^{ \prime \mu } + v g ^\prime ( \cdot , v ^\prime , \tau ^\prime ) \\
    = \partial _\mu ( v ^\prime \tau ^\mu ) - \psi _\mu ^\prime (
    \cdot , v ^\prime , \tau ^\prime ) \tau ^\mu + v ^\prime g ( \cdot
    , v , \tau ),
  \end{multline*}
  which is precisely the statement \eqref{eqn:recipDiff}. Integrating
  this over $ K \Subset U $ gives
  \begin{multline*}
    \int _{ \partial K } v \tau ^{ \prime \mu } \,\mathrm{d}^{m-1} x_\mu  - \int _{ \partial K } \bigl[ \psi _\mu ( \cdot , v, \tau ) \tau ^{ \prime \mu } - v g ^\prime ( \cdot , v ^\prime , \tau ^\prime ) \bigr] \,\mathrm{d}^m x \\
    = \int _{ \partial K } v ^\prime \tau ^\mu \,\mathrm{d}^{m-1}
    x_\mu - \int _K \bigl[ \psi _\mu ^\prime ( \cdot , v ^\prime ,
    \tau ^\prime ) \tau ^\mu - v ^\prime g ( \cdot , v , \tau ) \bigr]
    \,\mathrm{d}^{m-1} x_\mu ,
  \end{multline*}
  which is the statement \eqref{eqn:recipInt}.

  As an important special case, which arises in the primal (or
  Lagrangian) formulation of this system, suppose that $ \psi = 0 $
  and that $ g = g ( \cdot , v ) $, so that
  $ \tau = a \operatorname{grad} v $ and $v$ solves the second-order
  equation
  \begin{equation*}
    - \operatorname{div} ( a \operatorname{grad} v ) = \frac{ \partial ^2 F }{ \partial u ^2 } v + g ( \cdot , v ) .
  \end{equation*}
  If the corresponding properties hold for $ ( v ^\prime , \tau ^\prime ) $,
  then we can write the reciprocity law as 
  \begin{equation*}
    \operatorname{div} ( v \tau ^\prime ) + v g ^\prime ( \cdot , v ^\prime ) = \operatorname{div} ( v ^\prime \tau ) + v ^\prime g ( \cdot , v ) ,
  \end{equation*}
  whose integral form on $ K \Subset U $ is
  \begin{equation*}
    \int _{ \partial K } v \tau ^\prime \cdot \mathbf{n} + \int _K v g ^\prime ( \cdot , v ^\prime ) = \int _{ \partial K } v ^\prime \tau \cdot \mathbf{n} + \int _K v ^\prime g ( \cdot , v ) ,
  \end{equation*}
  As a final specialization, let
  $ a ^{ \mu \nu } \equiv \delta ^{ \mu \nu } $ and
  $ F ( x, u ) = f (x) u $, so that $u$ satisfies Poisson's equation,
  $ - \Delta u = f $, and $v$ and $ v ^\prime $ satisfy
  \begin{equation*}
    - \Delta v = g ( \cdot , v ) , \qquad - \Delta v ^\prime = g ^\prime ( \cdot , v ^\prime ) .
  \end{equation*}
  Then the reciprocity law, in differential form, is
  \begin{equation*}
    \operatorname{div} ( v \tau ^\prime ) - v \Delta v ^\prime = \operatorname{div} ( v ^\prime \tau ) - v ^\prime \Delta v ,
  \end{equation*}
  while the integral form on $ K \Subset U $ is
  \begin{equation*}
    \int _{ \partial K } v \tau ^\prime \cdot \mathbf{n} - \int _K v \Delta v ^\prime = \int _{ \partial K } v ^\prime \tau \cdot \mathbf{n} - \int _K v ^\prime \Delta v .
  \end{equation*}
  These last two expressions are two of Green's identities from vector
  calculus. If $v$ and $ v ^\prime $ are interpreted as scalar
  potentials for the electrostatic fields $ \tau $ and
  $ \tau ^\prime $, respectively, then this corresponds to Green's
  reciprocity.
\end{example}

\section{The flux formulation and multisymplecticity}
\label{sec:flux}

\subsection{Domain decomposition and the flux formulation}
\label{sec:fluxFormulation}

In this section, we introduce a weak formulation of the problem
\eqref{eqn:pdeSystem}, called the \emph{flux formulation}. This
decomposes the problem on $U$ into a collection of local solvers for
$ ( u, \sigma ) $, coupled through the approximate boundary traces
$ ( \widehat{ u }, \widehat{ \sigma } ) $. This forms the foundation
of the HDG framework of \citet{CoGoLa2009} and is closely related to
the unified {DG} framework of \citet{ArBrCoMa2001}.

We mention that our presentation of the flux formulation, and of HDG
methods, differs from that in \citet{CoGoLa2009} in a few ways. In
particular, \citeauthor{CoGoLa2009} focus on linear elliptic problems,
which allows them to make substantial use of the solution theory for
such problems, including well-posedness of the local and global
solvers. By contrast, we are interested in obtaining
multisymplecticity criteria for the much more general class of systems
\eqref{eqn:pdeSystem}, without assuming anything about the properties
of solutions, even their existence and/or uniqueness.

In this section, the function spaces appearing in the flux formulation
may be either infinite-dimensional Hilbert spaces or
finite-dimensional subspaces (e.g., polynomials up to some
degree). This will allow us to prove general multisymplecticity
results that apply both to the original, infinite-dimensional problem
(\autoref{sec:exactSolutions}) and to finite-element approximation via
HDG methods (\autoref{sec:hdg}).

To begin, observe that if $ ( u, \sigma ) $ is a smooth solution to
\eqref{eqn:pdeSystem} on $U$, and if $ ( v, \tau ) $ are arbitrary
smooth test functions, then
\begin{align*}
  \int _U \partial _\mu u ^i \tau _i ^\mu \,\mathrm{d}^m x &= \int _U \phi ^i _\mu \tau _i ^\mu \,\mathrm{d}^m x, \\
  - \int _U \partial _\mu \sigma _i ^\mu v ^i \,\mathrm{d}^m x &= \int _U f _i v ^i \,\mathrm{d}^m x .
\end{align*}
If $ \mathcal{T} _h $ is a partition of $U$ into non-overlapping
domains $ K \in \mathcal{T} _h $, then breaking each of the integrals
above into a sum over $ K \in \mathcal{T} _h $ and integrating by
parts,
\begin{subequations}
  \label{eqn:globalWeakProblem}
\begin{align}
  \sum _{K \in \mathcal{T} _h } \int _{ \partial K } u ^i \tau _i ^\mu \,\mathrm{d}^{m-1} x_\mu  &= \sum _{K \in \mathcal{T} _h } \int _K  ( u ^i \partial _\mu \tau _i ^\mu + \phi ^i _\mu \tau _i ^\mu ) \,\mathrm{d}^m x ,\\
  \sum _{K \in \mathcal{T} _h }  \int _{ \partial K } \sigma _i ^\mu v ^i  \,\mathrm{d}^{m-1} x_\mu  &= \sum _{K \in \mathcal{T} _h } \int _K ( \sigma _i ^\mu \partial _\mu v ^i - f _i v ^i  ) \,\mathrm{d}^m x .
\end{align}
\end{subequations}
In a typical finite-element application, $U$ will be polyhedral, and
$ \mathcal{T} _h $ will be a triangulation of $U$ into simplices
$ K \in \mathcal{T} _h $.

Following \citet{CoGoLa2009} (as well as \citet{ArBrCoMa2001}), we now
relax the regularity and inter-element continuity assumptions on
$ u , v $ and $ \sigma, \tau $ in \eqref{eqn:globalWeakProblem}, and
we replace the boundary traces of $u$ and $ \sigma $ on $ \partial K $
by approximate traces $ \widehat{ u } $ and $ \widehat{ \sigma }
$. Specifically, let
\begin{equation*}
  V (K) \subset \bigl[ H ^2 (K) \bigr] ^n , \qquad \Sigma (K) \subset \bigl[ H ^1 (K) \bigr] ^{ m n } ,
\end{equation*}
be specified local function spaces on each $K \in \mathcal{T} _h $,
and define discontinuous global spaces on $U$ by
\begin{alignat*}{2}
  V  &\coloneqq \bigl\{ v \in \bigl[ L ^2 (U) \bigr] ^n &: v \rvert _K \in V (K) ,\ \forall K \in \mathcal{T} _h \bigr\} &= \prod _{ K \in \mathcal{T} _h } V (K) , \\
  \Sigma &\coloneqq \bigl\{ \tau  \in \bigl[ L ^2 (U) \bigr] ^{m n} &: \tau  \rvert _K \in \Sigma  (K) , \ \forall  K \in \mathcal{T} _h \bigr\} &= \prod _{ K \in \mathcal{T} _h } \Sigma  (K).
\end{alignat*}
Next, specify a space of approximate traces of functions in $V$,
\begin{equation*}
  \widehat{ V } \subset \bigl[ L ^2 ( \mathcal{E} _h ) \bigr] ^n ,
\end{equation*}
where
$ \mathcal{E} _h \coloneqq \bigcup _{ K \in \mathcal{T} _h } \partial
K $, along with the subspace
\begin{equation*}
  \widehat{ V } _0 \coloneqq \bigl\{ \widehat{ v } \in \widehat{ V } : \widehat{ v } \rvert _{ \partial U } = 0 \bigr\}
\end{equation*} 
of approximate traces vanishing on the domain boundary $ \partial U $.

The final ingredient in the HDG framework is the \emph{numerical flux}
$ \widehat{ \sigma } $, which we define slightly differently to
\citet{CoGoLa2009}. As mentioned in the introduction to this section,
our treatment is equivalent to \citep{CoGoLa2009} for linear problems,
but it extends more naturally to nonlinear problems, even without
assuming existence and uniqueness of solutions. Let
\begin{equation*}
  \widehat{ \Sigma } ( \partial K ) \subset \bigl[ L ^2 (\partial K )
  \bigr] ^{ m n }
\end{equation*}
be some space of boundary fluxes on $ \partial K $, and define the
space of restricted traces
$ \widehat{ V } ( \partial K ) \coloneqq \bigl\{ \widehat{ v } \rvert
_{ \partial K } : \widehat{ v } \in \widehat{ V } \bigr\} $. 

\begin{definition}
  A \emph{local flux function} on $ K \in \mathcal{T} _h $ is a bounded
  linear map
  \begin{equation*}
    \Phi _K  \colon V (K) \times \Sigma (K) \times \widehat{ V } ( \partial K ) \times \widehat{ \Sigma } ( \partial K ) \rightarrow \bigl[ L ^2 ( \partial K ) \bigr] ^{ m n } .
  \end{equation*}
  Denoting
  $ \widehat{ \Sigma } \coloneqq \prod _{ K \in \mathcal{T} _h }
  \widehat{ \Sigma } ( \partial K ) \subset \prod _{ K \in \mathcal{T}
    _h } \bigl[ L ^2 ( \partial K ) \bigr] ^{ m n } $, this extends
  naturally to a \emph{global flux function},
  \begin{equation*}
    \Phi \colon V \times \Sigma \times \widehat{ V } \times \widehat{ \Sigma } \rightarrow \prod _{ K \in \mathcal{T} _h } \bigl[ L ^2 ( \partial K ) \bigr] ^{ m n } .
  \end{equation*} 
\end{definition}

\begin{remark}
  \label{rmk:doubleValued}
  Elements of $ \widehat{ \Sigma } $ and
  $ \prod _{ K \in \mathcal{T} _h } \bigl[ L ^2 ( \partial K ) \bigr]
  ^{ m n } $ may be interpreted as functions that are double-valued on
  internal facets of $ \mathcal{T} _h $ and single-valued on boundary
  facets in $ \partial U $.
\end{remark}

We now seek solutions
$ ( u, \sigma , \widehat{ u } , \widehat{ \sigma } ) \in V \times
\Sigma \times \widehat{ V } \times \widehat{ \Sigma } $ satisfying
\begin{subequations}
\label{eqn:fluxHDG}  
\begin{alignat}{2}
  \label{eqn:localSolverA}
  \int _{ \partial K } \widehat{ u } ^i \tau _i ^\mu \,\mathrm{d}^{m-1} x_\mu &= \int _K ( u ^i \partial _\mu \tau _i ^\mu + \phi ^i _\mu \tau _i ^\mu ) \,\mathrm{d}^m x , \qquad & \forall \tau &\in \Sigma (K) ,\\
  \label{eqn:localSolverB}
  \int _{ \partial K } \widehat{ \sigma } _i ^\mu v ^i \,\mathrm{d}^{m-1} x_\mu &= \int _K ( \sigma _i ^\mu \partial _\mu v ^i - f _i v ^i ) \,\mathrm{d}^m x , \qquad &\forall v &\in V (K) ,\\
  \label{eqn:numericalFlux}
  \mathclap{\int _{ \partial K } \Phi ^i _\mu  ( u , \sigma , \widehat{ u } , \widehat{ \sigma } )  \widehat{ \tau } _i ^\mu \mathbf{n} ^\nu \,\mathrm{d}^{m-1} x_\nu = 0,}  &&\forall \widehat{ \tau } &\in \widehat{ \Sigma } ( \partial K ) ,
\end{alignat}
for all $ K \in \mathcal{T} _h $, together with the
\emph{conservativity condition},
\begin{equation}
  \label{eqn:conservativity}
  \sum _{ K \in \mathcal{T} _h } \int _{ \partial K } \widehat{ \sigma } _i ^\mu \widehat{ v } ^i \,\mathrm{d}^{m-1} x_\mu = 0 , \qquad \forall \widehat{ v } \in \widehat{ V } _0 ,
\end{equation}
\end{subequations}
the latter of which states that the normal component of
$ \widehat{ \sigma } $ is single-valued (at least in a weak sense) on
the internal facets of $ \mathcal{T} _h $.

In the language of \citet{CoGoLa2009}, we have ``local solvers''
\eqref{eqn:localSolverA}--\eqref{eqn:localSolverB} on each
$ K \in \mathcal{T} _h $, and these are coupled globally through the
numerical flux $ \widehat{ \sigma } $ by the conservativity condition
\eqref{eqn:conservativity}. A notable distinction between our approach
and that of \citet{CoGoLa2009} is that they assume
$ \widehat{ \sigma } = \widehat{ \sigma } ( u, \sigma , \widehat{ u }
) $ is a given function of $ u $, $ \sigma $, and $ \widehat{ u } $ on
each $ K \in \mathcal{T} _h $, whereas we define it through the flux
functions $ \Phi _K $ by adding \eqref{eqn:numericalFlux} to the flux
formulation.

\begin{definition}
  The \emph{flux formulation} of \eqref{eqn:pdeSystem} on
  $ \mathcal{T} _h $ is given by \eqref{eqn:fluxHDG}, along with
  choices of the global function space $ \widehat{ V } $ and, for each
  $ K \in \mathcal{T} _h $, the local function spaces $ V (K) $,
  $ \Sigma (K) $, $ \widehat{ \Sigma } ( \partial K ) $ and the flux
  function $ \Phi _K $. We call this a \emph{hybridizable
    discontinuous Galerkin (HDG) method} whenever
  $ \widehat{ V } _0 $, $V$, and $\Sigma$ (but not necssarily
  $ \widehat{ V } $ or $ \widehat{ \Sigma } $) are finite dimensional.
\end{definition}

\begin{remark}
  Note that \eqref{eqn:fluxHDG} does not impose any particular
  boundary conditions on $ \widehat{ u } \rvert _{ \partial U } $ or
  $ \widehat{ \sigma } \rvert _{ \partial U } $. Hence,
  \eqref{eqn:fluxHDG} corresponds to the system of PDEs
  \eqref{eqn:pdeSystem} rather than a particular boundary value
  problem associated to \eqref{eqn:pdeSystem}.

  We remain agnostic about the choice of boundary conditions for two
  reasons.  First, multisymplecticity is not a statement about a
  particular solution, but a statement about variations within a
  general family of solutions. If we pick out an isolated solution
  (e.g., by the imposition of boundary conditions), then the
  ``family'' of solutions becomes zero-dimensional, so any statement
  about variations is vacuous.  Second, the class of PDEs
  \eqref{eqn:pdeSystem} is quite general, including both elliptic and
  hyperbolic PDEs, among others, depending on $\phi$ and $f$. In the
  hyperbolic case, when $U$ is a spacetime region, we are not free to
  impose Dirichlet conditions on all of $ \partial U $.
\end{remark}

\subsection{Local multisymplecticity criteria}
In the context of smooth solutions to \eqref{eqn:pdeSystem}, where
$ \phi ^i _\mu $ and $f _i $ are smooth functions on
$ U \times \mathbb{R}^n \times \mathbb{R} ^{ m n } $,
\autoref{lem:multisymplecticClosed} states that multisymplecticity
holds if and only if the smooth $1$-form
$ \phi ^i _\mu \,\mathrm{d}\sigma _i ^\mu + f _i \,\mathrm{d}u ^i $ is
closed for all $ x \in U $. For the flux formulation
\eqref{eqn:fluxHDG}, we wish to relax these smoothness assumptions and
express the multisymplecticity condition in terms of function spaces,
rather than in a pointwise sense at each $ x \in U $.

Observe that \eqref{eqn:localSolverA}--\eqref{eqn:localSolverB} still
makes sense even if we only have $ \phi ^i _\mu , f _i \in L ^2 (U) $
for $ \mu = 1 , \ldots, m $ and $ i = 1, \ldots, n $. Therefore,
rather than assuming that $\phi$ and $f$ are smooth, let us assume
only that
\begin{equation*}
  \phi _K \colon V (K) \times \Sigma (K) \rightarrow \bigl[ L ^2 (K) \bigr] ^{ m n } , \qquad f _K \colon V (K) \times \Sigma (K) \rightarrow \bigl[ L ^2 (K) \bigr] ^n ,
\end{equation*}
for each $ K \in \mathcal{T} _h $, which may be extended naturally to
\begin{equation*}
  \phi \colon V \times \Sigma \rightarrow \bigl[ L ^2 (U) \bigr] ^{ m n } , \qquad f \colon V \times \Sigma \rightarrow \bigl[ L ^2 (U) \bigr] ^n .
\end{equation*}
These are generally nonlinear maps---and since multisymplecticity is a
statement about first variations of solutions, let us assume also that
these maps are at least $ C ^1 $. When $ V \times \Sigma $ is
infinite-dimensional, we may interpret this as a variational
derivative (either the G\^ateaux or Fr\'echet derivative, which are
equivalent for $ C ^1 $ maps, cf.~\citet[Corollary 2.10]{AbMaRa1988});
in the finite-dimensional case, this is just ordinary continuous
differentiability.  With $ \phi $ and $f$ defined in this way, it
follows that
$ \phi ^i _\mu \,\mathrm{d}\sigma _i ^\mu + f _i \,\mathrm{d}u ^i $ is
a $ C ^1 $ differential $1$-form on $ V \times \Sigma $, and we say
that this $1$-form is closed if its exterior derivative vanishes as a
$2$-form on $ V \times \Sigma $.

\begin{definition}
  The flux formulation \eqref{eqn:fluxHDG} is \emph{multisymplectic}
  if solutions satisfy
  \begin{equation}
    \label{eqn:msclHDG}
    \int _{ \partial K } (\mathrm{d} \widehat{ u } ^i \wedge \mathrm{d} \widehat{ \sigma } _i ^\mu) \,\mathrm{d}^{m-1} x_\mu = 0 ,
  \end{equation}
  for all $ K \in \mathcal{T} _h $, whenever the $ C ^1 $ differential
  $1$-form
  $ \phi ^i _\mu \,\mathrm{d}\sigma _i ^\mu + f _i \,\mathrm{d}u ^i $
  is closed.
\end{definition}

\begin{remark}
  The condition \eqref{eqn:msclHDG} is essentially the integral form
  of the multisymplectic conservation law from
  \autoref{sec:msclIntegral}, where the approximate traces
  $ \widehat{ u } $ and $ \widehat{ \sigma } $ are used instead of the
  actual traces of $u$ and $ \sigma $. Note that \eqref{eqn:msclHDG}
  only needs to hold for $ K \in \mathcal{T} _h $, not for arbitrary
  subdomains $ K \Subset U $ as in \eqref{eqn:msclIntegral}.

  We say that \eqref{eqn:msclHDG} is a \emph{local} multisymplecticity
  condition because it is a statement only about the local solvers
  \eqref{eqn:localSolverA}--\eqref{eqn:localSolverB} and numerical
  flux condition \eqref{eqn:numericalFlux} for each
  $ K \in \mathcal{T} _h $. We reserve the \emph{global}
  question---whether the multisymplectic conservation law holds for
  arbitrary unions of elements of $ \mathcal{T} _h $---for the next
  section, where the conservativity condition
  \eqref{eqn:conservativity} will also come into play.
\end{remark}

To characterize the multisymplecticity of the flux formulation, we
first prove a useful lemma, which relates the boundary integral in
\eqref{eqn:msclHDG} to the ``jumps'' $ \widehat{ u } - u $ and
$ \widehat{ \sigma } - \sigma $ between the approximate and actual
traces on $ \partial K $.

\begin{lemma}
  \label{lem:jump}
  If
  $ \phi ^i _\mu \,\mathrm{d}\sigma _i ^\mu + f _i \,\mathrm{d}u ^i $
  is closed and
  $ ( u, \sigma, \widehat{ u } , \widehat{ \sigma } ) \in V \times
  \Sigma \times \widehat{ V } \times \widehat{ \Sigma } $ satisfies
  \eqref{eqn:localSolverA}--\eqref{eqn:localSolverB} for
  $ K \in \mathcal{T} _h $, then
  \begin{equation*}
    \int _{ \partial K } (\mathrm{d} \widehat{ u } ^i \wedge \mathrm{d} \widehat{ \sigma } _i ^\mu )\,\mathrm{d}^{m-1} x_\mu = \int _{ \partial K } \bigl[ \mathrm{d} ( \widehat{ u } ^i - u ^i ) \wedge \mathrm{d} ( \widehat{ \sigma } _i ^\mu - \sigma _i ^\mu ) \bigr] \,\mathrm{d}^{m-1} x_\mu .
  \end{equation*}
  Consequently, the local multisymplecticity condition
  \eqref{eqn:msclHDG} holds if and only if
  \begin{equation}
    \label{eqn:msclJump}
    \int _{ \partial K } \bigl[ \mathrm{d} ( \widehat{ u } ^i - u ^i ) \wedge \mathrm{d} ( \widehat{ \sigma } _i ^\mu - \sigma _i ^\mu ) \bigr] \,\mathrm{d}^{m-1} x_\mu = 0 .
  \end{equation} 
\end{lemma}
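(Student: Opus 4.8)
The plan is to prove the displayed identity first; the ``consequently'' statement is then immediate, since the left-hand side of the identity is exactly the integrand of \eqref{eqn:msclHDG}, so that \eqref{eqn:msclHDG} holds precisely when the right-hand side vanishes, which is \eqref{eqn:msclJump}. Expanding the wedge on the right-hand side and cancelling the common term $\mathrm{d}\widehat u^i\wedge\mathrm{d}\widehat\sigma^\mu_i$, the identity reduces to the claim that
\[
\int_{\partial K}\bigl(\mathrm{d}\widehat u^i\wedge\mathrm{d}\sigma^\mu_i + \mathrm{d}u^i\wedge\mathrm{d}\widehat\sigma^\mu_i - \mathrm{d}u^i\wedge\mathrm{d}\sigma^\mu_i\bigr)\,\mathrm{d}^{m-1}x_\mu = 0 .
\]
So it suffices to rewrite each of these three boundary integrals as an integral over $K$ and check that they combine to something that vanishes under the closedness hypothesis.

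The main tool is the exterior derivative of the local solvers \eqref{eqn:localSolverA}--\eqref{eqn:localSolverB}, interpreted (as throughout the paper) as identities among differential forms evaluated on variations of solutions. Differentiating \eqref{eqn:localSolverA} with the test function $\tau\in\Sigma(K)$ held fixed gives a $1$-form identity; the crucial observation is that, because $\Sigma(K)$ is a \emph{linear} space, the variation $\mathrm{d}\sigma$ takes its values in $\Sigma(K)$ and is therefore itself an admissible test function. Substituting $\tau\rightsquigarrow\mathrm{d}\sigma$ (placed to the right of each factor, so that products become wedge products) turns the linearized \eqref{eqn:localSolverA} into
\[
\int_{\partial K}\mathrm{d}\widehat u^i\wedge\mathrm{d}\sigma^\mu_i\,\mathrm{d}^{m-1}x_\mu = \int_K\bigl(\mathrm{d}u^i\wedge\partial_\mu\mathrm{d}\sigma^\mu_i + \mathrm{d}\phi^i_\mu\wedge\mathrm{d}\sigma^\mu_i\bigr)\,\mathrm{d}^m x .
\]
Likewise, linearizing \eqref{eqn:localSolverB} and substituting $v\rightsquigarrow\mathrm{d}u\in V(K)$ expresses $\int_{\partial K}\mathrm{d}u^i\wedge\mathrm{d}\widehat\sigma^\mu_i$ as an interior integral involving $\mathrm{d}\sigma^\mu_i\wedge\partial_\mu\mathrm{d}u^i$ and $\mathrm{d}f_i\wedge\mathrm{d}u^i$, up to a sign coming from the antisymmetry of the wedge. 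Throughout, I would use that $\partial_\mu$ commutes with $\mathrm{d}$ and obeys the Leibniz rule for $\wedge$ on variations.

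The third boundary integral $\int_{\partial K}\mathrm{d}u^i\wedge\mathrm{d}\sigma^\mu_i$ involves only the (variations of the) interior traces, so I would convert it to $\int_K\partial_\mu(\mathrm{d}u^i\wedge\mathrm{d}\sigma^\mu_i)\,\mathrm{d}^m x$ by the divergence theorem and expand with the Leibniz rule. Adding the three interior integrals, every term containing a spatial derivative $\partial_\mu\mathrm{d}u^i$ or $\partial_\mu\mathrm{d}\sigma^\mu_i$ cancels in pairs (this is where the wedge signs must be tracked with care), and the surviving terms are exactly
\[
\int_K\bigl(\mathrm{d}\phi^i_\mu\wedge\mathrm{d}\sigma^\mu_i + \mathrm{d}f_i\wedge\mathrm{d}u^i\bigr)\,\mathrm{d}^m x = \int_K\mathrm{d}\bigl(\phi^i_\mu\,\mathrm{d}\sigma^\mu_i + f_i\,\mathrm{d}u^i\bigr)\,\mathrm{d}^m x .
\]
By hypothesis the $1$-form $\phi^i_\mu\,\mathrm{d}\sigma^\mu_i + f_i\,\mathrm{d}u^i$ is closed, so this exterior derivative vanishes identically and the integral is zero, completing the proof of the identity (and hence of the lemma).

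I expect the main obstacle to be organizational rather than analytic: correctly interpreting the exterior derivative of the weak local solvers as $1$-form identities, justifying the substitution of the variations $\mathrm{d}\sigma,\mathrm{d}u$ into the test slots (which relies on linearity of $\Sigma(K),V(K)$ and on the $C^1$ regularity assumed of $\phi,f$), and keeping all wedge-antisymmetry signs consistent so that the derivative terms cancel. The conceptual crux---and the only place the hypothesis is used---is recognizing that the leftover interior integrand is precisely $\mathrm{d}(\phi^i_\mu\,\mathrm{d}\sigma^\mu_i + f_i\,\mathrm{d}u^i)$, so that closedness delivers the result.
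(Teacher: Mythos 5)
Your proposal is correct and takes essentially the same approach as the paper's proof: substitute the form-valued variations $\mathrm{d}\sigma \in \Sigma(K)$, $\mathrm{d}u \in V(K)$ into the test slots of \eqref{eqn:localSolverA}--\eqref{eqn:localSolverB}, take exterior derivatives, use the divergence theorem on $\partial_\mu(\mathrm{d}u^i \wedge \mathrm{d}\sigma_i^\mu)$, and invoke closedness of $\phi^i_\mu\,\mathrm{d}\sigma_i^\mu + f_i\,\mathrm{d}u^i$ to cancel the remaining interior terms. The only differences are organizational --- the paper substitutes the forms as test functions first and then differentiates (rather than linearizing first), and it derives the three-term boundary identity before expanding the jump product rather than after --- and these are immaterial since the equations are linear in the test functions.
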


\begin{proof}
  Since \eqref{eqn:localSolverA} and \eqref{eqn:localSolverB} hold for
  all $ v \in V (K) $ and $ \tau \in \Sigma (K) $, we have
  \begin{align*}
    \int _{ \partial K } (\widehat{ u } ^i \,\mathrm{d} \sigma  _i ^\mu) \,\mathrm{d}^{m-1} x_\mu &= \int _K \bigl[  u ^i \,\mathrm{d} ( \partial _\mu \sigma ^\mu _i) + \phi ^i _\mu \,\mathrm{d} \sigma  _i ^\mu \bigr]  \,\mathrm{d}^m x, \\
    \int _{ \partial K } ( \widehat{ \sigma } _i ^\mu \,\mathrm{d} u ^i )
    \,\mathrm{d}^{m-1} x_\mu &= \int _K \bigl[  \sigma _i ^\mu \,\mathrm{d} (\partial _\mu
 u ^i ) - f _i \,\mathrm{d} u ^i ) \,\mathrm{d}^m x ,
  \end{align*}
  so taking exterior derivatives gives
  \begin{align*}
    \int _{ \partial K } (\mathrm{d} \widehat{ u } ^i \wedge \mathrm{d} \sigma  _i ^\mu) \,\mathrm{d}^{m-1} x_\mu &= \int _K \bigl[  \mathrm{d} u ^i \wedge \mathrm{d} (\partial _\mu \sigma ^\mu _i) + \mathrm{d} \phi ^i _\mu \wedge \mathrm{d} \sigma  _i ^\mu \bigr]  \,\mathrm{d}^m x, \\
    \int _{ \partial K } ( \mathrm{d} \widehat{ \sigma } _i ^\mu \wedge \mathrm{d} u ^i )
    \,\mathrm{d}^{m-1} x_\mu &= \int _K \bigl[  \mathrm{d} \sigma _i ^\mu \wedge \mathrm{d} (\partial _\mu u ^i ) - \mathrm{d} f _i \wedge \mathrm{d} u ^i ) \,\mathrm{d}^m x .
  \end{align*}
  Subtracting the second equation from the first, the terms involving
  $ \phi $ and $f$ vanish by the closedness assumption, so we are left
  with
\begin{multline*} 
  \int _{ \partial K } ( \mathrm{d} \widehat{ u } ^i \wedge \mathrm{d} \sigma _i ^\mu + \mathrm{d} u ^i \wedge \mathrm{d} \widehat{ \sigma } _i ^\mu ) \,\mathrm{d}^{m-1} x_\mu \\
 \begin{aligned}
  &= \int _K \bigl[ \mathrm{d} u ^i \wedge \mathrm{d} ( \partial _\mu \sigma _i ^\mu ) + \mathrm{d} ( \partial _\mu u ^i ) \wedge \mathrm{d} \sigma _i ^\mu \bigr] \,\mathrm{d}^m x \\
  &= \int _K \partial _\mu ( \mathrm{d} u ^i \wedge \mathrm{d} \sigma _i ^\mu ) \,\mathrm{d}^m x \\
  &= \int _{ \partial K } (\mathrm{d} u ^i \wedge \mathrm{d} \sigma _i ^\mu ) \,\mathrm{d}^{m-1} x_\mu ,
\end{aligned}
\end{multline*}
that is,
\begin{equation*}
\int _{ \partial K } ( \mathrm{d} \widehat{ u } ^i \wedge \mathrm{d} \sigma _i ^\mu + \mathrm{d} u ^i \wedge \mathrm{d} \widehat{ \sigma } _i ^\mu - \mathrm{d} u ^i \wedge \mathrm{d} \sigma _i ^\mu ) \,\mathrm{d}^{m-1} x_\mu = 0 .
\end{equation*} 
Using this identity, we finally calculate
\begin{multline*}
  \int _{ \partial K } \bigl[ \mathrm{d} ( \widehat{ u } ^i - u ^i ) \wedge \mathrm{d} ( \widehat{ \sigma } _i ^\mu - \sigma _i ^\mu  )  \bigr] \,\mathrm{d}^{m-1} x_\mu \\
  \begin{aligned}
    &=\int _{ \partial K } ( \mathrm{d} \widehat{ u } ^i \wedge \mathrm{d} \widehat{ \sigma } _i ^\mu - \mathrm{d} \widehat{ u } ^i \wedge \mathrm{d} \sigma _i ^\mu - \mathrm{d} u ^i \wedge \mathrm{d} \widehat{ \sigma } _i ^\mu + \mathrm{d} u ^i \wedge \mathrm{d} \sigma _i ^\mu ) \,\mathrm{d}^{m-1} x_\mu\\
    &=     \int _{ \partial K } (\mathrm{d} \widehat{ u } ^i \wedge \mathrm{d} \widehat{ \sigma } _i ^\mu )\,\mathrm{d}^{m-1} x_\mu ,
  \end{aligned}
\end{multline*}
which completes the proof.
\end{proof}

The equation \eqref{eqn:msclJump} says that the multisymplecticity of
the flux formulation depends entirely on the relationship among $ u $,
$\sigma$, $ \widehat{ u } $, and $ \widehat{ \sigma } $ on
$ \partial K $ for $K \in \mathcal{T} _h $. That is, it depends
entirely on the choice of local flux functions $ \Phi _K $.

\begin{definition}
  \label{def:multisymplecticFlux}
  A local flux function $ \Phi _K $ is \emph{multisymplectic} if
  \eqref{eqn:msclJump} holds whenever
  $ ( u, \sigma , \widehat{ u }, \widehat{ \sigma } ) \in V \times
  \Sigma \times \widehat{ V } \times \widehat{ \Sigma } $ satisfies
  \eqref{eqn:numericalFlux}.
\end{definition}

We now prove multisymplecticity for two particular choices of
$ \Phi _K $. The first is used for the hybridized Raviart--Thomas
(RT-H), Brezzi--Douglas--Marini (BDM-H), and local discontinuous
Galerkin (LDG-H) methods; the second is used for the hybridized
continuous Galerkin (CG-H) and nonconforming (NC-H) methods. These
methods will be discussed further in \autoref{sec:hdg}.

\begin{theorem}
  \label{thm:ldgFlux}
  Suppose that, for all $ v \in V (K) $ and
  $ \widehat{ v } \in \widehat{ V } $, there exists
  $ \widehat{ \tau } \in \widehat{ \Sigma } ( \partial K ) $ such that
  $ \widehat{ \tau } _i ^\mu \mathbf{n} _\mu = \delta _{ i j }
  (\widehat{ v } ^j - v ^j )\rvert _{ \partial K } $ for all
  $ i = 1 , \ldots, n $. Then, for any
  $ \lambda \in L ^\infty ( \partial K ) $ (which is called a
  ``penalty function''), the local flux function
  \begin{equation*}
    \Phi _K ( u , \sigma , \widehat{ u } , \widehat{ \sigma } ) = ( \widehat{ \sigma } - \sigma ) - \lambda ( \widehat{ u } - u ) \mathbf{n} 
  \end{equation*}
  is multisymplectic.
\end{theorem}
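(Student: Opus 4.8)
The plan is to reduce to the jump criterion of \autoref{lem:jump} and then exploit both the explicit algebraic form of $\Phi_K$ and the hypothesis on the normal traces of $\widehat{\Sigma}(\partial K)$. Per \autoref{def:multisymplecticFlux}, I assume $\phi^i_\mu\,\mathrm{d}\sigma_i^\mu + f_i\,\mathrm{d}u^i$ is closed and $(u,\sigma,\widehat{u},\widehat{\sigma})$ satisfies \eqref{eqn:numericalFlux}, and I must establish \eqref{eqn:msclJump}. Writing the jumps as $[u]^i \coloneqq \widehat{u}^i - u^i$ and $[\sigma]_i^\mu \coloneqq \widehat{\sigma}_i^\mu - \sigma_i^\mu$, and recalling that on $\partial K$ the form $\mathrm{d}^{m-1}x_\mu$ restricts to $\mathbf{n}_\mu$ times the induced surface measure $\mathrm{d}s$, the integrand of \eqref{eqn:msclJump} becomes $\mathrm{d}[u]^i \wedge \mathrm{d}\bigl([\sigma]_i^\mu \mathbf{n}_\mu\bigr)\,\mathrm{d}s$; in particular, only the normal component $[\sigma]_i^\mu\mathbf{n}_\mu$ of the $\sigma$-jump enters.

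Next I would substitute the defining form of the flux. Reading $\Phi_K = (\widehat{\sigma} - \sigma) - \lambda(\widehat{u} - u)\mathbf{n}$ componentwise and taking its normal component on $\partial K$ (using $\lvert\mathbf{n}\rvert = 1$) gives $[\sigma]_i^\mu\mathbf{n}_\mu = \Phi^i_\mu\mathbf{n}_\mu + \lambda[u]^i$. The penalty piece is harmless: since $\lambda$ is a fixed function on $\partial K$ we have $\mathrm{d}\lambda = 0$, so $\mathrm{d}[u]^i \wedge \mathrm{d}(\lambda[u]^i) = \lambda\,\mathrm{d}[u]^i\wedge\mathrm{d}[u]^i = 0$ by antisymmetry of $\wedge$. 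Hence \eqref{eqn:msclJump} reduces to the single identity
\begin{equation*}
  \int_{\partial K} \mathrm{d}[u]^i \wedge \mathrm{d}\bigl(\Phi^i_\mu\mathbf{n}_\mu\bigr)\,\mathrm{d}s = 0 ,
\end{equation*}
i.e., the normal component of the flux must contribute nothing to the boundary $2$-form.

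The heart of the argument---and the step I expect to be the main obstacle---is to extract this vanishing from the \emph{weak} condition \eqref{eqn:numericalFlux}. I would evaluate the $2$-form above on an arbitrary ordered pair of first variations of the solution. Because $\Phi_K$ is linear, each variation $(\dot{u},\dot{\sigma},\dot{\widehat{u}},\dot{\widehat{\sigma}})$ again satisfies \eqref{eqn:numericalFlux}, now with flux $\dot{\Phi} = (\dot{\widehat{\sigma}} - \dot{\sigma}) - \lambda(\dot{\widehat{u}} - \dot{u})\mathbf{n}$. For the first variation, the hypothesis (applied with $v = \dot{u}$ and $\widehat{v} = \dot{\widehat{u}}$) furnishes a test function $\widehat{\tau}\in\widehat{\Sigma}(\partial K)$ whose normal trace $\widehat{\tau}_i^\mu\mathbf{n}_\mu$ equals the $u$-jump $\dot{\widehat{u}}^i - \dot{u}^i$; substituting this $\widehat{\tau}$ into \eqref{eqn:numericalFlux} for the second variation shows that the normal flux component $\dot{\Phi}^i_\mu\mathbf{n}_\mu$ of the second variation pairs to zero, in $L^2(\partial K)$, against the $u$-jump of the first. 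Reversing the roles of the two variations gives the companion identity, so the $2$-form integrates to zero on every ordered pair, which is the displayed identity and hence \eqref{eqn:msclJump}. The delicate point is precisely the reconciliation of the weak, tested condition \eqref{eqn:numericalFlux} with the pointwise normal-component structure demanded by the $2$-form: it is the hypothesis---that the normal traces realizable within $\widehat{\Sigma}(\partial K)$ include every jump $(\widehat{v} - v)\rvert_{\partial K}$---that bridges this gap, by supplying a test flux matched to each variation's $u$-jump.
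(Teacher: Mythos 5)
Your proof is correct and takes essentially the same approach as the paper's: reduce to the jump criterion \eqref{eqn:msclJump} via \autoref{lem:jump}, use the hypothesis to test the flux condition \eqref{eqn:numericalFlux} against test fluxes whose normal traces realize the $u$-jumps, and kill the penalty term by the antisymmetry of $\wedge$. The only difference is organizational: the paper derives a $1$-form identity valid for arbitrary $(v,\widehat{v})$ and then takes its exterior derivative, whereas you split off the penalty term first and antisymmetrize directly over ordered pairs of variations (using linearity of $\Phi_K$ so that variations again satisfy \eqref{eqn:numericalFlux})---these are the same computation.
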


\begin{proof}
  The flux condition \eqref{eqn:numericalFlux} says that
  \begin{equation*}
    \int _{ \partial K } \delta ^{ i j } ( \widehat{ \sigma } _i ^\mu - \sigma _i ^\mu ) \mathbf{n} _\mu \widehat{ \tau } _j ^\nu \,\mathrm{d}^{m-1} x_\nu = \int _{ \partial K } \lambda ( \widehat{ u } ^i - u ^i ) \widehat{ \tau } _i ^\mu \,\mathrm{d}^{m-1} x_\mu ,
  \end{equation*}
  for all $ \widehat{ \tau } \in \widehat{ \Sigma } ( \partial K ) $.
  By assumption, for any $ v \in V (K) $ and
  $ \widehat{ v } \in \widehat{ V } $, we can choose
  $ \widehat{ \tau } \in \widehat{ \Sigma } ( \partial K ) $ such that
  $ \widehat{ \tau } _i ^\mu \mathbf{n} _\mu = \delta _{ i j }
  (\widehat{ v } ^j - v ^j ) \rvert _{ \partial K } $ for all
  $ i = 1 , \ldots, n $, and therefore
  \begin{equation*}
    \int _{ \partial K } ( \widehat{ \sigma } _i ^\mu - \sigma _i ^\mu ) ( \widehat{ v } ^i - v ^i )  \,\mathrm{d}^{m-1} x_\mu = \int _{ \partial K } \lambda \delta _{ i j } ( \widehat{ u } ^i - u ^i ) ( \widehat{ v } ^j - v ^j ) \mathbf{n} ^\mu  \,\mathrm{d}^{m-1} x_\mu .
  \end{equation*}
  Since $ v $ and $ \widehat{ v } $ are arbitrary, this can be written
  as
  \begin{equation*}
    \int _{ \partial K } \bigl[ ( \widehat{ \sigma } _i ^\mu - \sigma _i ^\mu ) \,\mathrm{d} ( \widehat{ u } ^i - u ^i ) \bigr]  \,\mathrm{d}^{m-1} x_\mu = \int _{ \partial K } \lambda \delta _{ i j } \bigl[ ( \widehat{ u } ^i - u ^i ) \,\mathrm{d} ( \widehat{ u } ^j - u ^j ) \bigr] \mathbf{n} ^\mu  \,\mathrm{d}^{m-1} x_\mu ,
  \end{equation*}
  and taking the exterior derivative of both sides yields
  \begin{multline*}
    \int _{ \partial K } \bigl[ \mathrm{d} ( \widehat{ \sigma } _i ^\mu - \sigma _i ^\mu ) \wedge \mathrm{d} ( \widehat{ u } ^i - u ^i ) \bigr]  \,\mathrm{d}^{m-1} x_\mu \\
    = \int _{ \partial K } \lambda \delta _{ i j } \bigl[ \mathrm{d} (
    \widehat{ u } ^i - u ^i ) \wedge \mathrm{d} ( \widehat{ u } ^j - u
    ^j ) \bigr] \mathbf{n} ^\mu \,\mathrm{d}^{m-1} x_\mu .
  \end{multline*}
  However, this vanishes by the symmetry of $ \delta $ and the
  antisymmetry of $\wedge $, so the multisymplecticity condition
  \eqref{eqn:msclJump} holds, as claimed.
\end{proof}

\begin{remark}
  More generally, we can replace
  $ \lambda \delta _{ i j } \in L ^\infty ( \partial K) $ above with
  penalty functions $ \lambda _{ i j } \in L ^\infty ( \partial K ) $
  such that $ \lambda _{ i j } = \lambda _{ j i } $ for
  $ i , j = 1 , \ldots, n $, and the argument above still holds. This
  same generalization applies to the penalty-based HDG methods we will
  encounter in \autoref{sec:hdg}.
\end{remark}

\begin{theorem}
  \label{thm:ncFlux}
  Suppose that, for all $ \tau \in \Sigma (K) $, there exists
  $ \widehat{ \tau } \in \widehat{ \Sigma } (\partial K) $ such that
  $ \widehat{ \tau } _i ^\mu \mathbf{n} _\mu = \tau _i ^\mu \mathbf{n}
  _\mu \rvert _{ \partial K } $ for $ i = 1 , \ldots, n $. Then the
  local flux function
  \begin{equation*}
    \Phi _K ( u , \sigma , \widehat{ u } , \widehat{ \sigma } ) = (\widehat{ u } - u ) \mathbf{n} 
  \end{equation*}
  is multisymplectic.
\end{theorem}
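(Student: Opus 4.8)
The plan is to mirror the proof of \autoref{thm:ldgFlux}: I would read a scalar identity off the flux condition \eqref{eqn:numericalFlux}, reinterpret it as a $1$-form on the space of solutions by inserting a suitable variation in place of the test field, and then take an exterior derivative to land on \eqref{eqn:msclJump}, which by \autoref{def:multisymplecticFlux} is exactly what multisymplecticity of $\Phi_K$ requires. No use of the local solvers is anticipated; the flux condition plus the hypothesis should suffice, just as in \autoref{thm:ldgFlux}.

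First I would substitute $\Phi^i_\mu=(\widehat u^i-u^i)\mathbf n_\mu$ into \eqref{eqn:numericalFlux}. Using the relation $\mathbf n_\mu\mathbf n^\nu\,\mathrm d^{m-1}x_\nu=\mathrm d^{m-1}x_\mu$ on $\partial K$, the condition collapses to
\[
  \int_{\partial K}(\widehat u^i-u^i)\,\widehat\tau^\mu_i\,\mathrm d^{m-1}x_\mu=0,\qquad\forall\,\widehat\tau\in\widehat\Sigma(\partial K),
\]
so that only the normal component $\widehat\tau^\mu_i\mathbf n_\mu$ of the test flux enters. The content is that $(\widehat u^i-u^i)$ is $L^2(\partial K)$-orthogonal to \emph{every} normal trace arising from $\widehat\Sigma(\partial K)$.

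Next I would evaluate the candidate $1$-form $\int_{\partial K}(\widehat u^i-u^i)\,\mathrm d(\widehat\sigma^\mu_i-\sigma^\mu_i)\,\mathrm d^{m-1}x_\mu$ on an arbitrary variation $(v,\tau,\widehat v,\widehat\tau)$, on which it returns $\int_{\partial K}(\widehat u^i-u^i)(\widehat\tau^\mu_i-\tau^\mu_i)\,\mathrm d^{m-1}x_\mu$. This vanishes because the normal component of $\widehat\tau-\tau$ is an admissible test: the $\widehat\tau$ contribution lies in $\widehat\Sigma(\partial K)$ outright, while the $\tau$ contribution lies there by the hypothesis of the theorem applied to $\tau\in\Sigma(K)$. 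Hence the displayed $1$-form is identically zero, and taking its exterior derivative---where $\mathrm d\circ\mathrm d=0$ annihilates the second Leibniz term---gives
\[
  \int_{\partial K}\bigl[\mathrm d(\widehat u^i-u^i)\wedge\mathrm d(\widehat\sigma^\mu_i-\sigma^\mu_i)\bigr]\,\mathrm d^{m-1}x_\mu=0,
\]
which is precisely \eqref{eqn:msclJump}.

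I expect the only genuine obstacle to be justifying that the inserted variation is admissible. Unlike \autoref{thm:ldgFlux}, where the hypothesis directly furnishes the inserted $u$-type test, here I must combine two distinct facts: that $\widehat\sigma$ already lives in $\widehat\Sigma(\partial K)$, so $\mathrm d\widehat\sigma$ automatically contributes a legal normal trace; and that $\mathrm d\sigma$ contributes a legal normal trace only through the assumption that every normal trace of $\Sigma(K)$ is matched by some element of $\widehat\Sigma(\partial K)$. Recognizing that \eqref{eqn:numericalFlux} yields orthogonality against all of $\widehat\Sigma(\partial K)$---not merely against the normal traces of $\Sigma(K)$---is what licenses inserting the full difference $\widehat\sigma-\sigma$; the remaining index bookkeeping in reducing the flux condition to its normal-component form is routine.
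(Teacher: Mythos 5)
Your proposal is correct and follows essentially the same route as the paper's proof: reduce the flux condition \eqref{eqn:numericalFlux} to the statement that $\widehat u - u$ is orthogonal on $\partial K$ to all normal traces from $\widehat\Sigma(\partial K)$, use the hypothesis to extend this orthogonality to $\widehat\tau - \tau$ for arbitrary pairs (equivalently, to the $1$-form $\int_{\partial K}(\widehat u^i - u^i)\,\mathrm d(\widehat\sigma_i^\mu - \sigma_i^\mu)\,\mathrm d^{m-1}x_\mu$ evaluated on variations), and take an exterior derivative to obtain \eqref{eqn:msclJump}. The only difference is presentational—you phrase the middle step as evaluating a $1$-form on a variation, while the paper writes the orthogonality identity first and then reinterprets it—and your closing observation about why both the $\widehat\tau$ and $\tau$ contributions are admissible is exactly the content of the paper's argument.
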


\begin{proof}
  The flux condition \eqref{eqn:numericalFlux} says that
  \begin{equation*}
    \int _{ \partial K } ( \widehat{ u } ^i - u ^i ) \widehat{ \tau } _i ^\mu \,\mathrm{d}^{m-1} x_\mu = 0,
  \end{equation*}
  for all $ \widehat{ \tau } \in \widehat{ \Sigma } ( \partial K )
  $. From the assumption on normal traces of elements of
  $ \Sigma (K) $, it follows that
  \begin{equation*}
    \int _{ \partial K } ( \widehat{ u } ^i - u ^i ) (\widehat{ \tau } _i ^\mu - \tau _i ^\mu )  \,\mathrm{d}^{m-1} x_\mu = 0,
  \end{equation*}
  for any $ \tau \in \Sigma (K) $ and
  $ \widehat{ \tau } \in \widehat{ \Sigma } ( \partial K ) $. This can
  be written as
  \begin{equation*}
    \int _{ \partial K } \bigl[ ( \widehat{ u } ^i - u ^i ) \,\mathrm{d}(\widehat{ \sigma  } _i ^\mu - \sigma  _i ^\mu ) \bigr]  \,\mathrm{d}^{m-1} x_\mu = 0,
  \end{equation*}
  and taking the exterior derivative yields \eqref{eqn:msclJump}.
\end{proof}

\subsection{Global multisymplecticity criteria}

Whenever \eqref{eqn:fluxHDG} is multisymplectic, we may of course sum
\eqref{eqn:msclHDG} over an arbitrary collection of elements
$ \mathcal{K} \subset \mathcal{T} _h $ to obtain the global statement
\begin{equation}
  \label{eqn:weakMSCL}
  \sum _{ K \in \mathcal{K} } \int _{ \partial K } (\mathrm{d} \widehat{ u } ^i \wedge \mathrm{d} \widehat{ \sigma } _i ^\mu) \,\mathrm{d}^{m-1} x_\mu = 0 ,
\end{equation}
or equivalently, by \autoref{lem:jump}
\begin{equation}
  \label{eqn:weakJumpMSCL}
  \sum _{ K \in \mathcal{K} } \int _{ \partial K } \bigl[ \mathrm{d} (\widehat{ u } ^i - u ^i ) \wedge \mathrm{d} ( \widehat{ \sigma } _i ^\mu - \sigma _i ^\mu ) \bigr] \,\mathrm{d}^{m-1} x_\mu = 0 .
\end{equation}
In the special case $ \mathcal{K} = \mathcal{T} _h $, for a
second-order linear elliptic PDE, \eqref{eqn:weakJumpMSCL} is
precisely Equation 2.10 from \citet{CoGoLa2009}, which establishes the
symmetry of the bilinear form used to solve for $ \widehat{ u } $ once
internal degrees of freedom have been eliminated (i.e., the Schur
complement). Hence, \eqref{eqn:weakJumpMSCL} can be seen as a
generalization of this symmetry condition to nonlinear multisymplectic
systems and arbitrary $ \mathcal{K} \subset \mathcal{T} _h $.

However, \eqref{eqn:weakMSCL} is a rather weak global
multisymplecticity condition, since it follows trivially from the
local condition \eqref{eqn:msclHDG}. We now define a stronger version
of multisymplecticity, which is more analogous to the classical
multisymplectic conservation law \eqref{eqn:msclIntegral}.

\begin{definition}
  The flux formulation \eqref{eqn:fluxHDG} is \emph{strongly
    multisymplectic} if solutions satisfy
  \begin{equation}
    \label{eqn:strongMSCL}
    \int _{ \partial ( \overline{ \bigcup \mathcal{K} } ) } (\mathrm{d} \widehat{ u } ^i \wedge \mathrm{d} \widehat{ \sigma } _i ^\mu) \,\mathrm{d}^{m-1} x_\mu = 0 ,
  \end{equation} 
  for all $ \mathcal{K} \subset \mathcal{T} _h $, whenever the
  $ C ^1 $ differential $1$-form
  $ \phi ^i _\mu \,\mathrm{d}\sigma _i ^\mu + f _i \,\mathrm{d}u ^i $
  is closed.
\end{definition}

Taking $ \mathcal{K} = \{ K \} $ immediately implies the local
condition \eqref{eqn:msclHDG} for each $ K \in \mathcal{T} _h $, so
the stronger condition \eqref{eqn:strongMSCL} indeed implies the
weaker condition \eqref{eqn:weakMSCL}. It follows that
\eqref{eqn:strongMSCL} holds if and only if the terms of
\eqref{eqn:weakMSCL} cancel on internal facets. The property that
conservation laws ``add up'' correctly over unions of elements is
directly related to the conservativity condition
\eqref{eqn:conservativity}; indeed, this is the reason the term
``conservative'' is used to describe numerical fluxes with
single-valued normal components. (See Equation 3.2 in
\citet{ArBrCoMa2001}.)

Let $ e = \partial K ^+ \cap \partial K ^- $ be an internal facet,
where $ K ^\pm \in \mathcal{T} _h $ are distinct. Recall from
\autoref{rmk:doubleValued} that an element
$ \widehat{ \tau } \in \widehat{ \Sigma } $ is generally double-valued
on $e$, since the $ \widehat{ \Sigma } ( \partial K ^+ ) $ and
$ \widehat{ \Sigma } ( \partial K ^-) $ components need not agree. As
is common in the discontinuous Galerkin literature (including
\citet{ArBrCoMa2001,CoGoLa2009}), we define the ``normal jump''
$ \llbracket \widehat{ \tau } \rrbracket \rvert _e \in \bigl[ L ^2 (e)
\bigr] ^n $ by
\begin{equation*}
  \llbracket \widehat{ \tau } \rrbracket _i \rvert _e = \widehat{ \tau } _i ^\mu \mathbf{n} _\mu \rvert _{ e ^+} + \widehat{ \tau } _i ^\mu \mathbf{n} _\mu \rvert _{ e ^-} .
\end{equation*} 
Here, $ e ^{ \pm } $ denotes that $e$ is oriented according to
$ \partial K ^\pm $, and we take the corresponding component of
$ \widehat{ \tau } $ and outer normal $ \mathbf{n} $ for each term on
the right-hand side. Denoting the set of internal facets of
$ \mathcal{T} _h $ by
$ \mathcal{E} _h ^\circ \coloneqq \mathcal{E} _h \setminus \partial U
$, we may sum directly over $ e \in \mathcal{E} _h ^\circ $ to define
$ \llbracket \widehat{ \tau } \rrbracket \in \bigl[ L ^2 ( \mathcal{E}
_h ^\circ ) \bigr] ^n $.  Hence, the normal component of
$ \widehat{ \tau } $ is single-valued on internal facets if and only
if $ \llbracket \widehat{ \tau } \rrbracket = 0 $.

With this notation, the conservativity condition
\eqref{eqn:conservativity} can be rewritten as
\begin{equation*}
  \int _{ \mathcal{E} _h ^\circ } \llbracket \widehat{ \sigma } \rrbracket _i \widehat{ v } ^i \,\mathrm{d}^{m-1} x = 0 , \qquad \forall \widehat{ v } \in \widehat{ V } _0 .
\end{equation*}
If the extension by zero of
$ \llbracket \widehat{ \sigma } \rrbracket $ to $ \mathcal{E} _h $ is
in $ \widehat{ V } _0 $, then applying the conservativity condition
with $ \widehat{ v } = \llbracket \widehat{ \sigma } \rrbracket $
immediately implies $ \llbracket \widehat{ \sigma } \rrbracket = 0 $,
and we say that $ \widehat{ \sigma } $ is \emph{strongly
  conservative}. However, this is not always the case: for example, if
$ \widehat{ \Sigma } ( \partial K ) $ contains discontinuous traces
but $ \widehat{ V } ( \partial K ) $ contains only continuous traces,
then in general
$ \llbracket \widehat{ \sigma } \rrbracket \notin \widehat{ V } _0 $,
so we cannot conclude that
$ \llbracket \widehat{ \sigma } \rrbracket $ vanishes. In this case,
we say that $ \widehat{ \sigma } $ is only \emph{weakly
  conservative}. (This terminology is taken from \citet{CoGoLa2009}.)

\begin{theorem}
  \label{thm:strongMSCL}
  If $ \widehat{ \sigma } \in \widehat{ \Sigma } $ satisfies the
  strong conservativity condition
  $ \llbracket \widehat{ \sigma } \rrbracket = 0 $, then for any
  $ \widehat{ u } \in \widehat{ V } $ and
  $ \mathcal{K} \subset \mathcal{T} _h $,
  \begin{equation*}
    \sum _{ K \in \mathcal{K} } \int _K ( \mathrm{d} \widehat{ u } ^i \wedge \mathrm{d} \widehat{ \sigma } _i ^\mu ) \,\mathrm{d}^{m-1} x_\mu = \int _{ \partial ( \overline{ \bigcup \mathcal{K} } ) } ( \mathrm{d} \widehat{ u } ^i \wedge \mathrm{d} \widehat{ \sigma } _i ^\mu ) \,\mathrm{d}^{m-1} x_\mu .
  \end{equation*}
  Consequently, if \eqref{eqn:fluxHDG} is multisymplectic and strongly
  conservative, then it is strongly multisymplectic.
\end{theorem}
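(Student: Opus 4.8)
The plan is to prove the displayed integral identity by a facet-by-facet accounting of the left-hand sum, after which the ``consequently'' clause drops out in a single line. First I would partition the facets comprising $\bigcup_{K \in \mathcal{K}} \partial K$ into those \emph{internal} to $\mathcal{K}$, i.e.~shared by two distinct elements $K^\pm \in \mathcal{K}$, and those lying on $\partial(\overline{\bigcup\mathcal{K}})$, i.e.~either on $\partial U$ or shared with an element of $\mathcal{T}_h \setminus \mathcal{K}$. Each internal facet $e = \partial K^+ \cap \partial K^-$ is visited exactly twice by the sum (once from each of $K^\pm$, with opposite outward normals), whereas each boundary facet is visited exactly once, by the unique element of $\mathcal{K}$ containing it, carrying the outward normal of $\bigcup\mathcal{K}$.

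The key point is that $\widehat{u} \in \widehat{V} \subset [L^2(\mathcal{E}_h)]^n$ is single-valued on the skeleton, so $\mathrm{d}\widehat{u}^i$ agrees on $e^+$ and $e^-$, while $\widehat{\sigma} \in \widehat{\Sigma}$ is generally double-valued there (\autoref{rmk:doubleValued}). Recalling that $\int_{\partial K}(\cdot)^\mu\,\mathrm{d}^{m-1}x_\mu$ contracts against the outward unit normal $\mathbf{n}_\mu$ of $K$, and that these normals are fixed geometric quantities commuting with the variational exterior derivative $\mathrm{d}$, the two contributions of an internal facet $e$ combine into
\begin{equation*}
  \int_e \mathrm{d}\widehat{u}^i \wedge \bigl( \mathrm{d}\widehat{\sigma}_i^\mu\,\mathbf{n}_\mu\rvert_{e^+} + \mathrm{d}\widehat{\sigma}_i^\mu\,\mathbf{n}_\mu\rvert_{e^-} \bigr)\,\mathrm{d}^{m-1}x = \int_e \mathrm{d}\widehat{u}^i \wedge \mathrm{d}\llbracket\widehat{\sigma}\rrbracket_i\,\mathrm{d}^{m-1}x .
\end{equation*}
Strong conservativity gives $\llbracket\widehat{\sigma}\rrbracket = 0$, hence $\mathrm{d}\llbracket\widehat{\sigma}\rrbracket = 0$, so every internal-facet contribution vanishes. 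What remains is exactly the sum over boundary facets, which reassembles into $\int_{\partial(\overline{\bigcup\mathcal{K}})}(\mathrm{d}\widehat{u}^i \wedge \mathrm{d}\widehat{\sigma}_i^\mu)\,\mathrm{d}^{m-1}x_\mu$, proving the identity.

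For the final assertion, if \eqref{eqn:fluxHDG} is multisymplectic then the local condition \eqref{eqn:msclHDG} holds on each $K$, so the left-hand side of the identity is a sum of zeros; the identity then forces the right-hand side to vanish for every $\mathcal{K} \subset \mathcal{T}_h$, which is precisely the strong multisymplecticity condition \eqref{eqn:strongMSCL}.

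The step I expect to demand the most care is the orientation bookkeeping underlying the displayed formula: checking that the normal-jump definition $\llbracket\widehat{\sigma}\rrbracket_i = \widehat{\sigma}_i^\mu\mathbf{n}_\mu\rvert_{e^+} + \widehat{\sigma}_i^\mu\mathbf{n}_\mu\rvert_{e^-}$ matches the signs produced by the two oppositely-oriented copies of each internal facet, and confirming that it is the single-valuedness of $\widehat{u}$ that allows $\mathrm{d}\widehat{u}^i$ to be pulled outside the two-sided sum so that only $\llbracket\widehat{\sigma}\rrbracket$ survives. This pinpoints strong conservativity as exactly the hypothesis making the internal terms cancel; when it fails (as for CG-H, which is only weakly conservative), these terms persist and only the weaker statement \eqref{eqn:weakMSCL} is recovered.
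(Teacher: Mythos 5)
Your proof is correct and takes essentially the same approach as the paper's: both arguments cancel the internal-facet contributions by pairing the two oppositely-oriented copies of each facet, using the single-valuedness of $\widehat{u}$ (so $\mathrm{d}\widehat{u}^i$ is common to both sides) together with strong conservativity $\llbracket\widehat{\sigma}\rrbracket = 0$, leaving exactly the outer-boundary facets. The only cosmetic difference is ordering: the paper first notes $\int_e \llbracket\widehat{\sigma}\rrbracket_i \,\mathrm{d}\widehat{u}^i\,\mathrm{d}^{m-1}x = 0$ and then takes the exterior derivative, whereas you differentiate the jump directly, i.e.\ $\mathrm{d}\llbracket\widehat{\sigma}\rrbracket = 0$; these are interchangeable.
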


\begin{proof}
  Let $ e = \partial K ^+ \cap \partial K ^- $ be an internal facet,
  where $ K ^\pm \subset \mathcal{K} $. Since
  $ \llbracket \widehat{ \sigma } \rrbracket = 0 $, we have
  \begin{equation*}
    \int _{ e ^+ } ( \widehat{ \sigma } _i ^\mu \,\mathrm{d} \widehat{ u } ^i ) \,\mathrm{d}^{m-1} x_\mu + \int _{ e ^- } ( \widehat{ \sigma } _i ^\mu \,\mathrm{d} \widehat{ u } ^i ) \,\mathrm{d}^{m-1} x_\mu = 
    \int _e \bigl( \llbracket \widehat{ \sigma } \rrbracket _i \,\mathrm{d}\widehat{ u } ^i \bigr) \,\mathrm{d}^{m-1} x = 0 . 
  \end{equation*}
  Finally, taking the exterior derivative implies that
  \begin{equation*}
    \int _{ e ^+ } ( \mathrm{d} \widehat{ u } ^i \wedge \mathrm{d} \widehat{ \sigma } _i ^\mu ) \,\mathrm{d}^{m-1} x_\mu + \int _{ e ^- } ( \mathrm{d} \widehat{ u } ^i \wedge \mathrm{d} \widehat{ \sigma } _i ^\mu ) \,\mathrm{d}^{m-1} x_\mu = 0,
  \end{equation*}
  so the contributions from internal facets vanish, as claimed.
\end{proof}

\subsection{The flux formulation for exact solutions}
\label{sec:exactSolutions}

We now apply the theory of the preceding sections to exact solutions
of \eqref{eqn:pdeSystem}, in the sense of distributions, in which case
the flux formulation \eqref{eqn:fluxHDG} consists of
infinite-dimensional function spaces.

\begin{definition}
  The \emph{exact flux formulation} on $ \mathcal{T} _h $ is the flux
  formulation \eqref{eqn:fluxHDG} associated to the function spaces
  \begin{align*}
    V (K) &= \bigl[ H ^2 (K) \bigr] ^n ,  & \Sigma (K) &= \bigl[ H ^1 (K) \bigr] ^{ m n } ,\\
    \widehat{ V } &= \bigl[ L ^2 ( \mathcal{E} _h  ) \bigr] ^n , & \widehat{ \Sigma } ( \partial K ) &= \bigl[ L ^2 ( \partial K ) \bigr] ^{ m n } ,
  \end{align*}
  along with the flux functions
  $ \Phi _K ( u, \sigma , \widehat{ u } , \widehat{ \sigma } ) =
  \widehat{ \sigma } - \sigma $, for $ K \in \mathcal{T} _h $.
\end{definition}

The next theorem uses a domain-decomposition-type argument to relate
the exact flux formulation to solutions of \eqref{eqn:pdeSystem}, in
the sense of distributions, defined over certain global function
spaces on $U$.

\begin{theorem}
  \label{thm:exactSolution}
  The element
  $ ( u, \sigma , \widehat{ u } , \widehat{ \sigma } ) \in V \times
  \Sigma \times \widehat{ V } \times \widehat{ \Sigma } $ is a
  solution to the exact flux formulation if and only if
  \begin{equation*}
    ( u , \sigma ) \in \Bigl( V \cap \bigl[ H ^1 (U) \bigr] ^n \Bigr)
    \times \Bigl( \Sigma \cap \bigl[ H ( \operatorname{div}; U ) \bigr]
    ^n \Bigr)
  \end{equation*}
  is a solution to \eqref{eqn:pdeSystem} on $U$, in the sense of
  distributions, and $ ( \widehat{ u } , \widehat{ \sigma } ) $ are
  the exact traces $ \widehat{ u } = u \rvert _{ \mathcal{E} _h } $
  and
  $ \widehat{ \sigma } \rvert _{ \partial K } = \sigma \rvert
  _{ \partial K } $ for all $ K \in \mathcal{T} _h $.
\end{theorem}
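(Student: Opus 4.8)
The plan is to prove the two implications separately, using integration by parts to split each equation of \eqref{eqn:fluxHDG} into (a) an element-wise strong-form equation and (b) a condition relating the discrete traces to the true traces, and then to \emph{glue} the element-wise equations into a single distributional statement on $U$ using the resulting continuity conditions. The cleanest starting point is the numerical flux condition \eqref{eqn:numericalFlux}: because $\widehat{\Sigma}(\partial K) = [L^2(\partial K)]^{mn}$ is the \emph{entire} boundary $L^2$ space and $\Phi_K = \widehat{\sigma}-\sigma$, testing against all $\widehat{\tau}$ forces $\widehat{\sigma}|_{\partial K} = \sigma|_{\partial K}$ immediately (all $mn$ components, not merely the normal one), identifying $\widehat{\sigma}$ with the exact trace of $\sigma$.

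For the forward direction, I would integrate \eqref{eqn:localSolverA} by parts to obtain $\int_{\partial K}(\widehat{u}^i - u^i)\tau_i^\mu\mathbf{n}_\mu\,\mathrm{d}^{m-1}x_\mu = \int_K(\phi^i_\mu - \partial_\mu u^i)\tau^\mu_i\,\mathrm{d}^m x$ for all $\tau\in[H^1(K)]^{mn}$. Taking $\tau$ compactly supported in $K$ kills the boundary term and, by density of $[C_c^\infty(K)]^{mn}$ in $[L^2(K)]^{mn}$, yields $\partial_\mu u^i = \phi^i_\mu$ a.e.\ on $K$; feeding this back and using density of the normal traces $\tau^\mu_i\mathbf{n}_\mu$ of $[H^1(K)]^{mn}$ in $[L^2(\partial K)]^n$ gives $\widehat{u}|_{\partial K} = u|_{\partial K}$. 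An analogous integration by parts in \eqref{eqn:localSolverB}, now using $\widehat{\sigma} = \sigma$ on $\partial K$ to cancel the boundary term, yields $-\partial_\mu\sigma^\mu_i = f_i$ a.e.\ on $K$. Rewriting the conservativity condition \eqref{eqn:conservativity} in terms of the normal jump $\llbracket\widehat{\sigma}\rrbracket = \llbracket\sigma\rrbracket$ and using that $\widehat{V}_0$ exhausts $[L^2(\mathcal{E}_h^\circ)]^n$ forces $\llbracket\sigma\rrbracket = 0$, i.e.\ continuity of the normal trace of $\sigma$; meanwhile, the single-valuedness of $\widehat{u}$ together with $\widehat{u}|_{\partial K} = u|_{\partial K}$ forces the two one-sided traces of $u$ to agree across every internal facet.

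The main obstacle, and the conceptual heart of the theorem, is the \emph{gluing step}: promoting these element-wise equations to global distributional equations on $U$. Here I would test against compactly supported smooth functions on $U$, break $\int_U$ into $\sum_K\int_K$, integrate by parts on each $K$, and collect the facet contributions. For the divergence equation these assemble into $\sum_{e\in\mathcal{E}_h^\circ}\int_e\llbracket\sigma\rrbracket_i\varphi^i\,\mathrm{d}^{m-1}x$, which vanishes by the normal continuity just established; for the gradient equation they assemble into jumps of $u\mathbf{n}$, which vanish because $u$ is continuous across facets and the outward normals on the two sides are opposite. This is precisely where the matching conditions earn their keep, and it is equivalent to the standard characterizations that a broken $H^1$ function with matching traces lies in $H^1(U)$ and a broken $H(\operatorname{div})$ field with continuous normal trace lies in $H(\operatorname{div};U)$. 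With the facet terms gone, the element-wise PDEs integrate up to \eqref{eqn:pdeSystem} on $U$ in the sense of distributions, and the stated global regularity $u\in[H^1(U)]^n$, $\sigma\in[H(\operatorname{div};U)]^n$ follows.

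Finally, for the converse I would run each of these computations backwards: given a distributional solution of the stated regularity together with the definitions $\widehat{u} = u|_{\mathcal{E}_h}$ and $\widehat{\sigma}|_{\partial K} = \sigma|_{\partial K}$, integration by parts on each $K$ recovers \eqref{eqn:localSolverA}--\eqref{eqn:localSolverB}, the numerical flux condition \eqref{eqn:numericalFlux} holds trivially since $\Phi_K = \widehat{\sigma} - \sigma = 0$, and the conservativity condition \eqref{eqn:conservativity} follows from continuity of the normal trace of $\sigma$ (so $\llbracket\widehat{\sigma}\rrbracket = 0$) together with $\widehat{v}|_{\partial U} = 0$. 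I expect this direction to be routine once the forward direction has pinned down the correct integration-by-parts identities.
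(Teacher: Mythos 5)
Your proposal is correct and follows essentially the same route as the paper's proof: use the flux condition and local solvers with suitable test functions to obtain the element-wise strong equations and the trace identities $\widehat{u}\rvert_{\partial K} = u\rvert_{\partial K}$, $\widehat{\sigma}\rvert_{\partial K} = \sigma\rvert_{\partial K}$, use conservativity to get single-valuedness of the normal trace of $\sigma$, conclude the global regularity and distributional equations on $U$, and verify the converse by direct substitution. The only difference is presentational: where you carry out the gluing step by hand (testing against $C_c^\infty(U)$ functions and cancelling the facet contributions), the paper simply invokes the standard characterizations of $H^1(U)$ and $H(\operatorname{div};U)$ in terms of matching traces (Propositions III.1.1 and III.1.2 of Brezzi and Fortin), which encode exactly the argument you wrote out.
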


\begin{proof}
  Suppose
  $ ( u, \sigma , \widehat{ u } , \widehat{ \sigma } ) \in V \times
  \Sigma \times \widehat{ V } \times \widehat{ \Sigma } $ is a
  solution to the exact flux formulation.

  In particular, \eqref{eqn:localSolverA} holds for all
  $ \tau \in \bigl[ C _c ^\infty (K) \bigr] ^{ m n } $, which
  immediately gives $ \partial _\mu u ^i = \phi ^i _\mu $ on $K$, in
  the sense of distributions. Taking more general test functions
  $ \tau \in \bigl[ C ^\infty (K) \bigr] ^{ m n } $, not necessarily
  vanishing on $ \partial K $, implies that
  $ \widehat{ u } \rvert _{ \partial K } = u \rvert _{ \partial K } $
  in the trace sense. Since this holds for all
  $ K \in \mathcal{T} _h $, it follows that
  $ \widehat{ u } = u \rvert _{ \mathcal{E} _h } $. Hence, the trace
  of $u$ is single-valued on $ \mathcal{E} _h $, and we may therefore
  conclude (cf.~\citet[Proposition III.1.1]{BrFo1991}) that
  $ u \in \bigl[ H ^1 (U) \bigr] ^n $.

  Similarly, taking smooth test functions $v$ in
  \eqref{eqn:localSolverB} implies that
  $ - \partial _\mu \sigma _i ^\mu = f _i $ holds in the sense of
  distributions, and
  $ \widehat{ \sigma } _i ^\mu \mathbf{n} _\mu \rvert _{ \partial K }
  = \sigma _i ^\mu \mathbf{n} _\mu \rvert _{ \partial K } $ holds in
  the trace sense, for all $ K \in \mathcal{T} _h $. The flux equation
  \eqref{eqn:numericalFlux} implies further that
  $ \widehat{ \sigma } \rvert _{ \partial K } = \sigma \rvert
  _{ \partial K } $, and the conservativity condition
  \eqref{eqn:conservativity} gives
  $ \llbracket \widehat{ \sigma } \rrbracket = 0 $ on
  $ \mathcal{E} _h ^\circ $. Hence, the normal trace of $\sigma$ is
  single-valued on $ \mathcal{E} _h $, and we may therefore conclude
  (cf.~\citet[Proposition III.1.2]{BrFo1991}) that
  $ \sigma \in \bigl[ H (\operatorname{div}; U) \bigr] ^n $.

  The converse is a simple verification of \eqref{eqn:fluxHDG}. If
  $ u \in V \cap \bigl[ H ^1 (U) \bigr] ^n $, then it has a
  (single-valued) trace
  $ \widehat{ u } \in \bigl[ L ^2 ( \mathcal{E} _h ) \bigr] ^n =
  \widehat{ V } $. Likewise, if
  $ \sigma \in \Sigma \cap \bigl[ H (\operatorname{div}; U) \bigr] ^n
  $, then it has a trace
  $ \widehat{ \sigma } \rvert _{ \partial K } \in \bigl[ L ^2
  ( \partial K ) \bigr] ^{ m n } = \widehat{ \Sigma } (\partial K) $ for each
  $ K \in \mathcal{T} _h $; this satisfies
  $ \llbracket \widehat{ \sigma } \rrbracket = 0 $ on
  $ \mathcal{E} _h ^\circ $, so
  \eqref{eqn:numericalFlux}--\eqref{eqn:conservativity} hold. Finally,
  equations \eqref{eqn:localSolverA}--\eqref{eqn:localSolverB} hold by
  the assumption that $ ( u, \sigma ) $ satisfies
  \eqref{eqn:pdeSystem} in the sense of distributions.
\end{proof}

\begin{corollary}
  \label{cor:exactMSCL}
  The exact flux formulation satisfies
  \begin{equation*}
    \int _{ \partial ( \overline{ \bigcup \mathcal{K} } ) } (\mathrm{d} u ^i \wedge \mathrm{d} \sigma _i ^\mu) \,\mathrm{d}^{m-1} x_\mu = 0 ,
  \end{equation*}
  for all $ \mathcal{K} \subset \mathcal{T} _h $, whenever
  $ \phi ^i _\mu \,\mathrm{d}\sigma _i ^\mu + f _i \,\mathrm{d}u ^i $
  is closed.
\end{corollary}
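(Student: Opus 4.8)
The plan is to recognize the claimed identity as a direct consequence of \autoref{thm:exactSolution} together with \autoref{thm:strongMSCL}, specialized to the infinite-dimensional spaces of the exact flux formulation. First I would verify that the exact flux formulation meets the two hypotheses of \autoref{thm:strongMSCL}---that it is multisymplectic and that it is strongly conservative---and then I would translate the resulting strong multisymplecticity statement, which is phrased in terms of the approximate traces $\widehat{u}$ and $\widehat{\sigma}$, into the desired statement in terms of $u$ and $\sigma$ using the trace identities supplied by \autoref{thm:exactSolution}.

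For multisymplecticity, I would observe that, since $\widehat{\Sigma}(\partial K) = [L^2(\partial K)]^{mn}$ is the full trace space and the flux function is $\Phi_K = \widehat{\sigma} - \sigma$, the flux condition \eqref{eqn:numericalFlux} reduces to $\int_{\partial K}(\widehat{\sigma}_i^\mu - \sigma_i^\mu)\widehat{\tau}_i^\mu\,\mathrm{d}^{m-1}x = 0$ for every $\widehat{\tau} \in \widehat{\Sigma}(\partial K)$, and since $\widehat{\tau}$ ranges over the entire space this forces $\widehat{\sigma}_i^\mu - \sigma_i^\mu = 0$ on $\partial K$ for all $i$ and $\mu$. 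Consequently $\mathrm{d}(\widehat{\sigma}_i^\mu - \sigma_i^\mu) = 0$ as a variation, so the integrand of \eqref{eqn:msclJump} vanishes identically and $\Phi_K$ is multisymplectic in the sense of \autoref{def:multisymplecticFlux}. For strong conservativity, I would use that $\widehat{V} = [L^2(\mathcal{E}_h)]^n$ is the full space, so the extension by zero of $\llbracket\widehat{\sigma}\rrbracket$ lies in $\widehat{V}_0$; testing the conservativity condition \eqref{eqn:conservativity} against $\widehat{v} = \llbracket\widehat{\sigma}\rrbracket$ then yields $\llbracket\widehat{\sigma}\rrbracket = 0$. (This vanishing is, in any case, already recorded in the proof of \autoref{thm:exactSolution}.)

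With both hypotheses in hand, \autoref{thm:strongMSCL} gives strong multisymplecticity \eqref{eqn:strongMSCL}, namely $\int_{\partial(\overline{\bigcup\mathcal{K}})}(\mathrm{d}\widehat{u}^i \wedge \mathrm{d}\widehat{\sigma}_i^\mu)\,\mathrm{d}^{m-1}x_\mu = 0$ for every $\mathcal{K} \subset \mathcal{T}_h$. To finish, I would invoke the trace identities $\widehat{u} = u\rvert_{\mathcal{E}_h}$ and $\widehat{\sigma}\rvert_{\partial K} = \sigma\rvert_{\partial K}$ from \autoref{thm:exactSolution}: the boundary $\partial(\overline{\bigcup\mathcal{K}})$ is a union of facets in $\mathcal{E}_h$, on which $\widehat{u}$ and $\widehat{\sigma}$ coincide with the traces of $u$ and $\sigma$, so the integral may be rewritten with $u$ and $\sigma$ in place of $\widehat{u}$ and $\widehat{\sigma}$, giving exactly the asserted identity. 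I do not anticipate a genuine obstacle, since the corollary is essentially an assembly of the two preceding theorems; the only points requiring care are confirming that the full-space choices of $\widehat{V}$ and $\widehat{\Sigma}$ make the multisymplecticity and strong-conservativity hypotheses automatic, and that the approximate-to-exact trace substitution is legitimate on the outer boundary.
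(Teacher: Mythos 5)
Your proposal is correct and follows essentially the same route as the paper's proof: verify the hypotheses of \autoref{thm:strongMSCL} to obtain strong multisymplecticity \eqref{eqn:strongMSCL}, then ``remove the hats'' using the trace identities $\widehat{u} = u\rvert_{\mathcal{E}_h}$ and $\widehat{\sigma}\rvert_{\partial K} = \sigma\rvert_{\partial K}$ from \autoref{thm:exactSolution}. The only cosmetic difference is that you prove multisymplecticity directly from the fact that the flux condition forces $\widehat{\sigma}\rvert_{\partial K} = \sigma\rvert_{\partial K}$ (so \eqref{eqn:msclJump} holds trivially), whereas the paper cites \autoref{thm:ldgFlux} with $\lambda \equiv 0$---an alternative the paper itself acknowledges as equivalent in the remark following \autoref{thm:rt-h}.
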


\begin{proof}
  It follows immediately from \autoref{thm:ldgFlux} (with
  $ \lambda \equiv 0 $) and \autoref{thm:strongMSCL} that the exact
  flux formulation is strongly multisymplectic, so it satisfies
  \eqref{eqn:strongMSCL}. Moreover, \autoref{thm:exactSolution} gives
  $ \widehat{ u } = u \rvert _{ \mathcal{E} _h } $ and
  $ \widehat{ \sigma } \rvert _{ \partial K } = \sigma \rvert
  _{ \partial K } $ for all $ K \in \mathcal{T} _h $, so we may
  ``remove the hats'' from \eqref{eqn:strongMSCL}.
\end{proof}

\section{Multisymplecticity of particular HDG methods}
\label{sec:hdg}

We now apply the results of \autoref{sec:flux} to the particular HDG
methods discussed in \citet{CoGoLa2009}. Although the flux formulation
\eqref{eqn:fluxHDG} is more general than that for the class of
second-order linear elliptic PDEs they consider, the spaces and fluxes
used to define the methods are essentially unchanged.

Throughout this section, we assume that $U \subset \mathbb{R}^m $ is
polyhedral and that $ \mathcal{T} _h $ is a triangulation of $U$ by
$ m $-simplices. We denote by $ \mathcal{P} _r (K) $ the space of
degree-$r$ polynomials on $ K \in \mathcal{T} _h $ and by
$ \mathcal{P} _r ( e ) $ the space of degree-$r$ polynomials on
$ e \in \mathcal{E} _h $. We also define spaces of discontinuous
polynomial boundary traces,
\begin{equation*}
  \mathcal{P} _r ( \mathcal{E} _h ) \coloneqq \bigl\{ \widehat{ w } \in L ^2 ( \mathcal{E} _h ) : \widehat{ w } \rvert _e \in \mathcal{P} _r (e) ,\ \forall e \in \mathcal{E} _h ^\circ \bigr\} 
\end{equation*}
and continuous polynomial boundary traces,
\begin{equation*}
  \mathcal{P} _r ^c ( \mathcal{E} _h ) \coloneqq \bigl\{ \widehat{ w } \in C ^0  ( \mathcal{E} _h ) : \widehat{ w } \rvert _e \in \mathcal{P} _r (e) ,\ \forall e \in \mathcal{E} _h \bigr\} ,
\end{equation*}
which in \citet{CoGoLa2009} are called $ \mathcal{M} _{ h , r } $ and
$ \mathcal{M} _{ h , r } ^c $, respectively.

\subsection{The RT-H method} The hybridized Raviart--Thomas (RT-H)
method uses the local function spaces
\begin{equation*}
  V (K) = \bigl[ \mathcal{P} _r (K) \bigr] ^n , \qquad \Sigma (K) = \bigl[ \mathcal{P} _r (K) ^m + x \mathcal{P} _r (K) \bigr] ^n ,
\end{equation*}
i.e., degree-$r$ Lagrange finite elements and Raviart--Thomas finite
elements, respectively, for each $ i = 1, \ldots, n $. The trace
spaces are taken to be
\begin{equation*}
  \widehat{ V } = \bigl[ \mathcal{P} _r ( \mathcal{E} _h ) \bigr] ^n , \qquad \widehat{ \Sigma } (\partial K) = \bigl[ L ^2 ( \partial K ) \bigr] ^{ m n } ,
\end{equation*}
and the local flux functions are
\begin{equation*}
  \Phi _K ( u , \sigma , \widehat{ u } , \widehat{ \sigma } ) = \widehat{ \sigma } - \sigma .
\end{equation*}
Note that, although $ \widehat{ \Sigma } ( \partial K ) $ is
infinite-dimensional, the flux condition \eqref{eqn:numericalFlux}
simply states that
$ \widehat{ \sigma } \rvert _{ \partial K } = \sigma \rvert
_{ \partial K } $, so we may eliminate this equation and substitute
$\sigma$ wherever $ \widehat{ \sigma } $ appears in the remaining
equations.

\begin{theorem}
  \label{thm:rt-h}
  The RT-H method is strongly multisymplectic.
\end{theorem}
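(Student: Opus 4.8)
The plan is to invoke \autoref{thm:strongMSCL}, which reduces strong multisymplecticity to two separate claims: that the RT-H local flux function is multisymplectic in the sense of \autoref{def:multisymplecticFlux}, and that the numerical flux is strongly conservative, i.e.\ $\llbracket \widehat{\sigma} \rrbracket = 0$. I would establish these two properties in turn and then combine them.

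For multisymplecticity I would observe that the RT-H flux $\Phi_K = \widehat{\sigma} - \sigma$ is exactly the flux of \autoref{thm:ldgFlux} with penalty $\lambda \equiv 0$. It therefore suffices to verify the hypothesis of that theorem: that for every $v \in V(K)$ and $\widehat{v} \in \widehat{V}$ there exists $\widehat{\tau} \in \widehat{\Sigma}(\partial K)$ with prescribed normal component $\widehat{\tau}_i^\mu \mathbf{n}_\mu = \delta_{ij}(\widehat{v}^j - v^j)\rvert_{\partial K}$. Since the RT-H method takes $\widehat{\Sigma}(\partial K) = [L^2(\partial K)]^{mn}$ to be the full trace space, this is immediate: one may simply set $\widehat{\tau}_i^\mu = (\widehat{v}^i - v^i)\mathbf{n}^\mu$ on $\partial K$, whose normal component is $(\widehat{v}^i - v^i)$. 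Thus multisymplecticity follows directly.

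The substantive step is strong conservativity. Because $\widehat{\Sigma}(\partial K)$ is the full $L^2$ space, the flux condition \eqref{eqn:numericalFlux} forces $\widehat{\sigma}\rvert_{\partial K} = \sigma\rvert_{\partial K}$ for every $K$, so $\llbracket \widehat{\sigma} \rrbracket$ coincides with the normal jump of $\sigma$ across internal facets. The key fact is that the normal trace of a Raviart--Thomas field on a facet $e$ is a degree-$r$ polynomial: writing $\sigma_i = p_i + x\, q_i$ on $K$ with $p_i \in [\mathcal{P}_r(K)]^m$ and $q_i \in \mathcal{P}_r(K)$, the scalar $x^\mu \mathbf{n}_\mu$ is constant on the hyperplane containing $e$, so $\sigma_i^\mu \mathbf{n}_\mu\rvert_e = p_i^\mu \mathbf{n}_\mu\rvert_e + (x^\mu \mathbf{n}_\mu)\, q_i\rvert_e \in \mathcal{P}_r(e)$ for each $i$. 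Consequently $\llbracket \widehat{\sigma} \rrbracket\rvert_e \in [\mathcal{P}_r(e)]^n$ on each internal facet, and its extension by zero to the boundary facets lies in $\widehat{V}_0 = [\mathcal{P}_r(\mathcal{E}_h)]^n \cap \{ \widehat{v} : \widehat{v}\rvert_{\partial U} = 0 \}$. I would then feed this choice, $\widehat{v} = \llbracket \widehat{\sigma} \rrbracket$, into the conservativity condition \eqref{eqn:conservativity}, which shows that the $L^2(\mathcal{E}_h^\circ)$ norm of $\llbracket \widehat{\sigma} \rrbracket$ vanishes and hence that $\llbracket \widehat{\sigma} \rrbracket = 0$.

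With both properties in hand, \autoref{thm:strongMSCL} immediately yields strong multisymplecticity. I expect the only delicate point to be the strong conservativity step, and within it the verification that the normal-trace space of $\Sigma(K)$ on each facet matches the trace space $\widehat{V}$ exactly; this degree-matching between $\mathcal{P}_r(e)$ and the normal traces of the Raviart--Thomas elements is precisely what allows $\llbracket \widehat{\sigma} \rrbracket$ to be used as a legal test function, and it is the feature that will distinguish RT-H from the merely weakly conservative CG-H method.
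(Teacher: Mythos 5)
Your proposal is correct and follows essentially the same route as the paper's proof: multisymplecticity via \autoref{thm:ldgFlux} with $\lambda \equiv 0$ (using that $\widehat{\Sigma}(\partial K) = \bigl[ L^2(\partial K) \bigr]^{mn}$ is the full trace space), and strong conservativity via the degree-$r$ normal traces of Raviart--Thomas fields on facets, combined through \autoref{thm:strongMSCL}. The only cosmetic differences are that you spell out the step from the extension by zero of $\llbracket \widehat{\sigma} \rrbracket$ lying in $\widehat{V}_0$ to $\llbracket \widehat{\sigma} \rrbracket = 0$ (which the paper packages into its definition of strong conservativity) and that you reproduce the normal-trace argument that the paper cites from Brezzi and Fortin.
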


\begin{proof}
  Observe that $ \Phi _K $ is a special case of the flux in
  \autoref{thm:ldgFlux}, with $ \lambda \equiv 0 $. Since
  $ \widehat{ \Sigma } ( \partial K ) = \bigl[ L ^2 ( \partial K )
  \bigr] ^{m n} $, it follows that for any $ v \in V (K) $ and
  $ \widehat{ v } \in \widehat{ V } $, we have
  $ ( \widehat{ v } - v ) \mathbf{n} \rvert _{ \partial K } \in
  \widehat{ \Sigma } ( \partial K ) $. Hence, the hypotheses of
  \autoref{thm:ldgFlux} are satisfied, so the method is
  multisymplectic.

  To show strong multisymplecticity, let
  $ e \in \mathcal{E} _h ^\circ $ be an arbitrary internal facet, and
  write $ e = \partial K ^+ \cap \partial K ^- $. Since
  $ \sigma \rvert _{K^\pm} \in \Sigma ( K ^\pm ) $, a standard result
  on Raviart--Thomas elements (cf.~\citet[Proposition
  III.3.2]{BrFo1991}) implies that
  $ \sigma _i ^\mu \mathbf{n} _\mu \rvert _{ e^\pm } \in \mathcal{P}
  _r (e) $ for $ i = 1 , \ldots, n $.\footnote{This result holds since
    $e$ lies in an affine hyperplane in $ \mathbb{R}^m $, so
    $ x \cdot \mathbf{n} $ is constant on $e$. It follows that the
    degree-$( r + 1 )$ elements of the Raviart--Thomas space
    nevertheless have degree-$r$ normal traces.} The flux condition
  \eqref{eqn:numericalFlux} implies
  $ \widehat{ \sigma } \rvert _{ e ^\pm } = \sigma \rvert _{ e ^\pm }
  $, so it follows from the above that
  $ \llbracket \widehat{ \sigma } \rrbracket \rvert _e \in \bigl[
  \mathcal{P} _r (e) \bigr] ^n $. Since this holds for all
  $ e \in \mathcal{E} _h ^\circ $, the extension by zero of
  $ \llbracket \widehat{ \sigma } \rrbracket $ to $ \mathcal{E} _h $
  is in $ \widehat{ V } _0 $.  Therefore, the RT-H method is strongly
  conservative, so \autoref{thm:strongMSCL} implies that it is
  strongly multisymplectic.
\end{proof}

\begin{remark}
  To prove multisymplecticity, instead of using \autoref{thm:ldgFlux},
  we could simply have used
  $ ( \widehat{ \sigma } - \sigma ) \rvert _{ \partial K } = 0 $ to
  see that \eqref{eqn:msclJump} holds. However, the argument we have
  developed here is more general, and we will see in \autoref{sec:ldg}
  that it also applies to methods where $ \lambda \not\equiv 0 $.
\end{remark}

\subsection{The BDM-H method}

The hybridized Brezzi--Douglas--Marini (BDM-H) method uses the local
function spaces
\begin{equation*}
  V (K) = \bigl[ \mathcal{P} _{ r -1 } (K) \bigr] ^n , \qquad \Sigma (K) = \bigl[ \mathcal{P} _r (K) \bigr] ^{ m n } ,
\end{equation*}
i.e., degree-$(r-1)$ Lagrange finite elements and degree-$r$
Brezzi--Douglas--Marini finite elements, respectively, for each
$ i = 1 , \ldots, n $. As in the RT-H method, the trace spaces are
taken to be
\begin{equation*}
  \widehat{ V } = \bigl[ \mathcal{P} _r ( \mathcal{E} _h ) \bigr] ^n , \qquad \widehat{ \Sigma } ( \partial K ) = \bigl[ L ^2 ( \partial K ) \bigr] ^{ m n } ,
\end{equation*}
and the local flux functions are
\begin{equation*}
  \Phi _K ( u , \sigma, \widehat{ u } , \widehat{ \sigma } ) = \widehat{ \sigma } - \sigma .
\end{equation*}
As with RT-H, the flux condition \eqref{eqn:numericalFlux} states that
$ \widehat{ \sigma } \rvert _{ \partial K } = \sigma \rvert
_{ \partial K } $, so we may eliminate \eqref{eqn:numericalFlux} and
substitute $\sigma$ for $ \widehat{ \sigma } $ in the remaining
equations.

\begin{theorem}
  \label{thm:bdm-h}
  The BDM-H method is strongly multisymplectic.
\end{theorem}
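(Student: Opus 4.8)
The plan is to follow the proof of \autoref{thm:rt-h} almost verbatim, since the BDM-H flux function and trace spaces have the same structure; the only genuine difference lies in the polynomial degree of the normal traces of $ \Sigma (K) $. First I would note that $ \Phi _K ( u, \sigma, \widehat{ u }, \widehat{ \sigma } ) = \widehat{ \sigma } - \sigma $ is precisely the flux of \autoref{thm:ldgFlux} with $ \lambda \equiv 0 $. Since $ \widehat{ \Sigma } ( \partial K ) = \bigl[ L ^2 ( \partial K ) \bigr] ^{ m n } $, the element $ ( \widehat{ v } - v ) \mathbf{n} \rvert _{ \partial K } $ belongs to $ \widehat{ \Sigma } ( \partial K ) $ for every $ v \in V (K) $ and $ \widehat{ v } \in \widehat{ V } $, so the hypotheses of \autoref{thm:ldgFlux} are satisfied and the method is multisymplectic.

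Next I would establish strong conservativity, which is where the BDM-specific input enters. The flux condition \eqref{eqn:numericalFlux} again forces $ \widehat{ \sigma } \rvert _{ \partial K } = \sigma \rvert _{ \partial K } $, so on an internal facet $ e = \partial K ^+ \cap \partial K ^- $ the jump $ \llbracket \widehat{ \sigma } \rrbracket \rvert _e $ is determined by the normal traces of $ \sigma $. The key observation is that these normal traces are degree-$r$ polynomials on $e$: unlike the Raviart--Thomas case, here $ \Sigma (K) = \bigl[ \mathcal{P} _r (K) \bigr] ^{ m n } $ consists of \emph{full} degree-$r$ vector polynomials, and $ \mathbf{n} $ is constant on the affine facet $e$, so each $ \sigma _i ^\mu \mathbf{n} _\mu \rvert _{ e ^\pm } $ is simply the restriction of a degree-$r$ polynomial and hence lies in $ \mathcal{P} _r (e) $---no analogue of the Raviart--Thomas normal-trace footnote is needed. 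It follows that $ \llbracket \widehat{ \sigma } \rrbracket \rvert _e \in \bigl[ \mathcal{P} _r (e) \bigr] ^n $ for every $ e \in \mathcal{E} _h ^\circ $, and since $ \widehat{ V } = \bigl[ \mathcal{P} _r ( \mathcal{E} _h ) \bigr] ^n $, the extension by zero of $ \llbracket \widehat{ \sigma } \rrbracket $ to $ \mathcal{E} _h $ lies in $ \widehat{ V } _0 $. Hence the method is strongly conservative.

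Finally, having shown the method is both multisymplectic and strongly conservative, I would invoke \autoref{thm:strongMSCL} to conclude that BDM-H is strongly multisymplectic. I do not expect any real obstacle here: the argument is structurally identical to \autoref{thm:rt-h}, and if anything it is cleaner, because the degree count for the BDM normal trace is immediate from the fact that $ \Sigma (K) $ is the full polynomial space rather than a Raviart--Thomas space requiring the special affine-hyperplane argument.
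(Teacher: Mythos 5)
Your proposal is correct and follows essentially the same route as the paper's proof: multisymplecticity via \autoref{thm:ldgFlux} with $\lambda \equiv 0$ (exactly as in \autoref{thm:rt-h}), and strong conservativity from the fact that the flux condition forces $\widehat{\sigma}\rvert_{\partial K} = \sigma\rvert_{\partial K}$ with $\sigma\rvert_{K^\pm} \in \bigl[\mathcal{P}_r(K^\pm)\bigr]^{mn}$, so that $\llbracket\widehat{\sigma}\rrbracket\rvert_e \in \bigl[\mathcal{P}_r(e)\bigr]^n$ extends by zero to an element of $\widehat{V}_0$, whence \autoref{thm:strongMSCL} applies. Your observation that the full polynomial space $\Sigma(K) = \bigl[\mathcal{P}_r(K)\bigr]^{mn}$ makes the degree count immediate, with no analogue of the Raviart--Thomas normal-trace footnote needed, matches the paper exactly.
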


\begin{proof}
  Since the trace spaces and flux function are the same as the RT-H
  method, multisymplecticity follows exactly as in \autoref{thm:rt-h}.

  Given any internal facet $ e = \partial K ^+ \cap \partial K ^- $, we
  have
  $ \sigma \rvert _{ K ^\pm } \in \bigl[ \mathcal{P} _r ( K ^\pm )
  \bigr] ^{ m n } $, so
  $ \widehat{ \sigma } \rvert _{ e ^\pm } = \sigma \rvert _{ e ^\pm }
  \in \bigl[ \mathcal{P} _r ( e ) \bigr] ^{ m n } $ and therefore
  $ \llbracket \widehat{ \sigma } \rrbracket \rvert _e \in \bigl[
  \mathcal{P} _r (e) \bigr] ^n $. Since this holds for all
  $ e \in \mathcal{E} _h ^\circ $, the extension by zero of
  $ \llbracket \widehat{ \sigma } \rrbracket $ to $ \mathcal{E} _h $
  is in $ \widehat{ V } _0 $.  Therefore, the BDM-H method is strongly
  conservative, so \autoref{thm:strongMSCL} implies that it is
  strongly multisymplectic.
\end{proof}

\subsection{The LDG-H methods}
\label{sec:ldg}

There are three variants of the hybridized local discontinuous
Galerkin method (LDG-H) discussed in \citet{CoGoLa2009}, corresponding
to different choices of the local function spaces. In the setting
and notation considered here, these three pairs of spaces are:
\begin{subequations}
\begin{alignat}{2}
  V (K) &= \bigl[ \mathcal{P} _{ r -1 } (K) \bigr] ^n , \qquad & \Sigma (K) &= \bigl[ \mathcal{P} _r (K) \bigr] ^{m n} , \label{eqn:ldg1} \\
  V (K) &= \bigl[ \mathcal{P} _{ r } (K) \bigr] ^n , \qquad& \Sigma (K) &= \bigl[ \mathcal{P} _r (K) \bigr] ^{m n} , \label{eqn:ldg2}\\
  V (K) &= \bigl[ \mathcal{P} _{ r } (K) \bigr] ^n , \qquad& \Sigma (K)
  &= \bigl[ \mathcal{P} _{r-1} (K) \bigr] ^{m n} \label{eqn:ldg3} .
\end{alignat} 
\end{subequations}
Whichever of these we choose, the trace spaces are
\begin{equation*}
  \widehat{ V } = \bigl[ \mathcal{P} _r ( \mathcal{E} _h ) \bigr] ^n , \qquad \widehat{ \Sigma } ( \partial K ) = \bigl[ L ^2 ( \partial K ) \bigr] ^{ m n } ,
\end{equation*}
and the local flux functions are
  \begin{equation*}
    \Phi _K ( u , \sigma , \widehat{ u } , \widehat{ \sigma } ) = ( \widehat{ \sigma } - \sigma ) - \lambda ( \widehat{ u } - u ) \mathbf{n} ,
  \end{equation*}
  where the penalty function $\lambda$ is piecewise constant on
  $ \partial K $, i.e., constant on each facet. Note that since
  $\lambda$ may be different for each $K \in \mathcal{T} _h $, on an
  internal facet $ e = \partial K ^+ \cap \partial K ^- $, the
  constants $ \lambda \rvert _{ e ^\pm } $ need not be equal to one
  another.  The flux condition \eqref{eqn:numericalFlux} states that
  \begin{equation*}
    \widehat{ \sigma } \rvert _{ \partial K } = \bigl[ \sigma + \lambda ( \widehat{ u } - u ) \mathbf{n} \bigr] \rvert _{ \partial K } ,
  \end{equation*}
  so we may eliminate \eqref{eqn:numericalFlux} and substitute the
  expression on the right-hand side wherever $ \widehat{ \sigma } $
  appears in the remaining equations.

\begin{theorem}
  The LDG-H method is strongly multisymplectic for each of the choices
  \eqref{eqn:ldg1}--\eqref{eqn:ldg3} of local function spaces.
\end{theorem}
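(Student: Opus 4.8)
The plan is to reuse, essentially verbatim, the two-step template already established for RT-H (\autoref{thm:rt-h}) and BDM-H (\autoref{thm:bdm-h}): first obtain local multisymplecticity from \autoref{thm:ldgFlux}, then verify strong conservativity so that \autoref{thm:strongMSCL} promotes this to strong multisymplecticity. The only new wrinkle, relative to RT-H and BDM-H, is the presence of the penalty term $-\lambda(\widehat{u}-u)\mathbf{n}$ in the flux, which must be tracked through the degree count.

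First I would dispatch multisymplecticity. Since $\widehat{\Sigma}(\partial K) = [L^2(\partial K)]^{mn}$, given any $v \in V(K)$ and $\widehat{v} \in \widehat{V}$ the function $(\widehat{v}^i - v^i)\rvert_{\partial K}$ lies in $L^2(\partial K)$, so there exists $\widehat{\tau} \in \widehat{\Sigma}(\partial K)$ with $\widehat{\tau}_i^\mu \mathbf{n}_\mu = \delta_{ij}(\widehat{v}^j - v^j)\rvert_{\partial K}$. This is exactly the hypothesis of \autoref{thm:ldgFlux}, and since $\lambda$ is bounded (indeed piecewise constant), that theorem yields multisymplecticity. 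This step is identical for all three choices \eqref{eqn:ldg1}--\eqref{eqn:ldg3}, as it never uses the specific degrees of $V(K)$ or $\Sigma(K)$.

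The substantive step is strong conservativity, i.e.\ showing that the extension by zero of $\llbracket \widehat{\sigma} \rrbracket$ lies in $\widehat{V}_0$, which by the discussion preceding \autoref{thm:strongMSCL} forces $\llbracket \widehat{\sigma} \rrbracket = 0$. On an internal facet $e = \partial K^+ \cap \partial K^-$, eliminating \eqref{eqn:numericalFlux} gives $\widehat{\sigma}_i^\mu \mathbf{n}_\mu\rvert_{e^\pm} = \sigma_i^\mu \mathbf{n}_\mu\rvert_{e^\pm} + \lambda(\widehat{u}^i - u^i)\rvert_{e^\pm}$, so I would argue each summand restricts to a degree-$r$ polynomial on $e$. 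The normal trace $\sigma_i^\mu \mathbf{n}_\mu\rvert_{e^\pm}$ lies in $\mathcal{P}_r(e)$ for \eqref{eqn:ldg1}--\eqref{eqn:ldg2} and in $\mathcal{P}_{r-1}(e) \subset \mathcal{P}_r(e)$ for \eqref{eqn:ldg3}, since restriction of a polynomial to the flat facet $e$ (on which $\mathbf{n}$ is constant) does not raise its degree; the trace $\widehat{u}^i\rvert_e$ lies in $\mathcal{P}_r(e)$ by definition of $\widehat{V}$; and $u^i\rvert_{e^\pm}$ lies in $\mathcal{P}_r(e)$ or $\mathcal{P}_{r-1}(e)$ depending on the choice of $V(K)$. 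Because $\lambda$ is piecewise constant on $\partial K$, multiplying by $\lambda$ does not raise the degree, so the penalty contribution stays in $\mathcal{P}_r(e)$; hence $\llbracket \widehat{\sigma} \rrbracket_i\rvert_e \in \mathcal{P}_r(e)$ in every case.

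The only point requiring genuine care is precisely this degree bookkeeping on the penalty term: were $\lambda$ merely $L^\infty$ rather than piecewise constant, the product $\lambda(\widehat{u}-u)$ could leave $[\mathcal{P}_r(e)]^n$ and the extension by zero of $\llbracket \widehat{\sigma} \rrbracket$ would not lie in $\widehat{V}_0$. Once $\llbracket \widehat{\sigma} \rrbracket\rvert_e \in [\mathcal{P}_r(e)]^n$ is verified on every $e \in \mathcal{E}_h^\circ$, the extension by zero lies in $\widehat{V}_0$, so applying \eqref{eqn:conservativity} with $\widehat{v} = \llbracket \widehat{\sigma} \rrbracket$ gives strong conservativity. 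Combined with the multisymplecticity established above, \autoref{thm:strongMSCL} then yields strong multisymplecticity, and since the degree count succeeds uniformly across \eqref{eqn:ldg1}--\eqref{eqn:ldg3}, a single argument covers all three variants.
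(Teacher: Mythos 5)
Your proposal is correct and follows essentially the same two-step argument as the paper's own proof: multisymplecticity via \autoref{thm:ldgFlux} using $\widehat{\Sigma}(\partial K)=\bigl[L^2(\partial K)\bigr]^{mn}$, then strong conservativity via the facet-wise degree count $\llbracket\widehat{\sigma}\rrbracket\rvert_e\in\bigl[\mathcal{P}_r(e)\bigr]^n$ (relying on $\lambda$ being constant on each facet), followed by \autoref{thm:strongMSCL}. Even your cautionary remark that an arbitrary $L^\infty$ penalty would break the polynomial degree bookkeeping appears, almost verbatim, as a parenthetical note in the paper's proof.
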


\begin{proof}
  The flux $ \Phi _K $ is precisely that of \autoref{thm:ldgFlux}. As
  shown in the proof of \autoref{thm:rt-h}, the space
  $ \widehat{ \Sigma } ( \partial K ) = \bigl[ L ^2 ( \partial K )
  \bigr] ^{ m n } $ satisfies the hypotheses of \autoref{thm:ldgFlux},
  so the LDG-H method is multisymplectic.

  Now, for each of \eqref{eqn:ldg1}--\eqref{eqn:ldg3}, we have
  \begin{equation*}
    V (K) \subset \bigl[ \mathcal{P} _r (K) \bigr] ^n , \qquad \Sigma (K) \subset \bigl[ \mathcal{P} _r (K) \bigr] ^{ m n } .
  \end{equation*}
  For any internal facet $ e = \partial K ^+ \cap \partial K ^- $,
  since $ \lambda \rvert _{ e ^\pm } $ are constants, it follows that
  \begin{equation*}
    \widehat{ \sigma } \rvert _{ e ^\pm } = \bigl[ \sigma + \lambda ( \widehat{ u } - u ) \mathbf{n} \bigr] \rvert _{ e ^\pm } \in \bigl[ \mathcal{P} _r (e) \bigr] ^{ m n } ,
  \end{equation*}
  so
  $ \llbracket \widehat{ \sigma } \rrbracket \rvert _e \in \bigl[
  \mathcal{P} _r (e) \bigr] ^n $. (Note that if
  $ \lambda \rvert _{ e ^\pm } $ were arbitrary $ L ^\infty $ penalty
  functions, rather than constants, this would not necessarily be
  true; in fact, $ \widehat{ \sigma } \rvert _{ e ^\pm } $ might not
  be polynomials at all.)  Since this holds for all
  $ e \in \mathcal{E} _h ^\circ $, the extension by zero of
  $ \llbracket \widehat{ \sigma } \rrbracket $ to $ \mathcal{E} _h $
  is in $ \widehat{ V } _0 $. Therefore, the LDG-H method is strongly
  conservative, so \autoref{thm:strongMSCL} implies that it is
  strongly multisymplectic.
\end{proof}

\subsection{The CG-H method}

The hybridized continuous Galerkin (CG-H) method uses the local function spaces
\begin{equation*}
  V (K) = \bigl[ \mathcal{P} _r (K) \bigr] ^n , \qquad \Sigma (K) = \bigl[ \mathcal{P} _{ r -1 } (K) \bigr] ^{ m n } ,
\end{equation*}
i.e., degree-$r$ Lagrange finite elements for $ u ^i , v ^i $ and
degree-$ (r -1 ) $ Lagrange finite elements for
$ \sigma _i ^\mu , \tau _i ^\mu $, for $ \mu = 1 , \ldots, m $ and
$ i = 1, \ldots, n $. The trace spaces are taken to be
\begin{equation*}
  \widehat{ V } = \bigl[ \mathcal{P} _r ^c (\mathcal{E} _h ) \bigr] ^n , \qquad \widehat{ \Sigma } (\partial K) = \bigl\{ v \mathbf{n} \rvert _{ \partial K } : v \in V (K) \bigr\} ,
\end{equation*}
and the local flux functions are
\begin{equation*}
  \Phi _K ( u, \sigma, \widehat{ u } , \widehat{ \sigma } ) = ( \widehat{ u } - u ) \mathbf{n} .
\end{equation*}
Since $ u \in V (K) $, we immediately have
$ u \mathbf{n} \rvert _{ \partial K } \in \widehat{ \Sigma } (\partial
K) $. Moreover, since $ \widehat{ V } $ consists of \emph{continuous}
polynomials, the degrees of freedom for
$ \widehat{ V } (\partial K ) $ are a subset of those for $ V (K) $,
so
$ \widehat{ u } \mathbf{n} \rvert _{ \partial K } \in \widehat{ \Sigma
} (\partial K) $ as well. Hence, taking the flux condition
\eqref{eqn:numericalFlux} with
$ \widehat{ \tau } = ( \widehat{ u } - u ) \mathbf{n} \rvert
_{ \partial K } $ implies
$ \widehat{ u } \rvert _{ \partial K } = u \rvert _{ \partial K } $
for all $ K \in \mathcal{T} _h $.

\begin{remark}
  \label{rmk:semilinearCG-H}
  The CG-H method is so named because it coincides with the classical
  continuous Galerkin method with Lagrange finite elements when
  applied to second-order linear elliptic PDEs of the type considered
  in \autoref{ex:semilinearIntro}, \autoref{ex:semilinearMSCLdiff},
  and \autoref{ex:semilinearMSCLint}, as long as
  $ a = a ^{ \mu \nu } (x) $ is constant on each
  $ K \in \mathcal{T} _h $. See
  \citet{CoGoWa2007,CoGoLa2009,Cockburn2016}, where the relationship
  between CG-H and the technique of ``static condensation'' is also
  discussed.

  More generally, this correspondence also holds for the semilinear system
  \begin{equation*}
    \operatorname{grad} u = a ^{-1} \sigma , \qquad - \operatorname{div} \sigma  = f ( \cdot , u ) .
  \end{equation*}
  Indeed, since \eqref{eqn:numericalFlux} implies
  $ \widehat{ u } \rvert _{ \partial K } = u \rvert _{ \partial K } $,
  substituting $u$ for $ \widehat{ u } $ in \eqref{eqn:localSolverA}
  implies $ \sigma = a \operatorname{grad} u $, as long as $a$ is
  constant on each $ K \in \mathcal{T} _h $. (Otherwise,
  $ a \operatorname{grad} u $ is generally not in $\Sigma$.)  It
  follows that $ u \in C ^0 (U) \cap V $, so for all test functions
  $v$ in this same space vanishing on $ \partial U $,
  summing \eqref{eqn:localSolverB} over $ K \in \mathcal{T} _h $ gives
  \begin{align*}
    \int _U a \operatorname{grad} u \cdot \operatorname{grad} v
    &= \sum _{ K \in \mathcal{T} _h } \int _K a \operatorname{grad} u \cdot \operatorname{grad} v \\
    &= \sum _{ K \in \mathcal{T} _h } \biggl[ \int _K f (\cdot , u ) v + \int _{ \partial K } \widehat{ \sigma } v \cdot \mathbf{n} \biggr] \\
    &= \int _U f ( \cdot , u ) v + \sum _{ K \in \mathcal{T} _h } \int _{ \partial K } \widehat{ \sigma } v \cdot \mathbf{n} \\
    &= \int _U f (\cdot , u ) v .
  \end{align*}
  The boundary term vanishes by \eqref{eqn:conservativity}, since the
  assumption that $v$ is continuous and vanishes on $ \partial U $
  implies $ v \rvert _{ \mathcal{E} _h } \in \widehat{ V } _0
  $. Hence, $u$ is a solution to the continuous Galerkin method for
  $ - \operatorname{div} ( a \operatorname{grad} u ) = f ( \cdot , u )
  $.
\end{remark}

We now prove that the CG-H method is multisymplectic,
although---unlike the other HDG methods considered here---it is
\emph{not strongly multisymplectic} except in dimension $ m = 1 $,
when multisymplecticity is just ordinary symplecticity.

\begin{theorem}
  \label{thm:cg-h}
  The CG-H method is multisymplectic. It is strongly multisymplectic
  (i.e., symplectic) when $ m = 1 $.
\end{theorem}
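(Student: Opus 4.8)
The plan is to reduce both claims to results already established in \autoref{sec:flux}, exploiting the fact that the CG-H flux is exactly the one analyzed in \autoref{thm:ncFlux}. For multisymplecticity, I would start from the observation recorded just before the theorem: testing the flux condition \eqref{eqn:numericalFlux} against $\widehat\tau = (\widehat u - u)\mathbf n \rvert_{\partial K}$, which is admissible because both $u\mathbf n$ and $\widehat u\mathbf n$ lie in $\widehat\Sigma(\partial K)$, forces $\widehat u\rvert_{\partial K} = u\rvert_{\partial K}$ for every solution. Linearizing this identity gives $\widehat v\rvert_{\partial K} = v\rvert_{\partial K}$ for every variation, i.e.\ $\mathrm d(\widehat u^i - u^i) = 0$ on $\partial K$. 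Substituting this into the jump form \eqref{eqn:msclJump} of the local multisymplecticity condition supplied by \autoref{lem:jump}, the integrand carries a vanishing factor, so \eqref{eqn:msclJump} holds identically and the method is multisymplectic.

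For the strong statement when $m = 1$, I would invoke \autoref{thm:strongMSCL}, so that it suffices to verify strong conservativity of $\widehat\sigma$. The key geometric point is that when $m=1$ the skeleton $\mathcal E_h$ is a finite set of nodes containing no ridges (codimension-two faces), so the continuity requirement defining $\widehat V = [\mathcal P_r^c(\mathcal E_h)]^n$ is vacuous and $\widehat V_0$ consists of all nodal functions vanishing at $\partial U$. Since the normal jump $\llbracket\widehat\sigma\rrbracket$ assigns a single value to each interior node, its extension by zero to $\mathcal E_h$ automatically lies in $\widehat V_0$; testing the conservativity condition \eqref{eqn:conservativity} against $\widehat v = \llbracket\widehat\sigma\rrbracket$ then yields $\llbracket\widehat\sigma\rrbracket = 0$. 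Thus $\widehat\sigma$ is strongly conservative, and \autoref{thm:strongMSCL} upgrades multisymplecticity to strong multisymplecticity; for $m=1$ this is just symplecticity.

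The main obstacle---and the reason the statement is so delicate---is that the full hypothesis of \autoref{thm:ncFlux} (matching the normal trace of every $\tau\in\Sigma(K)$ by some $\widehat\tau\in\widehat\Sigma(\partial K)$) does not actually hold for CG-H when $m>1$: elements of $\widehat\Sigma(\partial K)$ have normal components $v^i$ that are continuous across ridges, whereas the normal trace $\tau_i^\mu \mathbf n_\mu$ of a generic $\tau\in[\mathcal P_{r-1}(K)]^{mn}$ jumps across ridges because $\mathbf n$ does. So I cannot apply \autoref{thm:ncFlux} verbatim; the multisymplecticity argument must be routed through the weaker consequence $\widehat u = u$ on $\partial K$, which is all that is really needed. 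This same ridge phenomenon is exactly what breaks strong conservativity for $m>1$: there the jump $\llbracket\widehat\sigma\rrbracket$ need not be continuous across ridges and hence escapes $\widehat V_0$, leaving only weak conservativity. I would therefore expect the $m=1$ hypothesis to be essential, with the case $m>1$ requiring the separate counterexample promised in the introduction.
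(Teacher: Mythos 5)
Your proposal is correct and follows essentially the same route as the paper's own proof: multisymplecticity is obtained from the identity $\widehat{u}\rvert_{\partial K} = u\rvert_{\partial K}$ (forced by the flux condition), so that \eqref{eqn:msclJump} holds trivially via \autoref{lem:jump}, and the $m=1$ case is handled by observing that the continuity constraints on $\widehat{V}$ become vacuous at vertices, so $\llbracket \widehat{\sigma} \rrbracket$ extends by zero into $\widehat{V}_0$, giving strong conservativity and hence strong multisymplecticity by \autoref{thm:strongMSCL}. Your closing observations---that the hypotheses of \autoref{thm:ncFlux} genuinely fail for CG-H when $m>1$, and that the same continuity/ridge phenomenon is what obstructs strong conservativity---are precisely the points the paper records in its remark following the theorem and in the counterexample proposition that comes after.
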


\begin{proof}
  Since $ ( \widehat{ u } - u ) \rvert _{ \partial K } = 0 $, we see
  directly that \eqref{eqn:msclJump} holds, so multisymplecticity
  follows by \autoref{lem:jump}.

  When $ m = 1 $, facets are simply vertices, so $ \mathcal{E} _h $ is
  discrete and finite, and the continuity conditions on
  $ \widehat{ V } $ are trivial. Hence,
  $ \bigl[ \mathcal{P} _r ^c ( \mathcal{E} _h ) \bigr] ^n = \bigl[
  \mathcal{P} _r ( \mathcal{E} _h ) \bigr] ^n = \mathbb{R} ^{ \lvert
    \mathcal{E} _h \rvert n } $. The result follows immediately from
  the trivial observation that
  $ \llbracket \widehat{ \sigma } \rrbracket \rvert _e \in \mathbb{R}
  ^n $ at each internal vertex $e$.
\end{proof}

\begin{remark}
  Although $ \Phi _K $ is the same flux function as in
  \autoref{thm:ncFlux}, the hypotheses of that theorem do not hold for
  the CG-H method. Here, $ \widehat{ \Sigma } ( \partial K ) $
  consists only of $ \widehat{ \tau } $ whose normal traces are
  continuous on $ \partial K $, while this is not necessarily true of
  $ \tau \rvert _{ \partial K } $ for arbitrary
  $ \tau \in \Sigma (K) $.
\end{remark}

\begin{example}[CG-H for Laplace's equation in $ \mathbb{R}^2 $]
  \label{ex:laplaceCG-H}
  Let $ m = 2 $, $ n = 1 $, and consider the mixed form of Laplace's
  equation,
  \begin{equation*}
    \operatorname{grad} u = \sigma , \qquad - \operatorname{div} \sigma = 0 .
  \end{equation*}
  Let us apply the lowest-order CG-H method, with $ r = 1 $, so that
  for each $ K \in \mathcal{T} _h $, we have
  $ V (K) = \mathcal{P} _1 (K) $ and
  $ \Sigma (K) = \bigl[ \mathcal{P} _0 (K) \bigr] ^2 $.

  As discussed in \autoref{rmk:semilinearCG-H}, we have
  $ \widehat{ u } \rvert _{ \partial K } = u \rvert _{ \partial K } $
  and $ \sigma \rvert _{ K } = \operatorname{grad} u \rvert _K
  $. Therefore, $ \widehat{ \sigma } \rvert _{ \partial K } $ is
  determined by \eqref{eqn:localSolverB}, which in this case states
  \begin{equation*}
    \int _{ \partial K } \widehat{ \sigma } v \cdot \mathbf{n} = \int _K \operatorname{grad} u \cdot \operatorname{grad} v , \qquad \forall v \in V (K) .
  \end{equation*}
  Since
  $ \widehat{ \sigma } \rvert _{ \partial K } = w \mathbf{n} \rvert
  _{ \partial K } $ for some $ w \in V (K) $, we may rewrite this as
  \begin{equation}
    \label{eqn:sigmaHatLaplace}
    \int _{ \partial K } w v = \int _K \operatorname{grad} u \cdot \operatorname{grad} v , \qquad \forall v \in V (K) .
  \end{equation}
  However, $w \rvert _{ \partial K } $ is generally \emph{not} equal
  to
  $ \operatorname{grad} u \cdot \mathbf{n} \rvert _{ \partial K } $,
  since the latter is piecewise constant (and generally discontinuous)
  on $ \partial K $, whereas the former must be continuous and
  linear. Instead, we must set up a linear system and solve for $w$ in
  terms of $u$.

  For simplicity, let us suppose that $K$ is isometric to the
  standard, equilateral reference triangle in $\mathbb{R}^3$, defined
  by
  \begin{equation*}
    T \coloneqq \bigl\{  ( x, y , z ) \in \mathbb{R}^3 _{ \geq 0 } : x + y + z = 1 \bigr\} .
  \end{equation*}
  On $T$, the Lagrange basis of linear ``hat functions'' simply
  consists of the coordinate functions $x$, $y$, and $z$, and we can write
  \begin{equation*}
    u = u _1 x + u _2 y + u _3 z , \qquad w = w _1 x + w _2 y + w _3 z .
  \end{equation*}
  Solving the linear system corresponding to
  \eqref{eqn:sigmaHatLaplace} yields, after a calculation,
  \begin{align*}
    w _1 &= \frac{ \sqrt{ 6 }}{ 6 } ( 2 u _1 - u _2 - u _3 ) ,\\
    w _2 &= \frac{ \sqrt{ 6 }}{ 6 } (- u _1 +2 u _2 - u _3 ) ,\\
    w _3 &= \frac{ \sqrt{ 6 }}{ 6 } ( - u _1 - u _2 +2 u _3 ) .
  \end{align*}
  The multisymplectic form restricted to $ \partial T $ is therefore
  $ (\mathrm{d} u \wedge \mathrm{d} w ) \mathbf{n} \rvert _{ \partial
    T } $, so the multisymplectic conservation law states that
  $ \int _{ \partial T } \mathrm{d} u \wedge \mathrm{d} w = 0 $.

  Let $ e _{ i j } $ denote the edge in $ \partial T $ going from the
  $i$th standard basis vector to the $j$th standard basis vector in
  $\mathbb{R}^3 $. Using the above expressions for $w$ in terms of
  $u$, another calculation shows that
  \begin{subequations}
    \label{eqn:cg-hBoundaryIntegrals}
  \begin{align}
    \int _{ e _{ 1 2 } } \mathrm{d} u \wedge \mathrm{d} w &= \frac{ \sqrt{ 3 } }{ 6 } ( \mathrm{d} u _3 \wedge \mathrm{d} u _1 - \mathrm{d} u _2 \wedge \mathrm{d} u _3 ), \\
    \int _{ e _{ 2 3 } } \mathrm{d} u \wedge \mathrm{d} w &= \frac{ \sqrt{ 3 } }{ 6 } ( \mathrm{d} u _1 \wedge \mathrm{d} u _2 - \mathrm{d} u _3 \wedge \mathrm{d} u _1 ) ,    \\
    \int _{ e _{ 3 1 } } \mathrm{d} u \wedge \mathrm{d} w &= \frac{ \sqrt{ 3 } }{ 6 } ( \mathrm{d} u _2 \wedge \mathrm{d} u _3 - \mathrm{d} u _1 \wedge \mathrm{d} u _2 )  ,
  \end{align}
\end{subequations}
  from which it is immediately apparent that
  \begin{equation*}
    \int _{ \partial T } \mathrm{d} u \wedge \mathrm{d} w = \int _{ e _{ 1 2 } } \mathrm{d} u \wedge \mathrm{d} w + \int _{ e _{ 2 3 } } \mathrm{d} u \wedge \mathrm{d} w + \int _{ e _{ 3 1 } } \mathrm{d} u \wedge \mathrm{d} w = 0 ,
  \end{equation*}
  so the method is indeed multisymplectic on $T$, following
  \autoref{thm:cg-h}.
\end{example}

We now show that strong multisymplecticity does not hold for the CG-H
method when $ m > 1 $, so the result of \autoref{thm:cg-h} is the best
we can hope for.  The proof uses a counterexample based on
\autoref{ex:laplaceCG-H}.

\begin{proposition}
  The CG-H method is not strongly multisymplectic.
\end{proposition}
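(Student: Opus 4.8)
The plan is to disprove strong multisymplecticity by an explicit counterexample in the simplest nontrivial dimension, $ m = 2 $, built directly on \autoref{ex:laplaceCG-H}. The starting point is that, by \autoref{thm:cg-h}, the CG-H method is (locally) multisymplectic, so $ \int _{ \partial K } ( \mathrm{d} \widehat{ u } ^i \wedge \mathrm{d} \widehat{ \sigma } _i ^\mu ) \,\mathrm{d}^{m-1} x_\mu = 0 $ for every $ K \in \mathcal{T} _h $. First I would record the combinatorial identity relating $ \sum _{ K \in \mathcal{K} } \int _{ \partial K } $ to $ \int _{ \partial ( \overline{ \bigcup \mathcal{K} } ) } $ (the same one underlying the proof of \autoref{thm:strongMSCL}): since the internal facets are counted twice, with $ \widehat{ u } $ single-valued but $ \widehat{ \sigma } $ double-valued, local multisymplecticity collapses the strong form to a sum over the internal facets of $ \mathcal{K} $,
\[
  \int _{ \partial ( \overline{ \bigcup \mathcal{K} } ) } ( \mathrm{d} \widehat{ u } ^i \wedge \mathrm{d} \widehat{ \sigma } _i ^\mu ) \,\mathrm{d}^{m-1} x_\mu = - \sum _{ e } \int _e \bigl( \mathrm{d} \widehat{ u } ^i \wedge \mathrm{d} \llbracket \widehat{ \sigma } \rrbracket _i \bigr) \,\mathrm{d}^{m-1} x .
\]
Thus strong multisymplecticity fails as soon as some internal facet carries a nonzero integrand, which is exactly the weak-versus-strong conservativity distinction discussed before \autoref{thm:strongMSCL}; it therefore suffices to exhibit one such facet.

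For the construction I would take $ n = 1 $ and the mixed Laplace problem of \autoref{ex:laplaceCG-H} with $ r = 1 $, and let $ \mathcal{T} _h = \{ K ^+ , K ^- \} $ consist of two congruent equilateral triangles sharing a single edge $ e = \partial K ^+ \cap \partial K ^- $, so that $ U = \overline{ K ^+ \cup K ^- } $ is a rhombus. I label the two endpoints of $ e $ with nodal values $ u _1 , u _2 $ and the opposite apexes with $ u _3 $ (for $ K ^+ $) and $ u _4 $ (for $ K ^- $). Because all four vertices lie on $ \partial U $, the space $ \widehat{ V } _0 $ of continuous traces vanishing on $ \partial U $ is trivial; hence the conservativity condition \eqref{eqn:conservativity} is vacuous, and the family of solutions is parametrized freely by $ ( u _1 , u _2 , u _3 , u _4 ) \in \mathbb{R} ^4 $, with $ \sigma = \operatorname{grad} u $ and $ \widehat{ \sigma } $ on each triangle determined from its nodal data by the local solver \eqref{eqn:localSolverB}. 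In particular $ \mathrm{d} u _1 , \ldots, \mathrm{d} u _4 $ are linearly independent $ 1 $-forms on the solution family.

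Next I would invoke \autoref{ex:laplaceCG-H}: since the computation there uses only that $ K $ is equilateral, the formulas \eqref{eqn:cg-hBoundaryIntegrals} apply verbatim to each of $ K ^\pm $. The contribution of $ K ^+ $ to the facet term $ \int _{ e ^+ } ( \mathrm{d} \widehat{ u } \wedge \mathrm{d} \widehat{ \sigma } ^\mu ) \,\mathrm{d}^{m-1} x_\mu $ equals the shared-edge line of \eqref{eqn:cg-hBoundaryIntegrals}, which I rewrite as $ \frac{ \sqrt{ 3 } }{ 6 } \, \mathrm{d} u _3 \wedge \mathrm{d} ( u _1 + u _2 ) $; by the same formula (with the apex value $ u _4 $) the contribution of $ K ^- $ is $ \pm \frac{ \sqrt{ 3 } }{ 6 } \, \mathrm{d} u _4 \wedge \mathrm{d} ( u _1 + u _2 ) $, the sign recording the reversed orientation and outer normal of $ e $ as a facet of $ K ^- $. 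Adding them gives
\[
  \int _e \bigl( \mathrm{d} \widehat{ u } \wedge \mathrm{d} \llbracket \widehat{ \sigma } \rrbracket \bigr) \,\mathrm{d}^{m-1} x = \frac{ \sqrt{ 3 } }{ 6 } \, ( \mathrm{d} u _3 \pm \mathrm{d} u _4 ) \wedge \mathrm{d} ( u _1 + u _2 ) ,
\]
a nonzero $ 2 $-form for either sign, since $ \mathrm{d} u _3 \pm \mathrm{d} u _4 $ and $ \mathrm{d} ( u _1 + u _2 ) $ are linearly independent. Taking $ \mathcal{K} = \mathcal{T} _h $, so that $ \partial ( \overline{ \bigcup \mathcal{K} } ) = \partial U $, the displayed reduction then produces a nonvanishing value for \eqref{eqn:strongMSCL}, which is the desired contradiction.

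The only delicate point is the orientation and outer-normal convention on $ e $ as seen from $ K ^- $ rather than $ K ^+ $, which fixes the undetermined sign above; but I expect this to be immaterial to the conclusion, precisely because the two triangles' contributions depend on the \emph{disjoint} apex variables $ u _3 $ and $ u _4 $ and so cannot cancel for any choice of sign. The conceptual crux—already isolated in the discussion preceding \autoref{thm:strongMSCL}—is simply that $ \widehat{ \sigma } $ on $ e $ depends, through the local solver, on the apex value of each adjacent element, so its normal jump $ \llbracket \widehat{ \sigma } \rrbracket $ neither vanishes nor lies in $ \widehat{ V } _0 $: the CG-H method is only weakly conservative, and this is exactly what obstructs the passage from weak to strong multisymplecticity.
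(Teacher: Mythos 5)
Your proposal is correct and is essentially the paper's own proof: the same two-equilateral-triangle counterexample with $ \widehat{ V } _0 = \{ 0 \} $, the same per-edge formulas \eqref{eqn:cg-hBoundaryIntegrals}, and the same nonvanishing $2$-form $ \frac{ \sqrt{ 3 } }{ 6 } ( \mathrm{d} u _1 + \mathrm{d} u _2 ) \wedge ( \mathrm{d} u _3 + \mathrm{d} u _4 ) $ (in your labeling); the only difference is bookkeeping, since the paper sums the four boundary edges directly while you equivalently sum the shared edge and negate using local multisymplecticity. For the record, your undetermined sign is $+$ (each triangle's shared-edge contribution is evaluated with that triangle's own outer normal already built into \eqref{eqn:cg-hBoundaryIntegrals}, so no flip occurs), but as you correctly observe, the conclusion is insensitive to this choice.
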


\begin{proof}
  \begin{figure}
    \centering
  \begin{tikzpicture}[scale=2]
    \node (1) [circle, fill, label=left:{$ u _1 $}] at (-{sqrt(3/2)}, 0) {};
    \node (2) [circle, fill, label=below:{$ u _2 $}] at (0, {-sqrt(2)/2}) {};
    \node (3) [circle, fill, label=above:{$ u _3 $}] at (0, {sqrt(2)/2}) {};
    \node (4) [circle, fill, label=right:{$ u _4 $}] at ({sqrt(3/2)}, 0) {};
    \draw [semithick] (1)--(2) node [pos=.4, label=below:{$ e _{ 1 2 } $}] {}
    --(4) node [pos=.6, label=below:{$ e _{ 2 4 } $}] {}
    --(3) node [pos=.4, label=above:{$ e _{ 3 4 } $}] {}
    --(1) node [pos=.6, label=above:{$ e _{ 1 3 } $}] {};
    \draw [semithick] (2)--(3) node [midway, label={[label distance=-.5em]right:{$ e _{ 2 3 } $}}] {};
  \end{tikzpicture}
  \caption{A triangulation on which the CG-H method is not strongly
    multisymplectic for Laplace's equation.\label{fig:twoTriangles}}
  \end{figure}
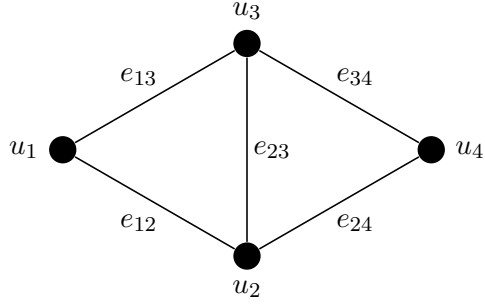

  Consider the mixed form of Laplace's equation on the domain
  $U \subset \mathbb{R}^2 $ triangulated by two equilateral triangles,
  as shown in \autoref{fig:twoTriangles}. For simplicity, as in
  \autoref{ex:laplaceCG-H}, we suppose that each of these is isometric
  to the standard reference triangle $T$, so that the edge lengths are
  all $ \sqrt{ 2 } $. Note that $ \widehat{ V } _0 = \{ 0 \} $, since
  the degrees of freedom for $ \widehat{ V } $ all lie on
  $ \partial U $, so the conservativity condition
  \eqref{eqn:conservativity} is trivial. Hence, there are no
  constraints on
  $ ( u , \sigma , \widehat{ u } , \widehat{ \sigma } ) $ other than
  the local conditions
  \eqref{eqn:localSolverA}--\eqref{eqn:numericalFlux} on each
  triangle, as discussed in \autoref{ex:laplaceCG-H}.

  From the calculation in \eqref{eqn:cg-hBoundaryIntegrals}, we have
  \begin{align*}
    \int _{ e _{1 2} } \mathrm{d} u \wedge \mathrm{d} w + \int _{ e _{ 1 3 } } \mathrm{d} u \wedge \mathrm{d} w
    &= \frac{ \sqrt{ 3 } }{ 6 } ( \mathrm{d} u _3 \wedge \mathrm{d} u _1 - \mathrm{d} u _1 \wedge \mathrm{d} u _2  ) \\
    &= \frac{ \sqrt{ 3 } }{ 6 } ( \mathrm{d} u _2 + \mathrm{d} u _3 ) \wedge \mathrm{d} u _1  ,
  \end{align*}
  and similarly,
  \begin{equation*}
    \int _{ e _{3 4} } \mathrm{d} u \wedge \mathrm{d} w + \int _{ e _{ 2 4 } } \mathrm{d} u \wedge \mathrm{d} w = \frac{ \sqrt{ 3 } }{ 6 } ( \mathrm{d} u _2 + \mathrm{d} u _3 ) \wedge \mathrm{d} u _4 ,
  \end{equation*}
  so adding these gives
  \begin{equation}
    \label{eqn:cghBoundaryIntegral}
    \int _{ \partial U } \mathrm{d} u \wedge \mathrm{d} w = \frac{ \sqrt{ 3 } }{ 6 } ( \mathrm{d} u _2 + \mathrm{d} u _3 ) \wedge ( \mathrm{d} u _1 + \mathrm{d} u _4 ) \neq 0 .
  \end{equation} 
  We conclude that this is nonzero since $ u _1 , \ldots , u _4 $ are
  independent degrees of freedom that may be varied independently. In
  particular, if we take variations
  $ ( v , \operatorname{grad} v, v, \widehat{ \tau } ) $ and
  $ ( v ^\prime , \operatorname{grad} v ^\prime , v ^\prime ,
  \widehat{ \tau } ^\prime ) $ with
  \begin{equation*}
    v _2 = v ^\prime _1 = 1 , \qquad v _1 = v _3 = v _4 = v _2 ^\prime = v _3 ^\prime = v _4 ^\prime = 0 , 
  \end{equation*}
  we have
  \begin{align*}
    \int _{ \partial U } ( v \widehat{ \tau } ^\prime - v ^\prime \widehat{ \tau } ) \cdot \mathbf{n}
    = \frac{ \sqrt{ 3 }}{ 6 } \bigl[ ( v _2 + v _3 )( v _1 ^\prime + v _4 ^\prime ) - ( v _2 ^\prime + v _3 ^\prime ) ( v _1 + v _4 ) \bigr]
    = \frac{ \sqrt{ 3 }}{ 6 } \neq 0 .
  \end{align*} 
  Hence, the CG-H method is not strongly multisymplectic.
\end{proof}

\begin{remark}
  The failure of strong multisymplecticity, in this example, may also
  be seen via the failure of the bilinear form
  $ ( v, v ^\prime ) \mapsto \int _{ \partial U } v \widehat{ \tau }
  ^\prime \cdot \mathbf{n} $ to be symmetric. Indeed, if we write
  $ \mathbf{v} = ( v _1, v _2 , v _3 , v _4 ) $ and
  $ \mathbf{v} ^\prime = ( v _1 ^\prime, v _2 ^\prime , v _3 ^\prime ,
  v _4 ^\prime ) $, then we may represent this as the quadratic form,
  \begin{equation*}
    \int _{ \partial U } v \widehat{ \tau } ^\prime \cdot \mathbf{n} = \frac{ \sqrt{ 3 } }{ 6 } \mathbf{v} ^T \begin{bmatrix}
      2 & -1 & -1 & 0 \\
      0 & 2 & -2 & 0 \\
      0 & -2 & 2 & 0 \\
      0 & -1 & -1 & 2 
    \end{bmatrix}
    \mathbf{v} ^\prime , 
  \end{equation*}
  which is immediately seen not to be symmetric. Neglecting the scalar
  factor of $ \sqrt{ 3 } / 6 $, the antisymmetrization of this matrix
  is
  \begin{align*}
    \begin{bmatrix}
      2 & -1 & -1 & 0 \\
      0 & 2 & -2 & 0 \\
      0 & -2 & 2 & 0 \\
      0 & -1 & -1 & 2
    \end{bmatrix} - \begin{bmatrix}
      2 & -1 & -1 & 0 \\
      0 & 2 & -2 & 0 \\
      0 & -2 & 2 & 0 \\
      0 & -1 & -1 & 2
    \end{bmatrix} ^T &=
                       \begin{bmatrix}
                         0 & - 1 & - 1 & 0 \\
                         1 & 0 & 0 & 1 \\
                         1 & 0 & 0 & 1 \\
                         0 & - 1 & - 1 & 0 \\
                       \end{bmatrix} \\
        &=
          \begin{bmatrix}
            0 \\
            1 \\
            1 \\
            0
          \end{bmatrix} \otimes
    \begin{bmatrix}
      1 \\ 0 \\ 0 \\ 1
    \end{bmatrix}
    -  
    \begin{bmatrix}
      1 \\ 0 \\ 0 \\ 1
    \end{bmatrix}
    \otimes 
    \begin{bmatrix}
      0 \\
      1 \\
      1 \\
      0
    \end{bmatrix}.
  \end{align*}
  This is precisely the matrix corresponding to
  \begin{equation*}
    ( \mathrm{d} u _2 + \mathrm{d} u _3 ) \otimes ( \mathrm{d} u _1 + \mathrm{d} u _4 ) - ( \mathrm{d} u _1 + \mathrm{d} u _4 ) \otimes ( \mathrm{d} u _2 + \mathrm{d} u _3 ) = ( \mathrm{d} u _2 + \mathrm{d} u _3 ) \wedge ( \mathrm{d} u _1 + \mathrm{d} u _4 ) ,
  \end{equation*}
  which agrees with \eqref{eqn:cghBoundaryIntegral}.
\end{remark}

\begin{remark}
  \citet{CoGoLa2009} observe that CG-H can be seen as a limiting case
  of the LDG-H method \eqref{eqn:ldg3} where the penalty
  $ \lambda \equiv + \infty $. They also consider more general LDG-H
  methods where $ \lambda $ is infinite on some elements and finite on
  others \citep[Section 3.4]{CoGoLa2009}. However, allowing the
  penalty to be infinite imposes continuity conditions on
  $ \widehat{ V } $, meaning that conservativity (and thus
  multisymplecticity) is only weak rather than strong.
\end{remark}

\subsection{The NC-H method}

The hybridized nonconforming (NC-H) method uses the local function
spaces
\begin{equation*}
  V (K) = \bigl[ \mathcal{P} _r (K) \bigr] ^n , \qquad \Sigma (K) = \bigl[ \mathcal{P} _{ r -1 } (K) \bigr] ^{ m n } ,
\end{equation*}
just as in the CG-H method. The trace spaces are
\begin{equation*}
  \widehat{ V } = \bigl[ \mathcal{P} _{r-1} ( \mathcal{E} _h ) \bigr] ^n , \qquad \widehat{ \Sigma } ( \partial K ) = \bigl\{ \widehat{ w }  \mathbf{n} : \widehat{ w } \rvert _e  \in [ \mathcal{P} _{ r -1 } (e) ] ^n,\ \forall e \in \partial K  \bigr\} ,
\end{equation*}
and the local flux functions are
\begin{equation*}
  \Phi _K ( u, \sigma, \widehat{ u } , \widehat{ \sigma } ) = ( \widehat{ u } - u ) \mathbf{n} .
\end{equation*}
Unlike CG-H,
$ ( \widehat{ u } - u ) \mathbf{n} \rvert _{ \partial K } $ is
generally not in $ \widehat{ \Sigma } ( \partial K ) $, so we cannot
conclude that $ \widehat{ u } $ equals $u$ on $ \partial K $, except
in the weak sense of \eqref{eqn:numericalFlux}.

\begin{theorem}
  The NC-H method is strongly multisymplectic.
\end{theorem}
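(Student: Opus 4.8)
The plan is to mirror the two-step strategy used for the RT-H, BDM-H, and LDG-H methods: first establish multisymplecticity through \autoref{thm:ncFlux}, whose flux function $(\widehat{u} - u)\mathbf{n}$ is exactly the one used here, and then establish strong conservativity so that \autoref{thm:strongMSCL} applies.

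First I would check the hypothesis of \autoref{thm:ncFlux}: that every $\tau \in \Sigma(K)$ has its normal trace realized by some $\widehat{\tau} \in \widehat{\Sigma}(\partial K)$. The key observation is that $\mathbf{n}$ is constant on each facet $e$ of the simplex $K$, so for $\tau \in \Sigma(K) = [\mathcal{P}_{r-1}(K)]^{mn}$ the normal trace $\tau_i^\mu \mathbf{n}_\mu|_e$ is a degree-$(r-1)$ polynomial on $e$. On the other hand, the trace space $\widehat{\Sigma}(\partial K)$ is by construction $\{\widehat{w}\mathbf{n} : \widehat{w}|_e \in [\mathcal{P}_{r-1}(e)]^n\}$, and for $\widehat{\tau} = \widehat{w}\mathbf{n}$ one has $\widehat{\tau}_i^\mu \mathbf{n}_\mu = \widehat{w}_i$ since $\mathbf{n}$ is a unit normal. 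Thus choosing $\widehat{w}_i|_e = \tau_i^\mu \mathbf{n}_\mu|_e$ produces the desired $\widehat{\tau}$, verifying the hypothesis and giving multisymplecticity.

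Next I would establish strong conservativity. On an internal facet $e = \partial K^+ \cap \partial K^-$, the condition $\widehat{\sigma}|_{\partial K^\pm} \in \widehat{\Sigma}(\partial K^\pm)$ shows, by the same unit-normal computation, that the normal traces $\widehat{\sigma}_i^\mu \mathbf{n}_\mu|_{e^\pm}$ are degree-$(r-1)$ polynomials on $e$, hence $\llbracket \widehat{\sigma} \rrbracket|_e \in [\mathcal{P}_{r-1}(e)]^n$. Since $\widehat{V} = [\mathcal{P}_{r-1}(\mathcal{E}_h)]^n$ consists of \emph{discontinuous} degree-$(r-1)$ traces, the extension by zero of $\llbracket \widehat{\sigma} \rrbracket$ to $\mathcal{E}_h$ lies in $\widehat{V}_0$; taking $\widehat{v} = \llbracket \widehat{\sigma} \rrbracket$ in the conservativity condition \eqref{eqn:conservativity} then forces $\llbracket \widehat{\sigma} \rrbracket = 0$. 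With strong conservativity in hand, \autoref{thm:strongMSCL} yields strong multisymplecticity. The computations are routine; the only real point requiring care is that $\widehat{\Sigma}(\partial K)$ and $\widehat{V}$ are both built from the same discontinuous degree-$(r-1)$ polynomials that arise as normal traces of $\Sigma(K)$, and it is precisely this discontinuity---in contrast to the continuity of $\widehat{V}$ in CG-H, which obstructs strong conservativity---that makes the argument go through.
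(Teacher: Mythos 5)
Your proof is correct and follows essentially the same route as the paper's: verify the hypothesis of \autoref{thm:ncFlux} by taking $\widehat{\tau} = (\tau_i^\nu \mathbf{n}_\nu)\mathbf{n}$, then show strong conservativity facet-by-facet (using that $\widehat{V}$ is built from \emph{discontinuous} degree-$(r-1)$ traces, so $\llbracket \widehat{\sigma} \rrbracket$ extends by zero into $\widehat{V}_0$) and conclude via \autoref{thm:strongMSCL}. Your added remarks---that $\mathbf{n}$ is constant on each simplex facet so the normal trace of $\tau$ is indeed in $\mathcal{P}_{r-1}(e)$, and the contrast with CG-H---are correct elaborations of steps the paper leaves implicit.
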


\begin{proof}
  The flux $ \Phi _K $ is that considered in \autoref{thm:ncFlux}, so
  it suffices to show that the hypothesis of that theorem holds, i.e.:
  for all $ \tau \in \Sigma (K) $, there exists
  $ \widehat{ \tau } \in \widehat{ \Sigma } ( \partial K ) $ such that
  $ \widehat{ \tau } _i ^\mu \mathbf{n} _\mu = \tau _i ^\mu \mathbf{n}
  _\mu \rvert _{ \partial K } $ for $ i = 1, \ldots, n $. Given
  $ \tau \in \Sigma (K) $, this condition is satisfied by
  $ \widehat{ \tau } _i ^\mu = \tau _i ^\nu \mathbf{n} _\nu \mathbf{n}
  ^\mu \rvert _{ \partial K } $, i.e., the projection of
  $ \tau \rvert _{ \partial K } $ onto the unit normal, so the NC-H
  method is multisymplectic.

  Next, if $ e = \partial K ^+ \cap \partial K ^- $ is an interior
  facet, then the definition of
  $ \widehat{ \Sigma } ( \partial K ^\pm ) $ implies that
  $ \widehat{ \sigma } \rvert _{ e ^\pm } \in \bigl[ \mathcal{P}
  _{r-1} ( e )\bigr] ^n $, so
  $ \llbracket \widehat{ \sigma } \rrbracket \rvert _e \in \bigl[
  \mathcal{P} _{ r -1 } (e) \bigr] ^n $. Since this holds for all
  $ e \in \mathcal{E} _h ^\circ $, the extension by zero of
  $ \llbracket \widehat{ \sigma } \rrbracket $ to $ \mathcal{E} _h $
  is in $ \widehat{ V } _0 $. Therefore, the NC-H method is strongly
  conservative, so \autoref{thm:strongMSCL} implies that it is strongly
  multisymplectic.
\end{proof}

\subsection{The IP-H methods}
We finally consider the special case of the hybridized interior
penalty method (IP-H), which---unlike the methods considered
above---is somewhat idiosyncratic to semilinear elliptic
systems. Consider a system of the form
\begin{equation}
  \label{eqn:semilinearSystem}
  \partial _\mu u ^i = a _{ \mu \nu } ^{ i j } \sigma _j ^\nu , \qquad -\partial _\mu \sigma _i ^\mu = \frac{ \partial F }{ \partial u ^i } ,
\end{equation}
which generalizes the scalar ($ n = 1 $) semilinear PDEs discussed in
\autoref{sec:pdes} to $ n \geq 1 $. Here,
$ a = a ^{ \mu \nu } _{ i j } (x) $ is a symmetric, positive-definite
$ m n \times m n $ matrix with inverse
$ a ^{ i j } _{ \mu \nu } (x) \coloneqq \bigl( a _{ i j } ^{ \mu \nu }
(x) \bigr) ^{-1} $. These are the de~Donder--Weyl equations for the
Hamiltonian
\begin{equation*}
  H ( x , u , \sigma ) = \frac{1}{2} a _{ \mu \nu } ^{ i j } (x) \sigma _i ^\mu \sigma _j ^\nu + F ( x, u ) ,
\end{equation*}
so in particular \eqref{eqn:semilinearSystem} is a canonical
multisymplectic system of PDEs.

For such a system, the IP-H method uses the local function spaces
\begin{subequations}
  \begin{equation}
    \label{eqn:ip-h}
    V (K) = \bigl[ \mathcal{P} _{ r } (K) \bigr] ^n , \qquad \Sigma (K) = \bigl[ \mathcal{P} _r (K) \bigr] ^{m n} .
  \end{equation}
  We also consider the ``IP-H-like'' method, suggested by
  \citet[p.~1351]{CoGoLa2009}, which uses the function spaces
  \begin{equation}
    \label{eqn:ip-h-like}
    V (K) = \bigl[ \mathcal{P} _{ r } (K) \bigr] ^n , \qquad \Sigma (K) = \bigl[ \mathcal{P} _{r-1} (K) \bigr] ^{m n} .
  \end{equation} 
\end{subequations}
For both methods, the trace spaces are taken to be
\begin{equation*}
  \widehat{ V } = \bigl[ \mathcal{P} _r ( \mathcal{E} _h ) \bigr] ^n , \qquad \widehat{ \Sigma } ( \partial K ) = \bigl[ L ^2 ( \partial K ) \bigr] ^{ m n } .
\end{equation*}
Based on the observation that the classical solution satisfies
$ \sigma _i ^\mu = ( a \operatorname{grad} u ) _i ^\mu = a _{ i j } ^{
  \mu \nu } \partial _\nu u ^j $, these methods take the local flux
functions to be
\begin{equation*}
  \Phi _K ( u , \sigma , \widehat{ u } , \widehat{ \sigma } ) = ( \widehat{ \sigma } - a \operatorname{grad} u  ) - \lambda ( \widehat{ u } - u ) \mathbf{n} .
\end{equation*}
Thus, the IP-H and ``IP-H-like'' methods are essentially the LDG-H
methods \eqref{eqn:ldg2} and \eqref{eqn:ldg3}, respectively, except
with $\sigma$ replaced by $ a \operatorname{grad} u $ in the local
flux functions. As with LDG-H methods, the penalty function
$ \lambda \rvert _{ \partial K } $ is taken to be piecewise constant
on $ \partial K $ for each $ K \in \mathcal{T} _h $, and on internal
facets $ e = \partial K ^+ \cap \partial K ^- $, the constants
$ \lambda \rvert _{ e ^\pm } $ need not be equal to one another.  The
flux condition \eqref{eqn:numericalFlux} states that
\begin{equation*}
  \widehat{ \sigma } _i ^\mu \rvert _{ \partial K } = \bigl[ a _{ i j } ^{ \mu \nu } \partial _\nu u ^j + \lambda \delta _{ i j } ( \widehat{ u } ^j - u ^j ) \mathbf{n} ^\mu \bigr] \rvert _{ \partial K } ,
\end{equation*}
so we may eliminate \eqref{eqn:numericalFlux} and substitute the
expression on the right-hand side wherever $ \widehat{ \sigma } $
appears in the remaining equations. We also assume, following
\citet{CoGoLa2009}, that $a$ is constant on each
$ K \in \mathcal{T} _h $.

\begin{theorem}
  \label{thm:ip}
  For the semilinear system \eqref{eqn:semilinearSystem} where $a$ is
  constant on each $ K \in \mathcal{T} _h $, the IP-H and
  ``IP-H-like'' methods are strongly multisymplectic.
\end{theorem}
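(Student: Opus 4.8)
The plan is to prove that, for solutions of the flux formulation \eqref{eqn:fluxHDG}, both methods are multisymplectic and strongly conservative, and then to conclude strong multisymplecticity from \autoref{thm:strongMSCL}. Since \eqref{eqn:semilinearSystem} is the de~Donder--Weyl system for $H = \frac{1}{2} a^{ij}_{\mu\nu}\sigma^\mu_i\sigma^\nu_j + F$, its $1$-form $\phi^i_\mu\,\mathrm{d}\sigma^\mu_i + f_i\,\mathrm{d}u^i$ equals $\mathrm{d}H$ and is therefore closed (as in \autoref{cor:hamImpliesMult}); hence \autoref{lem:jump} applies on each $K \in \mathcal{T}_h$, and it suffices to verify the jump identity \eqref{eqn:msclJump}. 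Throughout, $\phi^i_\mu = a^{ij}_{\mu\nu}\sigma^\nu_j$, where $a^{ij}_{\mu\nu}$ denotes the (symmetric) inverse of the matrix $a = a^{\mu\nu}_{ij}$.

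The single feature distinguishing these fluxes from the LDG-H fluxes of \autoref{thm:ldgFlux} is that $\sigma$ has been replaced by the reconstructed flux $\overline{\sigma}$, with components $\overline{\sigma}^\mu_i \coloneqq a^{\mu\nu}_{ij}\partial_\nu u^j$. Because $a$ is constant on $K$ and $u \in [\mathcal{P}_r(K)]^n$, the components of $\overline{\sigma}$ lie in $\mathcal{P}_{r-1}(K)$, so $\overline{\sigma} \in \Sigma(K)$ for both \eqref{eqn:ip-h} and \eqref{eqn:ip-h-like}; the same is therefore true of the constitutive residual $\rho \coloneqq \overline{\sigma} - \sigma$. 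Since $\widehat{\Sigma}(\partial K) = [L^2(\partial K)]^{mn}$, the hypothesis of \autoref{thm:ldgFlux} holds, and its argument applies with $\overline{\sigma}$ in place of $\sigma$, giving $\int_{\partial K}[\mathrm{d}(\widehat{u}^i - u^i) \wedge \mathrm{d}(\widehat{\sigma}^\mu_i - \overline{\sigma}^\mu_i)]\,\mathrm{d}^{m-1}x_\mu = 0$ (the penalty term dropping out by symmetry of $\delta$ and antisymmetry of $\wedge$). Consequently, the desired identity \eqref{eqn:msclJump} is equivalent to the vanishing of the residual correction $\int_{\partial K}[\mathrm{d}(\widehat{u}^i - u^i) \wedge \mathrm{d}\rho^\mu_i]\,\mathrm{d}^{m-1}x_\mu$.

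I expect this residual correction to be the main obstacle, since---unlike in the LDG-H case---it cannot be resolved from the flux condition \eqref{eqn:numericalFlux} alone; the local solver \eqref{eqn:localSolverA} must enter. The plan is to integrate \eqref{eqn:localSolverA} by parts and rearrange it, using the inverse relation $\phi^i_\mu - \partial_\mu u^i = -a^{ij}_{\mu\nu}\rho^\nu_j$, into $\int_{\partial K}(\widehat{u}^i - u^i)w^\mu_i\,\mathrm{d}^{m-1}x_\mu = -\int_K a^{ij}_{\mu\nu}\rho^\nu_j w^\mu_i\,\mathrm{d}^m x$, valid for every $w \in \Sigma(K)$; as \eqref{eqn:localSolverA} is linear in $(u,\sigma,\widehat{u})$ for the semilinear system, first variations satisfy the same identity. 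Evaluating the residual correction on a pair of variations, with associated residual variations $\rho$ and $\rho'$, turns it into $\int_{\partial K}[(\widehat{v}^i - v^i)\rho^{\prime\mu}_i - (\widehat{v}^{\prime i} - v^{\prime i})\rho^\mu_i]\,\mathrm{d}^{m-1}x_\mu$. Because $\rho, \rho' \in \Sigma(K)$, I may then take $w = \rho'$ in the rearranged identity for the first variation and $w = \rho$ in that for the second; subtracting converts the boundary term into the difference of the bulk integrals $-\int_K a^{ij}_{\mu\nu}\rho^\nu_j\rho^{\prime\mu}_i\,\mathrm{d}^m x$ and $-\int_K a^{ij}_{\mu\nu}\rho^{\prime\nu}_j\rho^\mu_i\,\mathrm{d}^m x$, which cancel by the symmetry of $a^{-1}$. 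The crux is thus that the symmetry \emph{and} the constancy of $a$ (the latter ensuring $\rho \in \Sigma(K)$) together force the correction to vanish, so the flux formulation is multisymplectic.

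It remains to verify strong conservativity. On an internal facet $e = \partial K^+ \cap \partial K^-$, the flux condition gives $\widehat{\sigma}^\mu_i\rvert_{e^\pm} = a^{\mu\nu}_{ij}\partial_\nu u^j + \lambda\delta_{ij}(\widehat{u}^j - u^j)\mathbf{n}^\mu$; since $a$, $\lambda$, and $\mathbf{n}$ are constant on $e$, while $\partial_\nu u^j\rvert_e \in \mathcal{P}_{r-1}(e)$ and $\widehat{u}^j, u^j\rvert_e \in \mathcal{P}_r(e)$, each such normal trace lies in $[\mathcal{P}_r(e)]^{mn}$, so $\llbracket \widehat{\sigma} \rrbracket\rvert_e \in [\mathcal{P}_r(e)]^n$. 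As this holds on every $e \in \mathcal{E}^\circ_h$ and $\widehat{V} = [\mathcal{P}_r(\mathcal{E}_h)]^n$, the extension by zero of $\llbracket \widehat{\sigma} \rrbracket$ lies in $\widehat{V}_0$, so both methods are strongly conservative. \autoref{thm:strongMSCL} then yields strong multisymplecticity for the IP-H and ``IP-H-like'' methods.
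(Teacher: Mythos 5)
Your proof is correct, and its overall skeleton is the same as the paper's: reduce to the jump condition \eqref{eqn:msclJump} via \autoref{lem:jump}, dispose of the penalty term exactly as in \autoref{thm:ldgFlux}, use the local solver \eqref{eqn:localSolverA} together with the constancy and symmetry of $a$ to kill the remaining constitutive-residual term, and then conclude strong multisymplecticity from facet-wise strong conservativity and \autoref{thm:strongMSCL} (your conservativity argument is verbatim the paper's). Where you genuinely diverge is in the mechanics of the central cancellation. The paper derives two separate $1$-form identities---one by testing \eqref{eqn:localSolverA} against $\mathrm{d}\sigma$, one by testing against $\mathrm{d}(a\operatorname{grad}u)$ (this is where constancy of $a$ is invoked, to make $a\operatorname{grad}v$ an admissible test function)---takes exterior derivatives of each, and subtracts, using the symmetry of $a$ twice to discard terms like $a^{ij}_{\mu\nu}\,\mathrm{d}\sigma^\nu_j\wedge\mathrm{d}\sigma^\mu_i$. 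You instead rearrange the integrated-by-parts local solver into the single identity $\int_{\partial K}(\widehat{u}^i-u^i)w^\mu_i\,\mathrm{d}^{m-1}x_\mu=-\int_K a^{ij}_{\mu\nu}\rho^\nu_j w^\mu_i\,\mathrm{d}^m x$, pass explicitly to the linearized equations, and evaluate the wedge on a pair of variations, choosing $w=\rho'$ and $w=\rho$ so that the bulk terms cancel by the symmetry of $a^{-1}$ as a bilinear form. Your organization is more compact (one identity instead of two, one symmetry cancellation instead of two wedge-vanishing arguments), makes explicit where linearity of \eqref{eqn:localSolverA} and the hypothesis $\rho\in\Sigma(K)$ (hence constancy of $a$) enter, and replaces the paper's formal differential-form calculus with a concrete symmetric-bilinear-form argument; the paper's version, by working at the level of $1$-forms throughout, avoids ever naming the variations and stays closer in style to its Lemmas \ref{lem:multisymplecticClosed} and \ref{lem:jump}. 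Both are valid, and the ingredients are identical.
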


\begin{proof}
  To prove that the methods are multisymplectic, \autoref{lem:jump}
  states that it suffices to show that \eqref{eqn:msclJump} is
  satisfied on each $ K \in \mathcal{T} _h $. Observe that
  \begin{equation*}
    ( \widehat{ \sigma } _i ^\mu - \sigma _i ^\mu) \rvert _{ \partial K }  = \bigl[ ( a _{ i j } ^{ \mu \nu } \partial _\nu u ^j - \sigma _i ^\mu  ) + \lambda \delta _{ i j } ( \widehat{ u } ^j - u ^j ) \mathbf{n} ^\mu \bigr] \rvert _{ \partial K } .
  \end{equation*}
  We have previously seen that
  \begin{equation*}
    \int _{ \partial K } \lambda \delta _{ i j } \bigl[ \mathrm{d} ( \widehat{ u } ^i - u ^i )
    \wedge \mathrm{d} ( \widehat{ u } ^j - u ^j  )
    \bigr] \mathbf{n} ^\mu  \,\mathrm{d}^{m-1} x_\mu = 0 ,
  \end{equation*}
  by the symmetry of $ \delta $ and the antisymmetry of $ \wedge
  $. Therefore, it remains to show that the terms involving
  $ a _{ i j } ^{ \mu \nu } \partial _\nu u ^j - \sigma _i ^\mu $
  vanish as well.  Integrating \eqref{eqn:localSolverA} by parts gives
  \begin{align*}
    \int _{ \partial K } ( \widehat{ u } ^i - u ^i ) \tau _i ^\mu \,\mathrm{d}^{m-1} x_\mu
    &= \int _K ( \phi ^i _\mu  - \partial _\mu u ^i ) \tau _i ^\mu \,\mathrm{d}^m x \\
    &= \int _K ( a ^{ i j } _{ \mu \nu } \sigma _j ^\nu   - \partial _\mu u ^i ) \tau _i ^\mu \,\mathrm{d}^m x ,
  \end{align*}
  and since this holds for all $ \tau \in \Sigma (K) $, we can write
  \begin{equation*}
    \int _{ \partial K } \bigl[ ( \widehat{ u } ^i - u
    ^i ) \,\mathrm{d} \sigma _i ^\mu \bigr] \,\mathrm{d}^{m-1} x_\mu = \int _K \bigl[ ( a ^{ i j } _{ \mu \nu } \sigma _j ^\nu - \partial _\mu u ^i ) \,\mathrm{d}\sigma _i ^\mu \bigr] \,\mathrm{d}^m x .
  \end{equation*}
  Taking the exterior derivative of both sides yields
  \begin{align*}
    \int _{ \partial K } \bigl[ \mathrm{d} ( \widehat{ u } ^i - u
    ^i ) \wedge \mathrm{d} \sigma _i ^\mu \bigr] \,\mathrm{d}^{m-1} x_\mu
    &= \int _K \bigl[ a ^{ i j } _{ \mu \nu } \,\mathrm{d}\sigma _j ^\nu \wedge \mathrm{d} \sigma _i ^\mu - \mathrm{d} ( \partial _\mu u ^i ) \wedge \mathrm{d} \sigma _i ^\mu \bigr] \,\mathrm{d}^m x \\
    &= \int _K \bigl[ -  \mathrm{d} (  \partial _\mu u ^i ) \wedge \mathrm{d} \sigma _i ^\mu \bigr] \,\mathrm{d}^m x,
  \end{align*}
  where
  $ a ^{ i j } _{ \mu \nu } \,\mathrm{d}\sigma _j ^\nu \wedge
  \mathrm{d} \sigma _i ^\mu = 0 $ by the symmetry of $a$ and the
  antisymmetry of $ \wedge $.  On the other hand, taking
  $ \tau = a \operatorname{grad} v $ for $ v \in V (K) $, we have
  \begin{align*}
    \int _{ \partial K } ( \widehat{ u } ^i - u ^i ) \tau _i ^\mu \,\mathrm{d}^{m-1} x_\mu
    &= \int _K ( a ^{ i j } _{ \mu \nu } \sigma _j ^\nu - \partial _\mu u ^i ) a _{ i k } ^{ \mu \lambda } \partial _\lambda v ^k \,\mathrm{d}^m x \\
    &= \int _K ( \sigma _i ^\mu - a ^{ \mu \nu } _{ i j } \partial _\nu u ^j ) \partial _\mu v ^i \,\mathrm{d}^m x ,
  \end{align*}
  and since this holds for all $ v \in V (K) $, we can write
  \begin{equation*}
    \int _{ \partial K } \bigl[ ( \widehat{ u } ^i - u ^i ) \,\mathrm{d} ( a _{ i j } ^{ \mu \nu } \partial _\nu  u ^j ) \bigr] \,\mathrm{d}^{m-1} x_\mu = \int _K \bigl[ ( \sigma _i ^\mu - a _{ i j } ^{ \mu \nu } \partial _\nu u ^j ) \,\mathrm{d}  ( \partial _\mu u ^i ) \bigr] \,\mathrm{d}^m x .
  \end{equation*} 
  Note that we need $a$ to be constant on $K$ in order for
  $ \tau = a \operatorname{grad} v \in \bigl[ \mathcal{P} _{ r -1 }
  (K) \bigr] ^{ m n } \subset \Sigma (K) $ to be an admissible test
  function for both methods \eqref{eqn:ip-h}--\eqref{eqn:ip-h-like}.
  Taking the exterior derivative of both sides yields
  \begin{multline*}
    \int _{ \partial K } \bigl[ \mathrm{d} ( \widehat{ u } ^i - u ^i ) \wedge \mathrm{d} ( a _{ i j } ^{ \mu \nu } \partial _\nu u ^j) \bigr] \,\mathrm{d}^{m-1} x_\mu\\
    \begin{aligned}
      &= \int _K \bigl[ \mathrm{d} \sigma _i ^\mu \wedge \mathrm{d} ( \partial _\mu u ^i ) - a _{ i j } ^{ \mu \nu } \mathrm{d} ( \partial _\nu u ^j ) \wedge \mathrm{d} (\partial _\mu u ^i ) \bigr] \,\mathrm{d}^m x \\
      &= \int _K \bigl[ \mathrm{d} \sigma _i ^\mu \wedge \mathrm{d} ( \partial _\mu  u ^i ) \bigr] \,\mathrm{d}^m x \\
      &= \int _{ \partial K } \bigl[ \mathrm{d} ( \widehat{ u } ^i - u
      ^i ) \wedge \mathrm{d} \sigma _i ^\mu \bigr] \,\mathrm{d}^{m-1}
      x_\mu,
  \end{aligned}
\end{multline*}
where again we have used the symmetry of $a$ and the antisymmetry of
$ \wedge $. Rearranging this last equality yields
\begin{equation*}
  \int _{ \partial K } \bigl[ \mathrm{d} ( \widehat{ u } ^i - u
  ^i ) \wedge \mathrm{d} ( a _{ i j } ^{ \mu \nu
  } \partial _\nu u ^j - \sigma _i ^\mu ) \bigr] \,\mathrm{d}^{m-1} x_\mu = 0 ,
\end{equation*}
which is precisely \eqref{eqn:msclJump}. Hence, the IP-H method is
multisymplectic.

To show strong multisymplecticity, let
$ e = \partial K ^+ \cap \partial K ^- $ be an internal facet. Then
\begin{equation*}
  \widehat{ \sigma } \rvert _{ e^\pm } = \bigl[ a \operatorname{grad} u + \lambda ( \widehat{ u } - u ) \mathbf{n} \bigr] \rvert _{ e ^\pm } \in \bigl[ \mathcal{P} _r (e) \bigr] ^{ m n } ,
\end{equation*}
so
$ \llbracket \widehat{ \sigma } \rrbracket \rvert _e \in \bigl[
\mathcal{P} _r (e) \bigr] ^n $. (Here, we use that
$ a \rvert _{ e ^\pm } $ and $ \lambda \rvert _{ e ^\pm } $ are
constant.) Hence, $ \llbracket \widehat{ \sigma } \rrbracket $ may be
extended by zero to an element of $ \widehat{ V } _0 $, so both
methods are strongly conservative and thus, by
\autoref{thm:strongMSCL}, strongly multisymplectic.
\end{proof}

\section{Conclusion}

We have generalized the flux formulation and HDG framework of
\citet{CoGoLa2009} to the much larger family of canonical systems of
PDEs \eqref{eqn:pdeSystem}, which includes nonlinear systems. Within
this framework, we have established the multisymplecticity of several
HDG methods, when such methods are applied to canonical Hamiltonian
systems \eqref{eqn:ddw}. These methods include ``hybridized'' versions
of several widely-used classes of finite element methods, suggesting
that---when multisymplectic structure preservation is desired---these
general purpose methods may be used instead of the specialized methods
constructed in previous work, which are often limited in their order
of accuracy and/or use on unstructured meshes.

It is perhaps not surprising that so many finite element methods are
multisymplectic. Indeed, like the Ritz--Galerkin method, the
multisymplectic conservation law is intimately related to variational
principles
(cf.~\citet{LaSnTu1975,KiTu1979,MaPaSh1998,GoIsMaMo2004,GoIsMa2004,VaLiLe2013}). However,
any construction of multisymplectic finite element methods must
address two difficulties: first, that the multisymplectic conservation
law holds for smooth solutions, while finite element spaces are
nonsmooth; and second, that finite element basis functions can have
support on several elements, posing an obstacle to expressing a
localized, per-element conservation law. The flux formulation of
\autoref{sec:flux} provides a solution to both of these
difficulties. Hybridization (i.e., introducing separate spaces of
boundary traces and fluxes) provides a way to express the
multisymplectic conservation law on individual elements, while the
exact flux formulation of \autoref{sec:exactSolutions} provides a
bridge between the smooth and non-smooth cases (particularly
\autoref{thm:exactSolution} and \autoref{cor:exactMSCL}).

There are two final points we wish to emphasize about these results,
by comparison to the previous work discussed in \autoref{sec:intro}.
\begin{enumerate}
\item Multisymplectic HDG methods may be applied to systems of the
  form \eqref{eqn:pdeSystem}, whether or not the user is aware of any
  canonical multisymplectic/Hamiltonian structure. Yet, if such a
  structure is present, it will automatically be preserved. This is
  analogous to the case for certain classes of symplectic integrators
  for ODEs (e.g., symplectic partitioned Runge--Kutta methods), but
  contrasts with previously-studied multisymplectic methods for PDEs
  on unstructured meshes (e.g., Lagrangian variational methods).

\item Many of the previously-constructed multisymplectic methods
  merely satisfy a ``discrete version'' of the multisymplectic
  conservation law, e.g., a finite-difference version of
  \eqref{eqn:msclDifferential} on a lattice. This is true even for the
  previous work on multisymplectic finite element methods
  \citep{GuJiLiWu2001,ZhBaLiWu2003,Chen2008}, which are only
  ``multisymplectic'' in a finite-difference sense on the lattice of
  degrees of freedom. Moreover, this discrete multisymplectic
  conservation law may differ from method to method, depending on how
  the divergence operator is discretized.

  By contrast, the multisymplectic conservation law
  \eqref{eqn:msclHDG} satisfied by these HDG methods is \emph{exactly}
  the integral multisymplectic conservation law
  \eqref{eqn:msclIntegral}, restricted to the numerical traces and
  fluxes on $ \partial K $ for each element $ K \in \mathcal{T} _h $,
  while \eqref{eqn:strongMSCL} extends this to arbitrary unions of
  elements.  Furthermore, this multisymplectic conservation law does
  not differ from method to method: it has precisely the same form for
  weak solutions, in the exact flux formulation of
  \autoref{sec:exactSolutions}, as it does for each of the HDG methods
  of \autoref{sec:hdg}.
\end{enumerate}

One direction for future work is the application of multisymplectic
HDG methods to Hamiltonian time-evolution PDEs. There are two natural
ways in which such work might proceed. First, a multisymplectic HDG
method might be used to semidiscretize the PDE in space, resulting in
a finite-dimensional system of ODEs.  \citet{SaCiNgPeCo2017} have
recently shown that, when an {LDG\nobreakdash-H} method is used to
semidiscretize the acoustic wave equation, the resulting system of
ODEs is Hamiltonian, and one may then apply a symplectic integrator in
time. Second, one might consider the application of spacetime HDG
methods, as in \citet{RhCo2012,RhCo2013,GrMo2014}. In this approach,
one would simultaneously discretize space and time by applying the
flux formulation of \autoref{sec:flux} to the case where $U$ is a
spacetime domain and $ \mathcal{T} _h $ a decomposition into spacetime
elements $ K \in \mathcal{T} _h $. The results of \autoref{sec:flux}
are formulated in sufficient generality to include such methods, so
one might apply them to investigate the multisymplecticity of specific
spacetime HDG methods.

\subsection*{Acknowledgments}

This research was supported in part by the Marsden Fund of the Royal
Society of New Zealand and by the Simons Foundation (award \#279968 to
Ari Stern). We also wish to thank the anonymous referees for their
helpful comments and suggestions.

\end{document}